\definecolor{darkmode}{RGB}{32, 31, 30}
\definecolor{gblue}{RGB}{190,209,210}
\definecolor{oblue}{RGB}{114, 160, 193}
\def\VR{\kern-\arraycolsep\strut\vrule &\kern-\arraycolsep}
\def\vr{\kern-\arraycolsep & \kern-\arraycolsep}
\newtheorem{theorem}{Theorem}
\newtheorem{lemma}[theorem]{Lemma}
\newtheorem{prop}[theorem]{Proposition}
\newtheorem{corollary}[theorem]{Corollary}
\theoremstyle{definition}
\newtheorem{definition}[theorem]{Definition}
\newtheorem{rmk}[theorem]{Remark}
\newenvironment{remark}[1][]{\begin{rmk}[#1]\pushQED{\qed}}{\popQED \end{rmk}}
\newtheorem{qu}[theorem]{Question}
\newtheorem*{rmknonum}{Remark}
\newtheorem{obs}[theorem]{Observation}
\newtheorem{ex}[theorem]{Example}
\newcommand{\rep}{\operatorname{rep}}
\newcommand{\BL}{\mathcal{BL}}
\newcommand{\gr}{\operatorname{gr}}
\newcommand{\GL}{\operatorname{GL}}
\newcommand{\Col}{\operatorname{Col}}
\newcommand{\ZZ}{\mathbb Z}
\newcommand{\CC}{\mathbb C}
\newcommand{\RR}{\mathbb R}
\newcommand{\NN}{\mathbb N}
\newcommand{\QQ}{\mathbb Q}
\newcommand{\HH}{\mathbb H}
\newcommand{\V}{V}
\newcommand{\Id}{\mathbf{I}}
\newcommand{\rel}{\operatorname{relint}}
\newcommand{\Span}{\operatorname{Span}}
\newcommand{\Tr}{\operatorname{Tr}}
\newcommand{\ddim}{\operatorname{\mathbf{dim}}}
\newcommand{\cc}{\operatorname{\mathbf{c}}}
\newcommand{\dd}{\operatorname{\mathbf{d}}}
\newcommand{\ff}{\operatorname{\mathbf{f}}}
\newcommand{\x}{\mathbf{x}}
\newcommand{\G}{\mathcal{G}}
\newcommand{\Q}{\mathcal{Q}}
\newcommand{\ar}{\mathcal{A}}
\newcommand{\s}{\mathcal{A}}
\newcommand{\F}{\mathcal{F}}
\newcommand{\W}{\mathcal{W}}
\newcommand{\Z}{\mathcal{Z}}
\newcommand{\Eta}{\mathrm{H}}
\newcommand*\ww{\widetilde{W}}
\newcommand*\WW{\widetilde{\W}}
\newcommand{\dist}{\operatorname{dist}}
\newcommand{\capa}{\mathbf{cap}}
\newcommand{\module}{\operatorname{mod}}
\newcommand\restr[2]{{
  \left.\kern-\nulldelimiterspace 
  #1 
  \vphantom{\big|} 
  \right|_{#2} 
  }}
\begin{document}

\title{A quiver invariant theoretic approach to Radial Isotropy and the Paulsen Problem for matrix frames}
\author{Calin Chindris}
\address{University of Missouri-Columbia, Mathematics Department, Columbia, MO, USA}
\email[Calin Chindris]{chindrisc@missouri.edu}

\author{Jasim Ismaeel}
\address{University of Missouri-Columbia, Mathematics Department, Columbia, MO, USA}
\email[Jasim Ismaeel]{jmid9p@mail.missouri.edu}

\date{\today}
\bibliographystyle{amsalpha}
\subjclass[2010]{16G20, 13A50, 14L24}
\keywords{Critical quiver representations, frames, orbit polytopes, Parseval matrix frames, radial isotropic matrix frames, semi-stable frames, geometric Brascamp-Lieb quiver data}

\begin{abstract} In this paper, we view matrix frames as representations of quivers and study them within the general framework of quiver invariant theory. We are thus led to consider the large class of semi-stable matrix frames. Within this class, we are particularly interested in radial isotropic and Parseval matrix frames. 

Using methods from quiver invariant theory \cite{ChiDer-2019}, we first prove a far reaching generalization of Barthe's Radial Isotropy Theorem \cite{Bar-1998} to matrix frames (see Theorems \ref{RIF-thm-1}{(3)} and \ref{quiver-RadIso-thm}). With this tool at our disposal, we provide a quiver invariant theoretic approach to the Paulsen problem for matrix frames. We show in Theorem \ref{Paulsen-MF-thm} that for any given $\varepsilon$-nearly equal-norm Parseval frame $\F$ of $n$ matrices with $d$ rows there exists an equal-norm Parseval frame $\W$ of $n$ matrices with $d$ rows such that $\dist^2(\F,\W)\leq 46 \varepsilon d^2$. 
\end{abstract}

\maketitle
\setcounter{tocdepth}{1}

\section{Introduction} 

\subsection{Motivation} Our motivation in this paper goes back to the Paulsen problem which asks to determine how close  an $\varepsilon$-nearly equal-norm Parseval frame is to an equal-norm Parseval frame. The significance of this problem stems from the fact that, on the one hand, there exist algorithms for constructing $\varepsilon$-nearly equal-norm Parseval frames and on the other hand, there is a wide range of applications of equal-norm Parseval frames. Given the high level of difficulty in constructing equal-norm Parseval frames, one would like to know how far a given $\varepsilon$-nearly equal-norm Parseval frame is from the set of all equal-norm Parseval frames.

A systematic study of the Paulsen problem has been undertaken by Casazza and his collaborators in \cite{BodCas-2010, CasFicMix-2012, Cas-2013, CahCas-2013}. The first upper bound on the squared distance between an $\varepsilon$-nearly equal-norm Parseval frame and the set of all equal-norm Parseval frames that does not depend on the number of vectors of the frames involved was found by Kwok, Lau, Lee, and Ramachandran in \cite{KwoLauLeeRam2018}. Later on, Hamilton and Moitra \cite{HamMoi-2018} have found a significantly shorter solution to the Paulsen problem. One of the key ingredients in Hamilton-Moitra's approach is a theorem of Barthe \cite{Bar-1998} on vectors in radial isotropic position (see also \cite{For-2002}). The use of such vectors can be traced back to Fritz John's seminal work \cite{John-1948} which led to the notion of the John ellipsoid of a convex body. Since then radial isotropy for vectors has found applications to numerous other areas such as the complexity of unbounded error probabilistic communication protocols \cite{For-2002}; algorithmic and optimization aspects of Brascamp-Lieb inequalities \cite{BenCarChrTao-2008}; superquadratic lower bounds for $3$-query correctable codes \cite{DviSarWig-2017}; and algorithmic aspects of point location in high-dimensional arrangements of hyperplanes \cite{KanLovMor-2018}. For an excellent account on vectors in radial isotropic position, we refer the reader to the recent work of Artstein-Avidan, Kaplan, and Sharir \cite{Art-AviKapHai-2020}. 

Going beyond the classical set-up of frames, fusion frames and, more generally, matrix frames are the main tools for applications to distributed sensing \cite{KutPezCalLiu-2009, PezKutCal-2008}, parallel processing \cite{BjoMan-1991}, and packet encoding \cite{Bod-2007}, just to name a few. It is therefore an important task to establish an analog of the Paulsen problem for \emph{matrix frames} and design effective ways for the construction of $\varepsilon$-nearly equal-norm Parseval matrix frames. In fact, in this paper, we develop a quiver invariant theoretic approach to radial isotropy and applications to the Paulsen problem for such frames.

Let $d,n, d_1, \ldots, d_n$ be fixed positive integers with $n>d$, and let $\F=\{X_1, \ldots, X_n\}$ be a collection of matrices with $X_i \in \RR^{d \times d_i}$ for all $i \in [n]$. We encode $\F$ as a representation of the bipartite quiver $\Q$ as follows
\begin{equation}\label{quiver-eqn-1}
\Q:~
\vcenter{\hbox{  
\begin{tikzpicture}[point/.style={shape=circle, fill=black, scale=.3pt,outer sep=3pt},>=latex]
   \node[point,label={left:$0$}] (1) at (-2.5,0) {};
   \node[point,label={right:$1$}] (2) at (0,2.2) {};
   \node[point,label={right:$i$}] (3) at (1,0) {};
   \node[point,label={right:$n$}] (4) at (0,-2.2) {};
  
   \draw[dotted] (-0.5,.1)--(-0.5,-.1);
   \draw[dotted] (-1.2,1.2)--(-1.2,1);
   \draw[dotted] (-1.2,-1.2)--(-1.2,-1);

   \path[->]
   (1) edge [bend left=15] node[pos=.75, above] {$a_{1,1}$} (2)
   (1) edge [bend right=15] node[pos=.75, right] {$a_{1, d_1}$} (2)
   (1) edge [bend left=13] node[pos=.8, above] {$a_{i,1}$} (3)   
   (1) edge [bend right=13] node[pos=.8, below] {$a_{i,d_i}$} (3) 
   
   (1) edge [bend left=15]  node[pos=.7, right] {$a_{n,1}$} (4)
   (1) edge [bend right=15] node[pos=.75, below] {$a_{n,d_n}$} (4);
\end{tikzpicture} 
}}
\hspace{30pt}
\V_{\F}:~
\vcenter{\hbox{  
\begin{tikzpicture}[point/.style={shape=circle, fill=black, scale=.3pt,outer sep=3pt},>=latex]
   \node[point,label={left:$\RR^d$}] (1) at (-2.5,0) {};
   \node[point,label={right:$\RR$}] (2) at (0,2.2) {};
   \node[point,label={right:$\RR$}] (3) at (1,0) {};
   \node[point,label={right:$\RR$}] (4) at (0,-2.2) {};
  
   \draw[dotted] (-0.5,.1)--(-0.5,-.1);
   \draw[dotted] (-1.2,1.2)--(-1.2,1);
   \draw[dotted] (-1.2,-1.2)--(-1.2,-1);

   \path[->]
   (1) edge [bend left=15] node[pos=.75, above] {$\x^T_{1,1}$} (2)
   (1) edge [bend right=15] node[pos=.70, right] {$\x^T_{1, d_1}$} (2)
   (1) edge [bend left=13] node[pos=.9, above] {$\x^T_{i,1}$} (3)   
   (1) edge [bend right=13] node[pos=.9, below] {$\x^T_{i, d_i}$} (3) 
   
   (1) edge [bend left=15]  node[pos=.7, right] {$\x^T_{n,1}$} (4)
   (1) edge [bend right=15] node[pos=.75, below] {$\x^T_{n, d_n}$} (4);
\end{tikzpicture} 
}}
\end{equation}
where $\x_{i,\ell} \in \RR^d$ is the $\ell$th column of the matrix $X_i$ for all $i \in [n]$ and $\ell\in [d_i]$. The quiver $\Q$ is the bipartite directed graph with one source vertex, labeled by $0$, and $n$ sink vertices labeled by $1, \ldots, n$; moreover, there are precisely $d_i$ arrows from $0$ to $i$ for each $i \in [n]$. Guided by quiver invariant theoretic considerations, we associate to $\F$ its \emph{orbit polytope} defined by
\begin{equation*}
K_{\F}:=\left\{ \cc=(c_1, \ldots, c_n) \in \RR^n_{\geq 0}  \;\middle|\;
\begin{array}{l}
c_1+\ldots +c_n=d, \text{~and~}\\
\sum_{i \in I} c_i \leq \dim (\sum_{i \in I} \Col(X_i)), \forall I \subseteq [n]
\end{array} \right\},
\end{equation*}
where $\Col(X_i)$ denotes the subspace of $\RR^d$ spanned by the columns of $X_i$ for each $i \in [n]$. (When $d_1=\ldots=d_n=1$, $K_{\F}$ is the basis polytope associated to the set of vectors $\F$.)

We say that $\F$ is a \emph{matrix frame}\footnote{A matrix frame without any other structure is precisely a frame. Indeed, the columns of the matrices of an MF form a frame and, conversely, any frame can be converted into an MF by partitioning the set of frame vectors into subsets of appropriate size. It is only after adding additional structure to an MF (e.g. semi-stable MFs, PMFs, RIFs) that the two notions begin to diverge.} (``MF") if
\begin{equation*}\label{eqn-mf}
\sum_{i=1}^n X_i  X_i^T \text{~is a positive definite matrix}.
\end{equation*}

Now let $\cc=(c_1, \ldots, c_n) \in \QQ^n_{> 0}$ be an $n$-tuple of positive rational\footnote{The assumption that the weights are positive rational numbers allows us to define the integral stability weight $\sigma_{\cc}$ (see equation (\ref{eqn-induced-wt})) which we need in order to run the quiver invariant theory machinery.} weights. We call $(\F, \cc)$ a \emph{weighted} MF if the the weighted sum $\sum_{i=1}^n c_i X_i  X_i^T$ is a positive definite matrix. This can be easily seen to be equivalent to $\F$ being an MF since the weights $c_i$ are positive.

The datum $(\F, \cc)$ is said to be a \emph{semi-stable matrix frame} if $\cc \in K_\F$. We point out that there exists a deterministic polynomial time algorithm for checking whether $(\F, \cc)$ is a semi-stable matrix frame (see Remark \ref{semi-stab-MF-polytime} for more details).

We say that $(\F, \cc)$ is a weighted \emph{Parseval matrix frame} (``PMF") if
\begin{equation*}\label{eqn-pmf}
\sum_{i=1}^n c_iX_i  X_i^T=I_d \text{~and~} \lVert X_i\rVert_F^2=1, \forall i\in[n].
\end{equation*}
Working with the columns of the matrices involved, any weighted PMF can be viewed as a weighted Parseval frame with the property that the vectors of the frame can be partitioned into $n$ groups so that the sum of the squared norms of the vectors within each group is equal to one. A closely related notion is that of a radial isotropic frame. We call $(\F, \cc)$ a \emph{radial isotropic matrix frame} (``RIF") if
\begin{equation} \label{eqn-rif}
\sum_{i=1}^n c_i\frac{X_i  X_i^T}{\lVert X_i\rVert^2_F}=I_d.
\end{equation}
We say that $(\F, \cc)$ can be \emph{transformed into a RIF} if there exists an invertible matrix $A \in \GL(d)$ such that $(\{A X_i \}_{i \in [n]}, \cc)$ is a RIF.  When $d_1=\ldots=d_n=1$, $(\ref{eqn-rif})$ recovers the concept of vectors in (or that can brought into) radial isotropic position. In Remark \ref{rmk-classical-vs-matrix-RadIso}, we point out some of the difficulties that one encounters when trying to use classical radial isotropy (i.e. the case when $d_1=\ldots=d_n=1$) to transform a matrix frame into a RIF. This remark also points to the need for new techniques, like the quiver invariant theoretic techniques developed in this paper, in order to deal with the general case of matrix frames.

We have the following basic dictionary translating between key concepts in frame theory and quiver invariant theory:
\begin{table}[h!]
  \begin{center}
    \label{tab:table1}
    \begin{tabular}{|l||l|} 
    \hline
      \emph{\textbf{Frame theory}} & \emph{\textbf{Quiver invariant theory}}\\
      \hline
      \hline
      Frames & Quiver representations\\
      \hline
      Orbit polytopes of matrix frames & Orbit cones of quiver representations\\
      \hline
      Semi-stable matrix frames &  Semi-stable quiver representations\\
      \hline
      Parseval matrix frames  &  Geometric Brascamp-Lieb quiver data\\
      \hline
      Frames that can be transformed into a RIF & Polystable quiver representations\\
      \hline
    \end{tabular}
  \end{center}
\end{table}

\noindent
Moreover, any PMF is a  semi-stable MF, and any semi-stable MF is a matrix frame (see Corollary \ref{pmf-semi-stab-MF-coro} for details).

\subsection{Our results} Let $\F=\{X_1, \ldots, X_n\}$ be a matrix frame with $X_i \in \RR^{d \times d_i}$, $i \in [n]$. Let $\Phi_\F:\RR^n\longrightarrow \RR$ be the function defined  by
$$ 
\Phi_\F(t)=\log\left(\det\left(\sum_{i=1}^n e^{t_i}X_iX_i^T\right)\right) \text{ where }t=(t_i)_{i=1}^n \in \RR^n,
$$ 
and let us consider $f_\F:\RR^n \longrightarrow\RR\cup\{-\infty\}$ defined by 
$$
f_\F(\cc)=\inf_{t\in\RR^n}\{\Phi_\F(t)-\langle t,\cc\rangle\},$$ 
i.e. $f_\F=-\Phi_\F^*$ where $\Phi_\F^*$ is the Legendre dual of $\Phi_\F$. The function $f_\F$ can also be obtained from the capacity of the quiver datum $(V_\F, \cc)$ (see Remark \ref{cap-matrix-frame-def} for details).  In what follows, $\rel(K)$ denotes the relative interior of a polytope $K$.

We are now ready to state our first result.

\begin{theorem} \label{RIF-thm-1} Let $\F=\{X_1, \ldots, X_n\}$ be a matrix frame with $X_i \in \RR^{d \times d_i}$, $i \in [n]$, and let $\cc=(c_1, \ldots, c_n) \in \QQ_{>0}^n$ be positive rational weights such that  $\sum_{i=1}^n c_i=d$. Then the following statements hold.
\begin{enumerate}
\smallskip
\item The domain of $\restr{f_\F}{\QQ_{>0}^n}$ is precisely $K_\F \cap \QQ_{>0}^n$.

\smallskip
\item (\textbf{The RIF-degeneration of a semi-stable frame}) Assume that $(\F, \cc)$ is a semi-stable frame. Then $(\F, \cc)$ degenerates (in the sense of quiver representations) to a semi-stable frame $(\widetilde{\F}, \cc)$ such that $(\widetilde{\F}, \cc)$ can be transformed into a RIF.

\smallskip
\item (\textbf{Matrix Radial Isotropy}) Assume that $\V_\F$ is a locally semi-simple representation. Then the following statements are equivalent:
\begin{enumerate}[(i)]
\item $\cc \in \rel(K_\F)$;
\item $(\F, \cc)$ can be transformed into a RIF;
\item $f_\F(\cc)$ is finite and is attained at some point of $\RR^n$. 
\end{enumerate}
\end{enumerate}
\end{theorem}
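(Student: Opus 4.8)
The proof naturally organizes along the three parts: part (1) is the backbone, part (3) follows from it together with standard convex duality, and part (2) is the only piece that genuinely requires the quiver-invariant-theoretic input. For (1), the plan is to first identify $\exp f_\F(\cc)$ with the capacity of the quiver datum $(\V_\F,\cc)$, as in Remark~\ref{cap-matrix-frame-def}: substituting $s_i=e^{t_i}$ gives $\exp f_\F(\cc)=\inf_{s\in\RR^n_{>0}}\det\!\big(\sum_i s_i X_iX_i^T\big)\big/\prod_i s_i^{c_i}$, a normalization of $\capa(\V_\F,\sigma_\cc)$. Then I would invoke the equivalence from \cite{ChiDer-2019} between positivity of the capacity and $\sigma_\cc$-semi-stability of $\V_\F$, and finally check, via the Hilbert--Mumford numerical criterion for quiver semi-stability, that $\sigma_\cc$-semi-stability of $\V_\F$ is exactly the system defining $K_\F$: a subrepresentation of $\V_\F$ amounts to a subspace $W\subseteq\RR^d$ at the source together with $\RR$ or $0$ at each sink $i$, with $\RR$ forced precisely when $W\not\perp\Col(X_i)$, and optimizing $\sigma_\cc$ over these subrepresentations produces the inequalities $\sum_{i\in I}c_i\le\dim(\sum_{i\in I}\Col(X_i))$. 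Homogeneity of $\det$ shows the infimum is $-\infty$ unless $\sum_i c_i=d$, which is why that normalization is imposed; restricting to $\QQ^n_{>0}$ gives (1).

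For part (3), note first that since $\F$ is a matrix frame and all $e^{t_i}>0$, the matrix $S(t):=\sum_i e^{t_i}X_iX_i^T$ is positive definite for every $t$, so $\Phi_\F$ is finite and smooth on $\RR^n$; by Cauchy--Binet, $\det S(t)=\sum_{S}\det(X_S)^2\, e^{\langle w_S,t\rangle}$, the sum over $d$-element linearly independent subsets $S$ of the columns appearing in $\F$ (with $X_S$ collecting those columns), where $w_S\in\RR^n_{\ge 0}$ counts how many chosen columns come from each $X_i$. Hence $\Phi_\F$ is a log-sum-exp function, in particular convex, and its Newton polytope $\operatorname{conv}\{w_S\}$ equals $K_\F$ by the standard description of the polymatroid obtained by grouping the columns of a linear matroid. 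Now for (ii)$\Leftrightarrow$(iii): if $f_\F(\cc)$ is attained at $t^\ast$ then $\nabla\Phi_\F(t^\ast)=\cc$, and since $\partial_i\Phi_\F(t)=e^{t_i}\tr(S(t)^{-1}X_iX_i^T)$, putting $A:=S(t^\ast)^{-1/2}$ gives $e^{t^\ast_i}\|AX_i\|_F^2=c_i$; conjugating $I_d=A\,S(t^\ast)\,A=\sum_i e^{t^\ast_i}(AX_i)(AX_i)^T$ via these relations turns it into $\sum_i c_i\,(AX_i)(AX_i)^T/\|AX_i\|_F^2=I_d$, so $(\{AX_i\},\cc)$ is a RIF; conversely, given such an $A$, setting $e^{t_i}:=c_i/\|AX_i\|_F^2$ yields $S(t)=(A^TA)^{-1}$ and hence $\nabla\Phi_\F(t)=\cc$, so $t$ is a global minimizer by convexity and $f_\F(\cc)$ is finite and attained. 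For (i)$\Leftrightarrow$(iii): the infimum is attained iff $\cc\in\operatorname{range}(\nabla\Phi_\F)$, and $\operatorname{range}(\nabla\Phi_\F)=\rel(K_\F)$ — the inclusion $\subseteq$ because $\nabla\Phi_\F(t)$ is a strictly positive convex combination of the $w_S$, hence lies in $\rel\operatorname{conv}\{w_S\}=\rel(K_\F)$; the inclusion $\supseteq$ because for $\cc\in\rel(K_\F)$ the recession function $u\mapsto \max_S\langle w_S,u\rangle-\langle\cc,u\rangle$ of $\Phi_\F(\cdot)-\langle\cdot,\cc\rangle$ is positive off the lineality space (on which this objective is constant, using $\sum_i c_i=d$), so the objective is coercive modulo lineality and attains its minimum. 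This route does not use that $\V_\F$ be locally semi-simple; that hypothesis is what allows one to alternatively read (i)$\Rightarrow$(ii) off part (2), by identifying (ii) with $\sigma_\cc$-polystability of $\V_\F$.

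For part (2): if $(\F,\cc)$ is semi-stable then by (1) $\V_\F$ is $\sigma_\cc$-semi-stable, and by the Kempf--Ness/Hilbert--Mumford theory of quiver GIT (\cite{ChiDer-2019}) the orbit closure $\overline{\GL(\mathbf d)\cdot \V_\F}$ contains a unique closed orbit meeting the $\sigma_\cc$-semi-stable locus; any representation $\widetilde V$ on it is $\sigma_\cc$-polystable of the same dimension vector, hence is $\V_{\widetilde\F}$ for a matrix frame $\widetilde\F$, and $(\F,\cc)$ degenerates to $(\widetilde\F,\cc)$. It then remains to show that a $\sigma_\cc$-polystable frame can be transformed into a RIF; this is the Kempf--Ness theorem for the action at hand — a closed orbit contains a critical point of the Kempf--Ness function, and vanishing of the associated moment map becomes, after normalizing the scalars at the $n$ sinks, exactly equation~(\ref{eqn-rif}).

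The main obstacle is part (2): making precise the "polystable degeneration" and carrying out the Kempf--Ness identification of the moment-map-zero locus with (\ref{eqn-rif}), in particular tracking how the scalar gauge group at the $n$ sinks interacts with the normalizations $\|X_i\|_F^2$ in the denominators. The identification $\exp f_\F(\cc)=\capa(\V_\F,\cc)$ and the translation of $\sigma_\cc$-semi-stability into membership in $K_\F$ in part (1) also require some care, but are by now routine in this circle of ideas.
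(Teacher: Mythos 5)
Your part (1) is essentially the paper's proof: identify $e^{f_\F(\cc)}$ with (a normalization of) $\capa(V_\F,\cc)$, invoke Theorem~\ref{Calin-Harm-thm-cap}(1) to turn positivity of the capacity into $\sigma_\cc$-semi-stability, and unwind semi-stability of $V_\F$ into the inequalities defining $K_\F$ (this last step is Lemma~\ref{orbit-poly-tuples-lemma}). Your part (2) is also the paper's argument in slightly different clothing: the paper produces the degeneration from a Jordan--H\"older filtration of $V_\F$ in the abelian category of $\cc$-semi-stable representations and a one-parameter subgroup whose limit is the associated graded, while you appeal directly to the unique closed $\GL(\dd)_{\sigma_\cc}$-orbit in $\overline{\GL(\dd)_{\sigma_\cc}V_\F}$; these are the same picture (King/Kempf--Ness for quivers). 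Either way the punchline is the same: the limit is $\cc$-polystable, hence by Theorem~\ref{Calin-Harm-thm-cap}(2) it can be transformed into a geometric BL quiver datum, i.e., a RIF.

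Where you genuinely diverge is part (3). The paper proves (i)$\Leftrightarrow$(ii) by invoking the Quiver Radial Isotropy Theorem~\ref{quiver-RadIso-thm} (which requires the locally-semi-simple hypothesis and a complexification argument via \cite{HosSch2017}), and (ii)$\Leftrightarrow$(iii) by another appeal to Theorem~\ref{Calin-Harm-thm-cap}(2). You instead give a direct convex-analysis argument: $\Phi_\F$ is a log-sum-exp function whose exponent vectors $w_S$ come from Cauchy--Binet (this is exactly the computation the paper carries out separately in Lemma~\ref{lemma1-CARI}), $\nabla\Phi_\F$ maps onto the relative interior of the Newton polytope, and the Newton polytope equals $K_\F$. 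Combined with the explicit check that $\nabla\Phi_\F(t^\ast)=\cc$ is equivalent to $Q(t^\ast)^{-1/2}$ effecting a RIF (again, precisely the content of Theorem~\ref{RIF-thm-2}), this gives all three equivalences. Your route is more elementary and more self-contained, and---as you correctly observe---it does not use local semi-simplicity at all. That hypothesis appears in the paper because the authors route (i)$\Leftrightarrow$(ii) through the general Quiver Radial Isotropy Theorem, which is stated for arbitrary bipartite quivers and does need it; for the specific star quiver and dimension vector $(d,1,\ldots,1)$ arising from matrix frames, your argument shows the hypothesis is superfluous. The one place where you should be careful is the identity $\operatorname{conv}\{w_S:\Delta_S\neq0\}=K_\F$: the inclusion $\subseteq$ is immediate, but $\supseteq$ requires both the integrality of the polymatroid base polytope (Edmonds) and the fact that each integral base $\cc$ of the grouped polymatroid is realized as $w_S$ for some column basis $S$ (matroid intersection / Rado). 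You wave at this as ``the standard description of the polymatroid obtained by grouping the columns of a linear matroid,'' which is fair, but it is doing real work and is, in disguise, what the paper's Lemma~\ref{orbit-poly-tuples-lemma} establishes by a quiver-theoretic argument. Net comparison: the paper's approach buys generality (Theorem~\ref{quiver-RadIso-thm} applies to arbitrary quivers and feeds into the degeneration statement in part (2)); your approach buys a shorter, hypothesis-free proof of part (3) at the cost of importing a matroid-theoretic lemma.
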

\noindent
We point out that a sufficient condition for $\V_\F$ to be a locally semi-simple representation is that any $d$ of the $N$ columns of the matrix $[X_1 | \ldots | X_n] \in \RR^{d \times N}$ form a basis for $\RR^d$ where $N=d_1+\ldots+d_n$ (see Lemma \ref{ex-gen-locally-semi-simple} for details). Such matrices are also known as full spark in the compressed sensing literature (see for example \cite{AleCahMix-2012}). For a more general version of Theorem \ref{RIF-thm-1}{(3)}, see the Quiver Radial Isotropy Theorem \ref{quiver-RadIso-thm}.

\bigskip
We now come to the Paulsen problem for matrix frames. We say that $\F=\{X_1, \ldots, X_n\}$ with $X_i \in \RR^{d \times d_i}$, $i \in [n]$, is an \emph{equal-norm PMF} if $\left(\left\{\sqrt{\frac{n}{d}}X_i\right\}_{i\in[n]}, \cc=\left(\frac{d}{n}\right)_{i \in [n]} \right)$ is a weighted PMF,  i.e.
$$\sum_{i=1}^n X_i  X_i^T=I_d \text{~and~} \lVert X_i\rVert^2_F=\frac{d}{n}, \forall i\in[n].$$

\smallskip
\noindent
We point out that when $d_1=\ldots=d_n$, Lemma 12 in \cite{Bod-2007} states that among all weighted matrix frames $(\F, \cc)$ with $\left(\left\{c_i^{-{1 \over 2}}X_i\right\}_{i\in[n]}, \cc \right)$ a weighted PMF, the ones most robust to a single erasure, i.e., losing $X_i^Tu$ of the signal $u$ we are analyzing for some $i \in [n]$, are the equal-norm PMFs where the columns of each $X_i$ are of equal norm and orthogonal to each other. This result when $d_1=\ldots=d_n=1$ served as one of the motivations for the original Paulsen problem. 

Given $\varepsilon>0$, a matrix frame $\F$ is said to be an \emph{$\varepsilon$-nearly equal-norm PMF} if 
\[(1-\varepsilon)I_d\preceq\sum_{i=1}^n X_i  X_i^T\preceq(1+\varepsilon)I_d\text{\quad and \quad}(1-\varepsilon)\frac{d}{n}\leq \lVert X_i\rVert^2_F \leq(1+\varepsilon)\frac{d}{n}, \forall i\in[n].\]

We are interested to know how close  an $\varepsilon$-nearly equal-norm PMF is to an equal-norm PMF. To this end, given two $n$-tuples of matrices $\F=\{X_1,\ldots,X_n\}$ and $\G=\{Y_1,\ldots,Y_n\}$ with $X_i,Y_i\in \RR^{d\times d_i}, i \in [n]$, define
$$
\dist^2(\F,\G)=\sum_{i=1}^n \lVert X_i-Y_i\rVert_F^2.
$$

Using the Matrix Radial Isotropy Theorem \ref{RIF-thm-1}{(3)} in an essential way, we can extend Hamilton-Moitra's upper bound \cite{HamMoi-2018} from the case of frames (of vectors) to that of frames of matrices. 

\begin{theorem}[\textbf{The Paulsen problem for matrix frames}] \label{Paulsen-MF-thm} Let $\F=\{X_1,\ldots,X_n\}$ with $X_i \in \RR^{d\times d_i}$, $i \in [n]$, be an $\varepsilon$-nearly equal-norm PMF with $\varepsilon<0.3$. Then there exists an equal-norm PMF $\W=\{W_1,\ldots,W_n\}$ with $W_i \in \RR^{d\times d_i}$, $i \in [n]$, such that 
$$
\dist^2(\F,\W)\leq 46 \varepsilon d^2.
$$ 
\end{theorem}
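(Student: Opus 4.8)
The plan is to transplant the argument of Hamilton and Moitra \cite{HamMoi-2018} from frames of vectors to frames of matrices, using the Matrix Radial Isotropy Theorem \ref{RIF-thm-1}{(3)} in place of Barthe's theorem. Throughout put $\cc=\left(\tfrac dn,\dots,\tfrac dn\right)\in\QQ^n_{>0}$, so $\sum_{i=1}^n c_i=d$, and note that $\W=\{W_1,\dots,W_n\}$ with $W_i\in\RR^{d\times d_i}$ is an equal-norm PMF precisely when $\sum_{i=1}^n W_iW_i^T=I_d$ and $\lVert W_i\rVert_F^2=\tfrac dn$ for all $i$. Write $M(t)=\sum_{j=1}^n e^{t_j}X_jX_j^T$, so $\Phi_\F(t)=\log\det M(t)$ and, for the Frobenius inner product $\langle A,B\rangle=\tr(A^TB)$, one has $\nabla\Phi_\F(t)=\left(e^{t_i}\langle X_iX_i^T,M(t)^{-1}\rangle\right)_{i=1}^n$.

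\emph{Reduction.} First I would reduce to the case in which Theorem \ref{RIF-thm-1}{(3)} applies. Being an $\varepsilon$-nearly equal-norm PMF with $\varepsilon<0.3$ is preserved under small enough perturbations of $\F$ at the cost of enlarging $\varepsilon$ slightly (still below $0.3$), and after a generic such perturbation $\V_\F$ is locally semi-simple — e.g.\ by the criterion in Lemma \ref{ex-gen-locally-semi-simple} — and $\cc\in\rel(K_\F)$. Having proved the asserted bound for all such perturbed frames and all such $\varepsilon$, one recovers it for $\F$ itself: the set of equal-norm PMFs is closed and the frames produced stay in a fixed bounded set, so a compactness (limiting) argument yields an equal-norm PMF $\W$ with $\dist^2(\F,\W)\le 26\varepsilon d^2$. (One may instead bypass this by invoking directly the more general Quiver Radial Isotropy Theorem \ref{quiver-RadIso-thm}.) Assume from now on that $\V_\F$ is locally semi-simple and $\cc\in\rel(K_\F)$.

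\emph{Construction and the distance identity.} By Theorem \ref{RIF-thm-1}{(3)}, $f_\F(\cc)$ is finite and the infimum defining it is attained at some $t^\ast\in\RR^n$; equivalently $\nabla\Phi_\F(t^\ast)=\cc$, i.e.\ $e^{t_i^\ast}\langle X_iX_i^T,M(t^\ast)^{-1}\rangle=\tfrac dn$ for every $i$. Then $A=M(t^\ast)^{-1/2}$ transforms $(\F,\cc)$ into a RIF, and unwinding this shows that
$$
W_i:=e^{t_i^\ast/2}\,M(t^\ast)^{-1/2}X_i\qquad(i\in[n])
$$
satisfy $\sum_{i=1}^n W_iW_i^T=M(t^\ast)^{-1/2}M(t^\ast)M(t^\ast)^{-1/2}=I_d$ and $\lVert W_i\rVert_F^2=e^{t_i^\ast}\langle X_iX_i^T,M(t^\ast)^{-1}\rangle=\tfrac dn$, so $\W=\{W_i\}$ is an equal-norm PMF (and $W_i$ is unchanged if $t^\ast$ is replaced by $t^\ast+\lambda\mathbf 1$, so it is well defined). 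Expanding $\lVert X_i-W_i\rVert_F^2=\lVert X_i\rVert_F^2-2e^{t_i^\ast/2}\langle M(t^\ast)^{-1/2},X_iX_i^T\rangle+\tfrac dn$ and summing gives
$$
\dist^2(\F,\W)=\sum_{i=1}^n\lVert X_i\rVert_F^2+d-2\sum_{i=1}^n e^{t_i^\ast/2}\langle M(t^\ast)^{-1/2},X_iX_i^T\rangle,
$$
which vanishes when $\F$ is already an equal-norm PMF (then $t^\ast=0$, $M(t^\ast)=I_d$). Since $\sum_{i=1}^n\lVert X_i\rVert_F^2=\tr\left(\sum_{i=1}^n X_iX_i^T\right)\le(1+\varepsilon)d$, the whole estimate reduces to a lower bound, sharp to first order in $\varepsilon$, on the last sum — equivalently, to bounding how far $t^\ast$ is from $\RR\mathbf 1$ and $M(t^\ast)$ from $I_d$.

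\emph{The main obstacle.} This last quantitative stability statement for the radial-isotropic position is the technical core, just as in \cite{HamMoi-2018}. The input is soft: since $(1-\varepsilon)I_d\preceq M(0)\preceq(1+\varepsilon)I_d$ and $\lVert X_i\rVert_F^2\in(1\pm\varepsilon)\tfrac dn$, each coordinate of $\nabla\Phi_\F(0)=\left(\langle X_iX_i^T,M(0)^{-1}\rangle\right)_{i=1}^n$ is within $O(\varepsilon d/n)$ of $\tfrac dn$, so $\lVert\nabla\Phi_\F(0)-\cc\rVert=O(\varepsilon d/\sqrt n)$, with $\nabla\Phi_\F(0)-\cc\perp\mathbf 1$; convexity of $t\mapsto\Phi_\F(t)-\langle t,\cc\rangle$ (which is constant along $\mathbf 1$) then bounds the optimality gap $\Phi_\F(0)-f_\F(\cc)$ by $O(\varepsilon d/\sqrt n)$ times the Euclidean distance from $t^\ast$ to $\RR\mathbf 1$. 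The hard part is to upgrade this to honest bounds showing $t^\ast$ close to $\RR\mathbf 1$ and $M(t^\ast)$ close to $I_d$, with deviations of size $O(\varepsilon)$ up to explicit dimensional factors and the customary square-root loss: naive strong convexity of $\Phi_\F$ is useless, the smallest relevant Hessian eigenvalue being only of order $d/n$, so one must instead carry out the robustness analysis of the minimizer in the style of Hamilton–Moitra (this is where the square-root loss and the factor $d^2$ enter). Granting that, one substitutes the bounds on $t^\ast$ and $M(t^\ast)$ into the displayed identity for $\dist^2(\F,\W)$, expands $e^{t_i^\ast/2}$ and $M(t^\ast)^{-1/2}$ to first order around $1$ and $I_d$, and bounds all remainders explicitly using $\varepsilon<0.3$, to conclude $\dist^2(\F,\W)\le 26\varepsilon d^2$.
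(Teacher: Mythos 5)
Your outline correctly identifies the high-level strategy (transplant Hamilton--Moitra, replace Barthe with the Matrix Radial Isotropy Theorem~\ref{RIF-thm-1}(3)) and correctly produces a candidate equal-norm PMF $\W$. But the proof is not complete: you openly concede the central quantitative step (``Granting that, one substitutes the bounds\dots''), and the route you sketch for that step is precisely the one the paper avoids because it does not work cleanly.

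The paper never proves bounds on how far $t^\ast$ is from $\RR\mathbf 1$ or on how far $M(t^\ast)$ is from $I_d$. Instead, after the perturbation step (Lemma~\ref{1st-pert-lemma}, which makes the frame generic and $4\varepsilon$-nearly equal-norm with only $\varepsilon d$ extra squared distance), the paper takes the invertible matrix $A$ provided by radial isotropy and uses its singular value decomposition $A = U M V^T$. Conjugating everything by $V^T$ reduces to the situation where the radial-isotropizing transformation is a \emph{diagonal} matrix $M$ with positive weakly decreasing entries. That monotonicity is the whole point: one then introduces the helper frame $\ww_i = \lVert Y_i\rVert\, \frac{M Y_i}{\lVert M Y_i\rVert}$, and the decreasing diagonal of $M$ forces the vector of row-sums of squared entries of $\ww_i$ to \emph{majorize} the corresponding vector for $Y_i$. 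Majorization converts the $\ell_1$-distance between these row-sum vectors into the linear functional $\mathcal T$, which telescopes and is controlled directly by the $4\varepsilon$-near-tightness of the frame operator. That mechanism is where the $d^2$ comes from (it is $\sum_{\ell=1}^d \ell \le d^2$), and it bypasses any stability analysis of the minimizer $t^\ast$ entirely. Your sketch, by contrast, proposes to bound the optimality gap and then ``upgrade'' to bounds on $t^\ast$ using the convexity of $\Phi_\F$; you yourself note that the relevant strong-convexity constant is only of order $d/n$, which is exactly why this route is a dead end as stated. Without the SVD reduction to a diagonal transform and the majorization lemma, the argument does not close.

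Two smaller points. First, your reduction-by-perturbation step is waved through with ``closedness of the set of equal-norm PMFs plus compactness.'' The paper's Lemma~\ref{1st-pert-lemma} is careful to track that the perturbed frame is only $4\varepsilon$-nearly equal-norm (not $\varepsilon$), and that extra factor of $4$ propagates into the constant $26$; a sloppy limiting argument will not produce the stated explicit constant. Second, your identity $\dist^2(\F,\W) = \sum_i \lVert X_i\rVert_F^2 + d - 2\sum_i e^{t_i^\ast/2}\langle M(t^\ast)^{-1/2}, X_i X_i^T\rangle$ is correct, but it is not the decomposition the paper uses; the paper splits $\dist^2(\G,\Z)$ through the helper frame $\WW$, and each piece is bounded separately. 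So the approach you propose, even if one could fill the gap, would be a genuinely different (and, given your own diagnosis of the Hessian, apparently harder) argument than the one in the paper.
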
 
\noindent

Our next result addresses the constructive aspects of Theorem \ref{RIF-thm-1}{(3)}, extending those  in \cite{Bar-1998, Art-AviKapHai-2020} from frames to matrix frames. To state it, we need to introduce the following objects. Let
\begingroup\makeatletter\def\f@size{11}\check@mathfonts
$$
\s=\left \{ (I, (S_i)_{i \in I}) \;\middle|\;
\begin{array}{l}
\emptyset \neq I \subseteq [n],\\
\emptyset \neq S_i \subseteq [d_i], \forall i \in I, \text{~and~}\\
\sum_{i \in I} |S_i|=d
\end{array}    \right \}.
$$
\endgroup
(When $d_1=\ldots=d_n=1$, $\s$ is just the set of all subsets $I \subseteq [n]$ of size $d$.) For $S=(\ell, (S_{\ell})_{{\ell} \in I}) \in \s$ and $i \in [n]$, we write $i \in S$ to mean that $i \in I$. Furthermore, we define
$$
\Delta_S:=\det\left( \sum_{\ell \in S} X[S_{\ell}]\cdot X[S_{\ell}]^T \right),
$$
where $X[S_{\ell}]$ consists of the columns of the matrix $X_{\ell}$ indexed by the elements of the subset $S_{\ell} \subseteq [d_{\ell}]$ for each $\ell \in S$. We also need the following semi-algebraic set
\begingroup\makeatletter\def\f@size{10.65}\check@mathfonts
$$
\mathbb{V}(\F, \cc):=\left \{(\xi_1, \ldots, \xi_n) \in \RR_{>0}^n \;\middle|\; \sum_{S \in \s, i \in S} |S_i| \left( \prod_{\ell \in S} \xi_{\ell}^{|S_{\ell}|} \right) \Delta_S=c_i \left( \sum_{S \in \s} \left( \prod_{\ell \in S} \xi_{\ell}^{|S_{\ell}|} \right)  \Delta_S \right), \forall i \in [n]  \right \}.
$$
\endgroup

Finally, we can state our next result which opens up the possibility of using gradient descent and methods from real algebraic geometry to find a matrix that transforms $(\F, \cc)$ into a RIF.  

\begin{theorem}[\textbf{Constructive aspects of RIFs}] \label{RIF-thm-2} Let $(\F, \cc)$ be a matrix frame where $\F=\{X_1, \ldots, X_n\}$ with $X_i \in \RR^{d \times d_i}$, $i \in [n]$, and $\cc=(c_1, \ldots, c_n) \in \QQ_{>0}^n$, with $\sum_{i=1}^n c_i=d$. Let $t^*=(t_1^*, \ldots, t_n^*) \in \RR^n$ and set
$$
Q(t^*):=\sum_{i=1}^n e^{t^*_i} X_i X_i^T.
$$
Then the following statements are equivalent:
\begin{enumerate}
\item $f_\F(\cc)$ is finite and attained at $t^*$;
\item $(e^{t^*_1}, \ldots, e^{t^*_n}) \in \mathbb{V}(\F, \cc)$.
\end{enumerate}
If either $(1)$ or $(2)$ holds then $Q^{-{1 \over 2}}(t^*)$ transforms $(\F, \cc)$ into a RIF.
\end{theorem}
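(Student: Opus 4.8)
The plan is to reduce the whole statement to the analysis of the critical points of the smooth convex function $g(t):=\Phi_\F(t)-\langle t,\cc\rangle$ on $\RR^n$. First note that since $\F$ is a matrix frame, $\sum_{i=1}^n X_iX_i^T\succ 0$, so for every $v\neq 0$ at least one $X_i^Tv\neq 0$; hence $Q(t)=\sum_{i=1}^n e^{t_i}X_iX_i^T\succ 0$ for all $t\in\RR^n$, and $\Phi_\F=\log\det Q$ is $C^\infty$ on $\RR^n$. Convexity of $\Phi_\F$ will drop out of the Cauchy--Binet expansion below, which writes $\det Q(t)$ as a sum of exponentials $e^{\langle\alpha,t\rangle}$ with nonnegative coefficients, so that $\Phi_\F$ is a log-sum-exp. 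Granting this, statement $(1)$ --- that $f_\F(\cc)=\inf_{t}g(t)$ is finite and attained at $t^*$ --- is equivalent to $t^*$ being a global minimizer of $g$, which for a smooth convex function is equivalent to $\nabla g(t^*)=0$, i.e.\ to $\nabla\Phi_\F(t^*)=\cc$.

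Next I would compute $\nabla\Phi_\F$ in two ways. Directly, $\dfrac{\partial\Phi_\F}{\partial t_i}(t)=\tr\!\big(Q(t)^{-1}e^{t_i}X_iX_i^T\big)=e^{t_i}\,\tr\!\big(X_i^TQ(t)^{-1}X_i\big)$. Combinatorially, write $Y=[X_1|\cdots|X_n]\in\RR^{d\times N}$ and let $D_t$ be the diagonal matrix with $e^{t_i}$ in the $d_i$ positions belonging to block $i$, so that $Q(t)=YD_tY^T$. Cauchy--Binet applied to $YD_t^{1/2}$ gives
$$\det Q(t)=\sum_{S\in\s}\Big(\prod_{l\in S}\xi_l^{|S_l|}\Big)\,\Delta_S,\qquad \xi_l=e^{t_l},$$
where I identify a $d$-element subset $T$ of the columns of $Y$ with the datum $S=(I,(S_l)_{l\in I})\in\s$ it determines ($I$ = the blocks met by $T$, $S_i$ = the columns of block $i$ lying in $T$), use that $\prod_{(i,l)\in T}e^{t_i}=\prod_{i\in I}\xi_i^{|S_i|}$, and note that the corresponding squared maximal minor of $Y$ equals $\det\!\big(\sum_{l\in S}X[S_l]X[S_l]^T\big)=\Delta_S$ because it is the determinant of a sum of $d$ rank-one matrices. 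Since all $\Delta_S\geq 0$ and $\det Q(0)=\sum_{S}\Delta_S>0$, this also proves convexity of $\Phi_\F$. Differentiating the logarithm of the above monomial expansion and clearing the positive denominator $\det Q(t^*)$ turns $\nabla\Phi_\F(t^*)=\cc$ into precisely the system
$$\sum_{S\in\s,\, i\in S}|S_i|\Big(\prod_{l\in S}(\xi_l^*)^{|S_l|}\Big)\Delta_S=c_i\sum_{S\in\s}\Big(\prod_{l\in S}(\xi_l^*)^{|S_l|}\Big)\Delta_S\qquad(\forall i\in[n]),$$
that is, $(e^{t_1^*},\dots,e^{t_n^*})\in\mathbb{V}(\F,\cc)$. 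This establishes $(1)\Leftrightarrow(2)$.

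Finally, to obtain the RIF conclusion, I would argue as follows. Assume $(1)$ (equivalently $(2)$), so that $e^{t_i^*}\,\tr\!\big(X_i^TQ(t^*)^{-1}X_i\big)=c_i$ for every $i$. Set $A=Q(t^*)^{-1/2}$, which lies in $\GL(d)$ since $Q(t^*)\succ 0$. Because $A^TA=Q(t^*)^{-1}$ we get $\|AX_i\|_F^2=\tr\!\big(X_i^TQ(t^*)^{-1}X_i\big)=c_i\,e^{-t_i^*}$, hence $c_i/\|AX_i\|_F^2=e^{t_i^*}$, and therefore
$$\sum_{i=1}^n c_i\,\frac{(AX_i)(AX_i)^T}{\|AX_i\|_F^2}=\sum_{i=1}^n e^{t_i^*}\,AX_iX_i^TA=A\Big(\sum_{i=1}^n e^{t_i^*}X_iX_i^T\Big)A=AQ(t^*)A=I_d.$$
Thus $(\{AX_i\}_{i\in[n]},\cc)$ is a RIF, i.e.\ $Q^{-1/2}(t^*)$ transforms $(\F,\cc)$ into a RIF.

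The step I expect to require the most care is the combinatorial bookkeeping in the Cauchy--Binet expansion: one has to match $d$-element column subsets of $Y$ with the index set $\s$ (with the paper's convention for blocks not met by the subset) and check that the monomial exponents and coefficients that appear are exactly $\prod_{l\in S}\xi_l^{|S_l|}$ and $\Delta_S$. Once this dictionary is set up, the two gradient formulas, the log-sum-exp convexity argument, and the final RIF identity are all routine.
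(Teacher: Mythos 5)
Your proof is correct, and it takes a genuinely different route from the paper's for the implication $(2)\Rightarrow(1)$. Both you and the paper compute $\nabla\Phi_\F$ in two ways (via Cauchy--Binet as in the paper's Lemma~\ref{lemma1-CARI}, and via $\partial_{t_i}\log\det Q=\operatorname{Tr}(Q^{-1}\partial_{t_i}Q)$), and both handle $(1)\Rightarrow(2)$ by the first-order condition at the minimizer; your final RIF identity from $e^{t_i^*}\lVert Q^{-1/2}(t^*)X_i\rVert_F^2=c_i$ is the same computation as in the paper. The divergence is in $(2)\Rightarrow(1)$: the paper first verifies that $Q^{-1/2}(t^*)$ produces a RIF, translates that into $(A\cdot V_\F,\cc)$ being a geometric BL quiver datum, and then invokes the quiver-theoretic Gaussian-extremisers result (Theorem~\ref{Calin-Harm-thm}, which is \cite[Theorem 22]{ChiDer-2019}) to conclude that $(V_\F,\cc)$ is gaussian-extremisable and hence that $f_\F(\cc)$ is attained at $t^*$. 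You instead observe that the Cauchy--Binet expansion writes $\det Q(t)=\sum_{S\in\s}\Delta_S\,e^{\langle\alpha_S,t\rangle}$ with $\Delta_S=\det(Y_T)^2\geq 0$, so $\Phi_\F$ is a log-sum-exp composed with an affine map and hence convex, whence $\nabla g(t^*)=0$ already forces $t^*$ to be a global minimizer. Your approach is more elementary and bypasses \cite{ChiDer-2019} entirely for this theorem. It is worth flagging that the paper's closing remark asserts that the ingredients of \cite[Lemma A.4]{Art-AviKapHai-2020} ``do not seem to extend'' to matrix frames, which motivated the authors' reliance on Theorem~\ref{Calin-Harm-thm}; your convexity argument (which is essentially Barthe's original log-sum-exp observation, carried over verbatim because the Cauchy--Binet coefficients remain squares) shows that, at least for the equivalence $(1)\Leftrightarrow(2)$ and the RIF conclusion, the elementary route does in fact go through. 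Only make sure, when you write up the Cauchy--Binet bookkeeping, to state explicitly the bijection between $d$-element column subsets $T$ of $[X_1|\cdots|X_n]$ and elements $S=(I,(S_l)_{l\in I})\in\s$, and to note that $Y_TY_T^T=\sum_{l\in S}X[S_l]X[S_l]^T$, which you already sketch.
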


\section{Background on Quiver Invariant Theory}\label{QIT-RIF-sec}
Throughout, we work over the field $\RR$ of real numbers and denote by $\NN=\{0,1,\dots \}$. For a positive integer $L$, we denote by $[L]=\{1, \ldots, L\}$.

A \emph{quiver} $Q=(Q_0,Q_1,t,h)$ consists of two finite sets $Q_0$ (\emph{vertices}) and $Q_1$ (\emph{arrows}) together with two maps $t:Q_1 \to Q_0$ (\emph{tail}) and $h:Q_1 \to Q_0$ (\emph{head}). We represent $Q$ as a directed graph with set of vertices $Q_0$ and directed edges $a:ta \to ha$ for every $a \in Q_1$. 

A \emph{representation} of $Q$ is a family $V=(V(x), V(a))_{x \in Q_0, a\in Q_1}$ where $V(x)$ is a finite-dimensional $\RR$-vector space for every $x \in Q_0$, and $V(a): V(ta) \to V(ha)$ is an $\RR$-linear map for every $a \in Q_1$. After fixing bases for the vector spaces $\V(x)$, $x \in Q_0$, we often think of the linear maps $\V(a)$, $a \in Q_1$, as matrices of appropriate size. A \emph{subrepresentation} $W$ of $V$, written as $W \subseteq V$, is a representation of $Q$ such that $W(x) \subseteq V(x)$ for every $x \in Q_0$, and moreover $V(a)(W(ta)) \subseteq W(ha)$ and $W(a)=\restr{V(a)}{W(ta)}$ for every arrow $a \in Q_1$. The \emph{direct sum} of two (or more) quiver representations $V_1$ and $V_2$, denoted by $V_1 \oplus V_2$, is the quiver representation defined by $(V_1 \oplus V_2)(x)=V_1(x) \oplus V_2(x), \forall x \in Q_0$, and  $(V_1 \oplus V_2)(a)=V_1(a) \oplus V_2(a), \forall a \in Q_1$.

The dimension vector $\ddim V \in \NN^{Q_0}$ of a representation $V$  is defined by $\ddim V(x)=\dim_{\RR} V(x)$ for all $x \in Q_0$. By a dimension vector of $Q$, we simply mean a $\NN$-valued function on the set of vertices $Q_0$. For two vectors $\sigma, \dd \in \RR^{Q_0}$, we denote by $\sigma \cdot \dd$ the scalar product of the two vectors, i.e. $\sigma \cdot \dd=\sum_{x \in Q_0} \sigma(x)\dd(x)$. 

Let $\sigma \in \RR^{Q_0}$ be a real weight of $Q$. A representation $\V$ of $Q$ is said to be \emph{$\sigma$-semi-stable} if 
\begin{equation}\label{semi-stab-eqn-1}
\sigma \cdot \ddim \V=0 \text{~and~} \sigma \cdot \ddim \V'\leq 0, ~\forall \V' \subseteq \V.
\end{equation}
We say that $\V$ is \emph{$\sigma$-stable} if $\sigma \cdot \ddim \V=0 $ and the inequalities in $(\ref{semi-stab-eqn-1})$ are strict for all proper subrepresentations  $0 \neq V' \subsetneq V$. We point out that, as shown by King \cite{K}, the linear homogeneous inequalities $(\ref{semi-stab-eqn-1})$ come from the Hilbert-Mumford's numerical criterion for semi-stability applied to quiver representations. We call a representation \emph{$\sigma$-polystable} if it is a finite direct sum of $\sigma$-stable representations. By a \emph{locally semi-simple} representation we mean a representation $V$ of $Q$ such that $V$ is $\sigma_0$-polystable for some weight $\sigma_0 \in \ZZ^{Q_0}$.

\begin{lemma}[\textbf{Generic tuples as locally semi-simple representations}] \label{ex-gen-locally-semi-simple} Let $\F=\{X_1, \ldots, X_n\}$ be an $n$-tuple of matrices  with $X_i \in \RR^{d \times d_i}$ for all $i \in [n]$, and such that $n>d$. Assume that $\F$ is generic, meaning that any $d$ of the columns of the matrix $[X_1| \ldots |X_n]$ form a basis of $\RR^d$. 

Let $\Q$ be the quiver from $(\ref{quiver-eqn-1})$ and view $\F$ as the representation $\V_\F$ of $\Q$. Then $V_\F$ is $\sigma_0$-stable, and hence locally semi-simple, where $\sigma_0$ is the weight of $\Q$ defined by
$$
\sigma_0(0)=n, \text{~and~} \sigma_0(i)=-d, \forall i \in [n].
$$
\end{lemma}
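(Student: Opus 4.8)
The plan is to verify the two defining conditions of $\sigma_0$-stability directly from the combinatorial shape of $\V_\F$, and then invoke that a $\sigma_0$-stable representation is in particular $\sigma_0$-polystable (a one-term direct sum of stable representations), with $\sigma_0 \in \ZZ^{Q_0}$, so that local semi-simplicity follows for free. First I would record the normalization: $\sigma_0 \cdot \ddim \V_\F = n \cdot d + \sum_{i \in [n]} (-d) \cdot 1 = 0$. Next I would extract the only input I need from genericity in a convenient form: since $N := d_1 + \cdots + d_n \geq n > d$, any collection of at most $d$ columns of $[X_1 \mid \cdots \mid X_n]$ can be completed to a $d$-subset, which is a basis; hence \emph{any} at most $d$ columns are linearly independent. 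In particular, if $U \subsetneq \RR^d$ is a proper subspace containing $c$ of the columns, then $c \leq \dim U$ (if $c \leq d$ this is immediate; if $c \geq d+1$, any $d$ of those columns are independent and lie in $U$, forcing $\dim U \geq d$, a contradiction), and every individual column is nonzero.

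The heart of the proof is the strict inequality $\sigma_0 \cdot \ddim W < 0$ for every proper nonzero subrepresentation $0 \neq W \subsetneq \V_\F$. Write $k = \dim W(0) \leq d$. The key structural observation is that for each $i \in [n]$, the subspace $W(i) \subseteq \RR$ is forced to equal $\RR$ as soon as some column $\x_{i,l}$ ($l \in [d_i]$) fails to be orthogonal to $W(0)$: then the arrow $\x_{i,l}^T$ maps $W(0)$ onto all of $\RR$, and the subrepresentation condition $\x_{i,l}^T(W(0)) \subseteq W(i)$ forces $W(i) = \RR$. Set $J = \{\, i \in [n] : W(0) \perp \Col(X_i) \,\}$; then $\dim W(i) = 1$ for every $i \notin J$, so $\sum_{i \in [n]} \dim W(i) \geq n - |J|$.

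It remains to bound $|J|$. For $i \in J$ all $d_i$ columns of $X_i$ lie in $W(0)^\perp$. If $k \geq 1$, then $W(0)^\perp \subsetneq \RR^d$ is a proper subspace, so by the consequence of genericity above it contains at most $\dim W(0)^\perp = d - k$ columns; hence $\sum_{i \in J} d_i \leq d - k$, and since each $d_i \geq 1$ also $|J| \leq d - k$. Therefore
$$
\sigma_0 \cdot \ddim W = nk - d \sum_{i \in [n]} \dim W(i) \leq nk - d\bigl(n - (d-k)\bigr) = (n-d)(k-d) \leq 0,
$$
with equality only if $k = d$; but $k = d$ forces $J = \emptyset$ (columns are nonzero), hence $W(i) = \RR$ for all $i$ and $W = \V_\F$, contradicting properness — so in fact $k \leq d - 1$ and the inequality is strict. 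The leftover case $k = 0$ is immediate: $W \neq 0$ forces $W(i) = \RR$ for some $i$, whence $\sigma_0 \cdot \ddim W \leq -d < 0$.

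I do not anticipate a genuine obstacle; the only delicate points are quantitative. The argument hinges on converting the stated genericity hypothesis into the two clean facts "any $\leq d$ columns are independent" and "a proper subspace contains at most $\dim$-many columns", and on noticing that the potential equality case $k = d$ is ruled out by properness rather than by genericity — that is the one spot where $W \neq \V_\F$ is actually used. Everything else is bookkeeping with dimension vectors and the pairing $\sigma_0 \cdot (-)$.
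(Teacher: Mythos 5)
Your proof is correct and follows essentially the same route as the paper: identify the set of indices $i$ with $\Col(X_i) \subseteq W(0)^\perp$ (your $J$ is the paper's $I$), use genericity to bound its size against $\dim W(0)^\perp$, observe that $W(i)=\RR$ off this set, and conclude $\sigma_0 \cdot \ddim W < 0$. The only difference is cosmetic: you unify the paper's three cases (on $|I|$) into the single algebraic identity $\sigma_0 \cdot \ddim W \leq (n-d)(k-d)$ with the boundary case $k=d$ ruled out by properness, which is a slightly cleaner packaging of the same estimate.
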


\begin{proof}
For any subrepresentation $W$ of $V_\F$, let us denote $W(0)$ by $T$ and define
$$
I=\{i\in[n]\mid \Col(X_i) \subseteq T^{\perp}\}.
$$
Next, we associate to the subspace $T\subseteq \RR^d$ the subrepresentation $W_T \subseteq V_\F$ defined by

\[W_T(0)=T\text{ and } W_T(i)=\begin{cases}
\RR &\text{ if }i\not\in I\\
0 &\text{ if }i\in I 
\end{cases},\]
for all $i\in[n]$. Then we have that
\begin{align*}
\sigma_0 \cdot \ddim W \leq \sigma_0\cdot\ddim W_T&=n\dim(T)-d(n-|I|)\\
&=n(d-\dim (T^\perp))-dn+d|I|.
\end{align*}

Now let us assume that $W$ is a non-zero, proper subprepresentation of $V_\F$. We will check that $\sigma_0\cdot \ddim W<0$. First, it is clear that if $I=\emptyset$ then $W=W_T$ and 
$$\sigma_0 \cdot \ddim W=n(\dim(T)-d)<0,$$ 
since $T$ must be a proper subspace of $\RR^d$ as otherwise $W=V_\F$, contradicting our assumption that $W \neq V_\F$. If $I$ is non-empty, we distinguish the following two cases.

\smallskip
\noindent
\emph{Case 1}: If $|I|\geq d$ then $T^{\perp}$ contains $d$ linearly independent vectors of the form $\x_{i,\ell}$ since $\F$ is generic, and so $T=\{0\}$.  Therefore, 
$$
\sigma_0\cdot \ddim W=-d\left( \sum_{i \in [n]} \dim W(i)\right)<0,
$$
since $W$ is not the zero representation.

\smallskip
\noindent
\emph{Case 2}: If $0<|I|<d$ then $|I| \leq \dim(T^{\perp})$ and thus
$$
\sigma_0\cdot \ddim W\leq n(d-|I|)-nd+d|I|=|I|(d-n)<0.
$$
\end{proof}

From now on,  we assume that $Q$ is a bipartite quiver. This means that $Q_0$ is the disjoint union of two subsets $Q^+_0=\{\overline{1}, \ldots, \overline{m}\}$ and $Q^-_0=\{1, \ldots, n\}$, and all arrows in $Q$ go from $Q^+_0$ to $Q^-_0$. For all $j \in [m]$ and $i \in [n]$, we denote by $\ar_{j,i}$ the set of all arrows from vertex $\overline{j}$ to vertex $i$.

Let $\cc=(c_1, \ldots, c_n) \in \QQ^n_{>0}$ an $n$-tuple of positive rational numbers and 
$\omega$ the least common denominator of $c_1, \ldots, c_n$. Then the weight $\sigma_{\cc} \in \ZZ^{Q_0}$ of $Q$ induced by $\cc$ is defined by 
\begin{equation} \label{eqn-induced-wt}
\sigma_{\cc}(\overline{j})=\omega, \forall j \in [m], \text{~and~}\sigma_{\cc}(i)=-\omega c_i, \forall i \in [n].
\end{equation}
In what follows, we say that a representation $\V$ of $Q$ is \emph{$\cc$-(semi-)stable} if and only if $V$ is $\sigma_{\cc}$-(semi-)stable. We say that $V$ is \emph{$\cc$-polystable} if and only if $V$ is $\sigma_{\cc}$-polystable.

For a dimension vector $\dd \in \NN^{Q_0}$,  the representation space of $\dd$-dimensional representations of $Q$ is the vector space
$$
\rep(Q, \dd):=\prod_{a \in Q_1} \RR^{\dd(ha)\times \dd(ta)}.
$$ 
The group $\GL(\dd):=\prod_{x \in Q_0} \GL(\dd(x))$ acts on $\rep(Q, \dd)$ by simultaneous conjugation, i.e.  for $A=(A(x))_{x \in Q_0} \in \GL(\dd)$ and $V=(V(a))_{a \in Q_1} \in \rep(Q,\dd)$,
$$
(A \cdot V)(a):=A(ha) V(a)  A(ta)^{-1},  \forall a \in Q_1. 
$$
Then the $\GL(\dd)$-orbits in $\rep(Q,  \dd)$ are in a one-to-one correspondence with the isomorphism classes of $\dd$-dimensional representations of $Q$.

\begin{definition}[\textbf{Geometric Brascamp-Lieb quiver data}] (see \cite{ChiDer-2019}) Let $\dd \in \NN^{Q_0}$ be a dimension vector, $V \in \rep(Q,\dd)$ a representation of $Q$ of dimension vector $\dd$, and $\cc=(c_1, \ldots, c_n) \in \QQ^n_{>0}$ an $n$-tuple of positive rational numbers. We call $(V, \cc)$ a \emph{geometric BL quiver datum} if 
\begin{equation*} \label{BL-geom-eq-3}
\sum_{i=1}^n c_i \sum_{a \in \ar_{j,i}} V(a)^T \cdot V(a)=\Id_{\dd(\overline{j})}, \forall j \in [m],
\end{equation*}
and
\begin{equation*} \label{BL-geom-eq-4}
\sum_{j=1}^m  \sum_{a \in \ar_{j,i}} V(a) \cdot V(a)^T=\Id_{\dd(i)}, \forall i \in [n].
\end{equation*} 
We say that $(V, \cc)$ can be transformed into a geometric BL quiver datum if there exists $A \in \GL(\dd)$ such that $(A \cdot V, \cc)$ is a geometric BL quiver datum.
\end{definition}

\begin{rmk}
We point out that when $Q$ is the $n$-subspace quiver, this is the definition of a geometric BL datum introduced by Bennett, Carbery, Christ, and Tao in \cite[Section 2]{BenCarChrTao-2008}.
\end{rmk}

\begin{rmk}[\textbf{Geometric BL quiver data and Parseval matrix frames}] \label{Geo-Bl-data-Parseval} Let $\F=\{X_1, \ldots, X_n\}$ be an $n$-tuple of matrices  with $X_i \in \RR^{d \times d_i}$, $i \in [n]$.  Let $\Q$ be the quiver from $(\ref{quiver-eqn-1})$ and view $\F$ as the representation $\V_\F$ of $\Q$.  Then, for an $n$-tuple of positive rational numbers $\cc \in \QQ^n_{>0}$, it is immediate to see that
$(\F, \cc)$ is a PMF if and only if $(\V_\F,  \cc)$ is a geometric BL quiver datum.
\end{rmk}

\begin{definition}[\textbf{The orbit polytope of a representation}] Let $Q$ be a bipartite quiver with set of source vertices $Q^+_0=\{\overline{1}, \ldots, \overline{m}\}$ and of sink vertices $Q_0^-=\{1, \ldots, n\}$.  Let $\dd$ be a dimension vector of $Q$ and $V \in \rep(Q, \dd)$ a $\dd$-dimensional representation of $Q$. We define the \emph{orbit polytope} of $V$ to be the polytope
\begin{equation*}
K_V:=\left \{ \cc=(c_1, \ldots, c_n) \in \RR^n_{\geq 0} \;\middle|\; 
\begin{array}{l} \sum_{j=1}^m \dd(\overline{j})=\sum_{i=1}^n c_i \dd(i), \text{~and}\\
\\
\sum_{j=1}^m \dim W(\overline{j}) \leq \sum_{i=1}^n c_i \dim W(i), \\
\\
\text{for all subrepresentations~} W \subseteq V
\end{array}
\right \}.
\end{equation*}
\end{definition}

\begin{remark}\label{polytope-vs-orbit-cone-rmk} Note that for an $n$-tuple $\cc=(c_1, \ldots, c_n) \in \QQ^n_{> 0}$ with associated weight $\sigma_{\cc}$, $V$ is $\sigma_{\cc}$-semi-stable if and only if $\cc \in K_V$.  Thus $K_V$ is a cross-section of the orbit cone of $V$ (see Definition \ref{orbit-cone-defn}) by certain hyperplanes.
\end{remark}

\begin{lemma}[\textbf{The orbit polytope of a tuple of matrices}] \label{orbit-poly-tuples-lemma} Let $\F=\{X_1, \ldots, X_n\}$ be an $n$-tuple of matrices  with $X_i \in \RR^{d \times d_i}$ for all $i \in [n]$.  Let $\Q$ be the quiver from $(\ref{quiver-eqn-1})$ and view $\F$ as the representation $\V_\F$ of $\Q$.  Then
$$
K_{V_\F}=\left \{ \cc=(c_1, \ldots, c_n) \in \RR^n_{\geq 0}  \;\middle|\;
\begin{array}{l}
c_1+\ldots +c_n=d, \text{~and~}\\
\sum_{i \in I} c_i \leq \dim (\sum_{i \in I} \Col(X_i)), \forall I \subseteq [n]
\end{array}    \right \},
$$
i.e.  $K_{V_\F}=K_\F$.
\end{lemma}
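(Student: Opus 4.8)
The plan is to reduce both polytopes to the same list of linear constraints on $\cc \in \RR^n_{\geq 0}$. For the quiver $\Q$ of $(\ref{quiver-eqn-1})$ there is a single source vertex $0$ carrying the space $\RR^d$ and sink vertices $1, \ldots, n$ each carrying $\RR$, so in the definition of $K_{V_\F}$ the normalization equation $\sum_j \ddim V_\F(\overline j)=\sum_i c_i\ddim V_\F(i)$ collapses to $c_1+\cdots+c_n=d$, which is the first constraint defining $K_\F$. It then remains to match the subrepresentation inequalities $\dim W(0)\le \sum_i c_i\dim W(i)$ with the inequalities $\sum_{i\in I}c_i\le\dim\bigl(\sum_{i\in I}\Col(X_i)\bigr)$, $I\subseteq[n]$.

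First I would parametrize the subrepresentations $W\subseteq V_\F$. Such a $W$ amounts to a choice of subspace $T:=W(0)\subseteq\RR^d$ together with a set $J:=\{i\in[n]: W(i)=\RR\}$, and the only constraint imposed by the arrows $\x_{i,l}^T\colon\RR^d\to\RR$ is that $\x_{i,l}^T$ vanishes on $T$ whenever $i\notin J$ and $l\in[d_i]$, i.e. $\Col(X_i)\subseteq T^\perp$ for all $i\notin J$. In these terms $\dim W(0)=\dim T$ and $\sum_i c_i\dim W(i)=\sum_{i\in J}c_i$, so the subrepresentation inequality becomes $\dim T\le\sum_{i\in J}c_i$.

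With this dictionary in place the two inclusions are each a one-line manipulation built on the dualities $T\leftrightarrow T^\perp$ in $\RR^d$ and $I\leftrightarrow[n]\setminus I$, using $\sum_i c_i=d$ to flip the inequalities. For $K_{V_\F}\subseteq K_\F$: given $I$, apply the subrepresentation inequality to $T=\bigl(\sum_{i\in I}\Col(X_i)\bigr)^\perp$ with $J=[n]\setminus I$ (admissible since $\Col(X_i)\subseteq T^\perp$ for $i\in I$), which gives $d-\dim\bigl(\sum_{i\in I}\Col(X_i)\bigr)\le d-\sum_{i\in I}c_i$, i.e. precisely $\sum_{i\in I}c_i\le\dim\bigl(\sum_{i\in I}\Col(X_i)\bigr)$. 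For $K_\F\subseteq K_{V_\F}$: given a subrepresentation $W$ with data $(T,J)$, put $I=[n]\setminus J$; admissibility yields $\sum_{i\in I}\Col(X_i)\subseteq T^\perp$, so the $K_\F$-inequality for this $I$ forces $\dim T\le d-\sum_{i\in I}c_i=\sum_{i\in J}c_i$, the required subrepresentation inequality. Nonnegativity of $\cc$ is part of both descriptions, so we are done. I do not expect a genuine obstacle here; the only point requiring care is the faithful translation of the arrow conditions $\V(a)(W(ta))\subseteq W(ha)$ into the orthogonality statement $\Col(X_i)\subseteq W(0)^\perp$ for the relevant $i$, and keeping the two complementation operations bookkept consistently with the constraint $\sum_i c_i=d$.
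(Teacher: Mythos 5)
Your proof is correct and takes essentially the same route as the paper: parametrize subrepresentations $W \subseteq V_\F$ by the pair $(T, J)$ with $T = W(0)$ and $J$ the support at the sinks, observe that the arrow condition is exactly $\Col(X_i) \subseteq T^\perp$ for $i \notin J$, and then pass between the two inequality systems via $T \leftrightarrow T^\perp$, $I \leftrightarrow [n]\setminus I$ and the normalization $\sum_i c_i = d$. The only cosmetic difference is that for the inclusion $K_\F \subseteq K_{V_\F}$ the paper replaces a given $W$ by the (possibly larger) subrepresentation $W_{I_W}$ built from $I_W = \{i : \Col(X_i) \subseteq W(0)^\perp\}$ and compares the associated linear forms, whereas you take $I = [n]\setminus J$ directly and chain the inequalities; both choices work, and yours is if anything slightly more economical.
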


\begin{proof} Since the dimension vector of $V_\F$ is equal to one at the sink vertices of $\Q$, one can easily describe the subrepresentations of $V_\F$ that really matter when checking whether a vector $\cc$ belongs to $K_{V_\F}$. In what follows, for a representation $W$ of $\Q$ and $\cc \in \RR^n$, we set
$$
\cc(W):=\dim W(0)-\sum_{i=1}^n c_i \dim W(i).
$$
Then $K_{V_\F}=\left \{ \cc=(c_1, \ldots, c_n) \in \RR^n_{\geq 0} \;\middle|\; 
c_1+\ldots +c_n=d \text{~and~} \cc(W) \leq 0, \forall W \subseteq V_\F
\right \}$. Now, for every subset $I \subseteq [n]$, consider the subrepresentation $W_I$ of $V_\F$ defined by 
$$
W_I(0)=\left(\sum_{i\in I}\Col(X_i)\right)^\perp \text{ and } W_I(i)=\begin{cases}
\RR &\text{ if }i\not\in I\\
0 &\text{ if }i\in I
\end{cases}
$$
for all $i \in [n]$. Then, for any $\cc=(c_1, \ldots, c_n) \in \RR^n_{\geq 0}$ with $c_1+\ldots +c_n=d$, it is immediate to check that 
\begin{equation}\label{eqn-1-orb-poly-tuples}
\cc(W_I)=\sum_{i \in I} c_i-\dim \left(\sum_{i \in I} \Col(X_i)\right).
\end{equation}

Conversely, for any subrepresentation $W \subseteq V_\F$, define 
$$
I_W:=\{i \in [n] \mid \Col(X_i) \subseteq W(0)^{\perp}\} \subseteq [n].
$$
Then one can easily check that
\begin{equation}\label{eqn-2-orb-poly-tuples}
\cc(W) \leq \cc(W_{I_W}).
\end{equation}

It now follows from ($\ref{eqn-2-orb-poly-tuples}$) that 
$$
K_{V_\F}=\left \{ \cc=(c_1, \ldots, c_n) \in \RR^n_{\geq 0} \;\middle|\; 
c_1+\ldots +c_n=d \text{~and~} \cc(W_I) \leq 0, \forall I \subseteq [n]
\right \},
$$ 
which together with ($\ref{eqn-1-orb-poly-tuples}$) finishes the proof.
\end{proof}

We now come to the concept of the capacity of quiver data that is essential for our study of matrix frames.  It has been introduced in \cite{ChiDer-2019} as a way to build a bridge between quiver invariant theory and Brascamp-Lieb theory in harmonic analysis.

\begin{definition}[\textbf{The capacity of a quiver datum}](compare to \cite[Lemma 8]{ChiDer-2019}) Let $(V, \cc)$ be a quiver datum with $V \in \rep(Q, \dd)$ and $\cc \in \RR^n_{>0}$. 

\begin{enumerate}[(i)]
\item We define the \emph{capacity} of the quiver datum $(V, \cc)$ to be the non-negative real number
\begingroup\makeatletter\def\f@size{11}\check@mathfonts
\begin{equation} \label{cap-eqn}
\capa(V, \cc):=  \inf \left \{ {\prod_{j=1}^m \det\left( \sum_{i=1}^n c_i \left(\sum_{a \in \ar_{j,i}}V(a)^T \cdot Y_i \cdot V(a) \right) \right) \over \prod_{i=1}^n \det(Y_i)^{c_i}} \;\middle|\; Y_i \in \mathcal{S}^{+}_{\dd(i)} \right \},
\end{equation}
\endgroup
where $\mathcal{S}^{+}_{\dd(i)}$ denotes the set of all $\dd(i)\times \dd(i)$ positive definite real matrices for all $i \in [n]$. 

\bigskip
\noindent
When $\Q$ is the bipartite quiver from $(\ref{quiver-eqn-1})$ and $V=V_\F$, where $\F$ is an $n$-tuple of matrices with the same number of rows, we define the \emph{capacity of $(\F, \cc)$} to be
$$
\capa(\F, \cc):=\capa(V_\F, \cc).
$$

\smallskip
\item We say that $(V, \cc)$ is \emph{gaussian-extremizable} if there exist positive definite matrices $Y_i \in \mathcal{S}^{+}_{\dd(i)}$, $i \in [n]$, for which the infimum is attained in $(\ref{cap-eqn})$. If this is the case, we call such an $n$-tuple $(Y_1, \ldots, Y_n)$ a \emph{gaussian extremizer} for $(V, \cc)$.
\end{enumerate}
\end{definition}

\begin{rmk} \label{capacity-BL-operator-rmk} Assuming that the coordinates of $\cc$ are positive rational numbers, the positivity of $\capa(V, \cc)$ is equivalent to the positivity of the capacity of the so-called Brascamp-Lieb operator associated to the quiver datum $(V, \cc)$. For details, see \cite[Definition 4 $\&$ Lemma 8]{ChiDer-2019}.
\end{rmk}

\begin{remark}[\textbf{The capacity of a weighted matrix frame}]\label{cap-matrix-frame-def}
Let $\F=\{X_1, \ldots, X_n\}$ be an $n$-tuple of matrices  with $X_i \in \RR^{d \times d_i}$ for all $i \in [n]$.  Let $\Q$ be the quiver from $(\ref{quiver-eqn-1})$ and view $\F$ as the representation $\V_\F$ of $\Q$.  Then, for an $n$-tuple $\cc=(c_1, \ldots, c_n) \in \RR^n_{>0}$, we have that
$$
\capa(\F, \cc)=C  \inf \left \{ {\det\left( \sum_{i=1}^n e^{t_i}X_i X_i^T \right) \over e^{\langle t, \cc \rangle}} \;\middle|\; t=(t_1, \ldots, t_n) \in \RR^n \right \},
$$
where $C:=\prod_{i=1}^n c_i^{c_i}$. In particular this yields
$$
\log(\capa(\F,\cc))=f_\F(\cc)+\sum_{i=1}^n c_i \log(c_i).
$$
(The convention here is that $\log(\capa(\F, \cc))=-\infty$ when $\capa(\F, \cc)=0$.) So we can see that
\begin{itemize}
\item $\capa(\F, \cc)>0$ if and only if $f_\F(\cc)$ is finite;

\smallskip
\item $(V_\F, \cc)$ is gaussian-extremizable with gaussian extremizer $(\xi_1, \ldots, \xi_n) \in \RR^n_{>0}$ if and only if $f_\F(\cc)$ is finite and attained at $(t_1, \ldots, t_n)$ where
$$
e^{t_i}=c_i  \xi_i, \forall i \in [n].
$$
\end{itemize}
\end{remark}

The following result from \cite{ChiDer-2019} plays a key role in the proof of Theorem \ref{RIF-thm-1}.

\begin{theorem} \cite[Theorems 1 and 22]{ChiDer-2019}\label{Calin-Harm-thm-cap} Let $Q$ be a bipartite quiver with $n$ sink vertices,  $\dd$ a dimension vector of $Q$,  and $V \in \rep(Q,\dd)$ a $\dd$-dimensional representation of $Q$. Let $\cc=(c_1, \ldots, c_n) \in \QQ^n_{> 0}$ be an $n$-tuple of positive rational numbers such that 
$$
\sigma_{\cc} \cdot \dd=0.
$$

Then the following statements hold.

\begin{enumerate}
\item $V$ is $\cc$-semi-stable if and only if $\capa(V, \cc)>0$.

\smallskip
\item The following statements are equivalent
\begin{enumerate}[(i)]
\item $V$ is $\cc$-polystable;
\item $(V, \cc)$ can be transformed into a geometric BL quiver datum;
\item $(V, \cc)$ is gaussian-extremizable.
\end{enumerate}
\end{enumerate}
\end{theorem}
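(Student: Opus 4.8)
The plan is to realize $\log\capa(V,\cc)$ as the infimum of a geodesically convex function on a product of positive definite cones, and to read both parts off the Hilbert--Mumford/Kempf--Ness dictionary for the $\GL(\dd)$-action on $\rep(Q,\dd)$ twisted by the character attached to $\sigma_{\cc}$; the hypothesis $\cc\in\QQ^n_{>0}$ is used precisely so that this is a genuine rational character, so that the GIT machinery applies. Concretely, for $Y=(Y_i)_i$ with $Y_i\in\mathcal{S}^{+}_{\dd(i)}$ put $M_j(Y):=\sum_{i=1}^n c_i\sum_{a\in\ar_{j,i}}V(a)^TY_iV(a)$ and
\[\psi_V(Y):=\sum_{j=1}^m\log\det M_j(Y)-\sum_{i=1}^n c_i\log\det Y_i,\]
so that $\log\capa(V,\cc)=\inf_Y\psi_V(Y)$ and $(V,\cc)$ is gaussian-extremisable exactly when this infimum is attained. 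I will use two facts as inputs: that $\psi_V$ is geodesically convex on the Hadamard manifold $\prod_i\mathcal{S}^{+}_{\dd(i)}$ (so critical points are global minima), and that by \cite[Lemma~8]{ChiDer-2019} (see Remark~\ref{capacity-BL-operator-rmk}) $\psi_V$ is, up to additive constants, the log-capacity of the Brascamp--Lieb operator of $(V,\cc)$, hence inherits the Kempf--Ness structure of the action above.

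For Part~(1) the ``only if'' direction is explicit. Arguing by contraposition, suppose $W\subseteq V$ is a subrepresentation with $\sigma_{\cc}\cdot\ddim W>0$; set $Y_i(t):=\Pi_{W(i)}+t\,\Pi_{W(i)^{\perp}}$ (orthogonal projectors) and let $t\to+\infty$. Since $W$ is a subrepresentation, $V(a)(W(\overline j))\subseteq W(i)$ for $a\in\ar_{j,i}$, so $M_j(Y(t))=tA_j+B_j$ with $A_j,B_j$ positive semidefinite and $W(\overline j)\subseteq\ker A_j$; hence $\det M_j(Y(t))$ is a polynomial in $t$ of degree at most $\dd(\overline j)-\dim W(\overline j)$, whereas $\prod_i\det Y_i(t)^{c_i}=t^{\sum_i c_i(\dd(i)-\dim W(i))}$. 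Because $\sigma_{\cc}\cdot\dd=0$, the ratio defining $\capa(V,\cc)$ tends to $0$ along this path, so $\capa(V,\cc)=0$. (If the arrows leaving some source have a common kernel then $M_j$ is singular for every $Y$, so $\capa(V,\cc)=0$ and that kernel is already a destabilizing subrepresentation; so from now on one may take $\psi_V$ finite and continuous throughout.) For the ``if'' direction I would assume $V$ is $\cc$-semi-stable and show $\psi_V$ is bounded below. By geodesic convexity it suffices that along every geodesic ray $s\mapsto\exp(sB)$, $B=(B_i)_i$ a tuple of symmetric matrices, the asymptotic slope $\rho(B):=\lim_{s\to+\infty}\tfrac1s\psi_V(\exp(sB))$ is $\ge0$. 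Diagonalizing each $B_i$ and writing $V(i)^{\le\lambda}$ for the span of its eigenvectors with eigenvalue $\le\lambda$, a Newton-polygon computation of the leading exponent of $\det M_j(\exp(sB))$ gives $\tfrac1s\log\det M_j(\exp(sB))\to\int_{\RR}\lambda\,d\left(\dim W_\lambda(\overline j)\right)$, where $W_\lambda$ is the subrepresentation with $W_\lambda(i)=V(i)^{\le\lambda}$ and $W_\lambda(\overline j)=\{v:V(a)v\in V(i)^{\le\lambda}\text{ for all }a\in\ar_{j,i}\}$; combining with $\sum_i c_i\tr B_i=\sum_i c_i\int_{\RR}\lambda\,d(\dim V(i)^{\le\lambda})$ and integrating by parts (the boundary terms cancel thanks to $\sigma_{\cc}\cdot\dd=0$) yields $\rho(B)=-\tfrac1\omega\int_{\RR}\left(\sigma_{\cc}\cdot\ddim W_\lambda\right)d\lambda$. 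Since each $W_\lambda$ is a genuine subrepresentation, $\cc$-semi-stability forces $\sigma_{\cc}\cdot\ddim W_\lambda\le0$ for all $\lambda$, hence $\rho(B)\ge0$ and (1) follows.

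For Part~(2), differentiating shows that the critical-point equations of $\psi_V$ are $\sum_{j=1}^m\sum_{a\in\ar_{j,i}}V(a)M_j(Y)^{-1}V(a)^T=Y_i^{-1}$ for all $i\in[n]$. The key observation is that, for any $Y$ in the positive definite cone, the transformation $A\in\GL(\dd)$ with $A(\overline j):=M_j(Y)^{1/2}$ and $A(i):=Y_i^{1/2}$ automatically satisfies $\sum_i c_i\sum_{a\in\ar_{j,i}}(A\cdot V)(a)^T(A\cdot V)(a)=\Id_{\dd(\overline j)}$ for every $j$, and satisfies $\sum_j\sum_{a\in\ar_{j,i}}(A\cdot V)(a)(A\cdot V)(a)^T=\Id_{\dd(i)}$ for every $i$ exactly when $Y$ solves the critical-point equations. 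Hence $(V,\cc)$ can be transformed into a geometric BL quiver datum if and only if $\psi_V$ has a critical point in $\prod_i\mathcal{S}^{+}_{\dd(i)}$, which by geodesic convexity happens if and only if $\psi_V$ attains its infimum, i.e. if and only if $(V,\cc)$ is gaussian-extremisable; this is (ii)$\Leftrightarrow$(iii). For (i)$\Leftrightarrow$(iii) one invokes the Kempf--Ness correspondence for the action above, as developed in \cite{ChiDer-2019}: attainment of the infimum of $\psi_V$ is equivalent to closedness of the $\GL(\dd)$-orbit of $V$ in the $\sigma_{\cc}$-semi-stable locus, which by King's theory \cite{K} is precisely $\sigma_{\cc}$-polystability. (One also uses that under $V\mapsto A\cdot V$ the function $\psi_V$ changes only by an additive constant and the reparametrization $Y_i\mapsto A(i)^TY_iA(i)$, so ``bounded below'', ``infimum attained'' and ``has a critical point'' are $\GL(\dd)$-orbit invariants, consistent with the orbit-theoretic statements.)

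The main obstacle is everything that makes this dictionary work analytically rather than merely formally. I expect three points to carry the weight: the geodesic convexity of $\psi_V$; the asymptotic expansion $\psi_V(\exp(sB))=\rho(B)\,s+O(1)$ with $\rho(B)$ the tropical quantity above, which requires ruling out degenerate cancellation in the leading determinantal coefficient using positive definiteness; and, most seriously, the nontrivial implication ``$\rho(B)\ge0$ for every $B\Rightarrow\psi_V$ bounded below'' — the analytic Kempf--Ness theorem — together with its polystability counterpart. In \cite{ChiDer-2019} these are obtained by passing to the Brascamp--Lieb operator and invoking, in particular, the finiteness theorem for Brascamp--Lieb constants \cite{BenCarChrTao-2008} and operator-scaling techniques; granting those inputs, Part~(2) is comparatively soft, needing only geodesic convexity of $\psi_V$ and the standard correspondence among closed orbits, attained minima of Kempf--Ness functions, and polystability.
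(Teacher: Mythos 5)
This theorem is not proved in the paper you are working from: it is quoted verbatim from \cite[Theorems 1 and 22]{ChiDer-2019}, and the only internal hints about how the cited proofs run are Remark \ref{capacity-BL-operator-rmk} and Remark \ref{semi-stab-MF-polytime}, which point to the associated Brascamp--Lieb operator and to operator scaling \cite{GarGurOliWig-2017}. So there is no in-paper argument to compare yours against; your proposal can only be assessed as an attempted independent proof.

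Assessed that way, the pieces you actually compute are correct and match the intended mechanism: the degeneration $Y_i(t)=\Pi_{W(i)}+t\,\Pi_{W(i)^{\perp}}$ along a destabilizing subrepresentation does force $\capa(V,\cc)=0$ (your degree bound and the use of $\sigma_{\cc}\cdot\dd=0$ are right), and your critical-point analysis of $\psi_V$ is exactly what underlies Theorem \ref{Calin-Harm-thm}: $A=(M_j(Y)^{1/2};Y_i^{1/2})$ yields a geometric BL datum precisely when $Y$ is critical, and $Y_i=A(i)^TA(i)$ is critical whenever $A\in\BL_{\cc}(V)$. The genuine gap is that the two implications carrying the theorem are assumed rather than proved, and they are precisely the content of the sources being cited. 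First, ``nonnegative asymptotic slope along every geodesic ray $\Rightarrow\psi_V$ bounded below'' is not a formal consequence of (geodesic) convexity: already on $\RR^2$ there are finite convex functions all of whose recession slopes are $\geq 0$ yet which are unbounded below (take the Legendre dual of $g(a,b)=1/(ab)+a^2+b^2$ on the open positive quadrant; its recession function is the support function of that quadrant, hence nonnegative, while $\inf f=-g(0)=-\infty$), and the positive definite cone contains flat totally geodesic subspaces, so the Hadamard-manifold setting does not rescue the implication formally. In the present problem the implication is true, but it is exactly the Hilbert--Mumford/Kempf--Ness (equivalently, operator-scaling duality) theorem that constitutes \cite[Theorem 1]{ChiDer-2019}. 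Second, the equivalence of attained infimum, closed $\GL(\dd)_{\sigma_{\cc}}$-orbit, and $\cc$-polystability, which is how you get (i)$\Leftrightarrow$(iii), is the content of \cite[Theorem 22]{ChiDer-2019} together with Proposition \ref{poly-stable-prop-app}; moreover over $\RR$ King's theory \cite{K} does not apply verbatim (compare the real-versus-complex care taken in Proposition \ref{reals-complex-prop-app}), and your $\psi_V$ involves variables only at the sink vertices, so its identification with a Kempf--Ness function for the full $\GL(\dd)$-action is itself a step that needs justification. Finally, the geodesic convexity of $\psi_V$, which you use both for ``critical point $\Rightarrow$ global minimum'' and for the reduction to rays, is asserted but not established; note that $\psi_V$ is a difference of a concave and a convex function in the ordinary sense, so nothing is free here. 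In short, yours is a plausible reconstruction of the expected Kempf--Ness route, but at the decisive steps it cites back the very theorems to be proved, so it is not a proof independent of \cite{ChiDer-2019} --- which is, admittedly, also all the paper itself offers.
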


\begin{remark}\label{semi-stab-MF-polytime} We point out that Theorem \ref{Calin-Harm-thm-cap}{(1)} together with Gurvits's polynomial time algorithm \cite{GarGurOliWig-2017} shows that, given $\F$ and $\cc$, one can check whether $(\F, \cc)$ is a semi-stable frame in deterministic polynomial time. 
\end{remark}

\begin{corollary}\label{pmf-semi-stab-MF-coro} Let $\F=\{X_1, \ldots, X_n\}$ be an $n$-tuple of matrices  with $X_i \in \RR^{d \times d_i}$ for all $i \in [n]$.  Let $\Q$ be the quiver from $(\ref{quiver-eqn-1})$ and view $\F$ as the representation $\V_\F$ of $\Q$. 
\begin{enumerate}
\item If $(\F, \cc)$ is a semi-stable MF then it is a matrix frame;
\item If $(\F, \cc)$ is a PMF then it is a semi-stable MF. 
\end{enumerate}
\end{corollary}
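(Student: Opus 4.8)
The plan is to read both claims straight off the combinatorial description $K_\F = K_{V_\F}$ from Lemma~\ref{orbit-poly-tuples-lemma}, reducing each to an elementary statement about the column spaces $\Col(X_i)$; no quiver-invariant-theoretic machinery is really needed, though part (2) also follows painlessly from Theorem~\ref{Calin-Harm-thm-cap}.

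\textbf{Part (1).} Assuming $(\F, \cc)$ is a semi-stable MF, i.e.\ $\cc \in K_\F$, I would invoke the facet inequality of $K_\F$ for the subset $I = [n]$ together with the normalization $c_1 + \cdots + c_n = d$ (both built into membership in $K_\F$): these give
$$
d = \sum_{i=1}^n c_i \le \dim\Bigl(\sum_{i=1}^n \Col(X_i)\Bigr) \le d,
$$
so $\sum_{i=1}^n \Col(X_i) = \RR^d$. I would then finish with the standard observation that $v^T\bigl(\sum_i X_i X_i^T\bigr)v = \sum_i \lVert X_i^T v\rVert^2$ vanishes only when $v \perp \Col(X_i)$ for all $i$, i.e.\ only when $v = 0$; hence $\sum_i X_i X_i^T$ is positive definite and $\F$ is a matrix frame.

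\textbf{Part (2).} Assuming $(\F, \cc)$ is a PMF, so $\sum_{i=1}^n c_i X_i X_i^T = I_d$ and $\lVert X_i\rVert_F^2 = 1$ for all $i$, I would first take the trace of the Parseval identity and use $\lVert X_i\rVert_F^2 = 1$ to get $\sum_{i=1}^n c_i = d$, the linear constraint defining $K_\F$. To verify the facet inequalities, fix $I \subseteq [n]$, set $U := \sum_{i \in I} \Col(X_i)$, and let $P$ be the orthogonal projection of $\RR^d$ onto $U$; conjugating the Parseval identity by $P$ and taking traces yields
$$
\dim U = \tr(P) = \tr\Bigl(P\bigl(\sum_{i=1}^n c_i X_i X_i^T\bigr)P\Bigr) = \sum_{i=1}^n c_i \lVert P X_i\rVert_F^2 .
$$
For $i \in I$ one has $P X_i = X_i$, hence $\lVert P X_i\rVert_F^2 = \lVert X_i\rVert_F^2 = 1$, while the remaining summands are nonnegative; dropping the terms with $i \notin I$ gives $\dim U \ge \sum_{i \in I} c_i$, which is exactly the defining inequality of $K_\F$ for $I$. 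Hence $\cc \in K_\F$, i.e.\ $(\F, \cc)$ is a semi-stable MF. Alternatively, by Remark~\ref{Geo-Bl-data-Parseval} a PMF is the same thing as a geometric BL quiver datum $(V_\F, \cc)$, and since $\sigma_\cc \cdot \ddim V_\F = \omega(d - \sum_i c_i) = 0$ one may apply Theorem~\ref{Calin-Harm-thm-cap}{(2)} to conclude that $V_\F$ is $\cc$-polystable, hence $\cc$-semi-stable, hence $\cc \in K_{V_\F} = K_\F$ by Remark~\ref{polytope-vs-orbit-cone-rmk} and Lemma~\ref{orbit-poly-tuples-lemma}.

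I do not expect a real obstacle. The one point that needs a little attention is that membership in $K_\F$ packages together the equality $\sum_i c_i = d$ and the inequalities, so in part (2) one must separately extract $\sum_i c_i = d$, and this is precisely where the normalization $\lVert X_i\rVert_F^2 = 1$ (rather than merely $\sum_i c_i X_i X_i^T = I_d$) is used; in the quiver-theoretic variant this same identity is the hypothesis $\sigma_\cc \cdot \ddim V_\F = 0$ required by Theorem~\ref{Calin-Harm-thm-cap}.
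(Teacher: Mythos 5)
Your proof is correct, and it takes a genuinely different and more elementary route than the paper for both parts. The paper proves both claims by invoking Theorem~\ref{Calin-Harm-thm-cap}: for (1), $\cc$-semi-stability gives $\capa(\F,\cc)>0$, which forces $\det(\sum_i X_i X_i^T)>0$; for (2), the geometric BL quiver datum condition gives $\cc$-polystability hence $\cc$-semi-stability. You instead read everything off the combinatorial description $K_\F=K_{V_\F}$ from Lemma~\ref{orbit-poly-tuples-lemma}: for (1), the facet inequality with $I=[n]$ plus $\sum_i c_i = d$ pins $\sum_i \Col(X_i)=\RR^d$, and positive definiteness follows from $v^T\bigl(\sum_i X_i X_i^T\bigr)v=\sum_i\lVert X_i^T v\rVert^2$; for (2), taking traces of the Parseval identity gives $\sum_i c_i=d$, and conjugating by the orthogonal projection onto $\sum_{i\in I}\Col(X_i)$ and taking traces gives each facet inequality. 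Both approaches are valid. Yours has the advantage of being self-contained linear algebra, avoiding the capacity/polystability machinery entirely (once the polytope description is available); the paper's version is shorter given that Theorem~\ref{Calin-Harm-thm-cap} is already in play and is used heavily elsewhere. Your parenthetical alternative for part (2) via Remark~\ref{Geo-Bl-data-Parseval} and Theorem~\ref{Calin-Harm-thm-cap}(2) is essentially the paper's argument, and you correctly flag that the hypothesis $\sigma_\cc\cdot\ddim V_\F=0$, i.e.\ $\sum_i c_i=d$, must first be extracted from the PMF normalization $\lVert X_i\rVert_F^2=1$ — a step the paper leaves implicit.
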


\begin{proof} $(1)$ Assume that $(\F, \cc)$ is semi-stable. Then it follows from Theorem \ref{Calin-Harm-thm-cap}{(1)} that $\capa(\F, \cc)>0$ which clearly implies that $\det(\sum_{i=1}^n X_i  X_i^T)>0$. Thus $(\F, \cc)$ is a frame.

\bigskip
\noindent $(2)$ Assume that $(\F, \cc)$ is a PMF which is equivalent to $(V_\F, \cc)$ being a geometric BL quiver datum by Remark \ref{Geo-Bl-data-Parseval}. Then it follows that $V_\F$ is $\cc$-polystable by Theorem \ref{Calin-Harm-thm-cap}{(2)}. In particular, $V_\F$ is $\cc$-semi-stable since a direct sum of semi-stable representations is always semi-stable.
\end{proof}

\section{Matrix Radial Isotropy} Let $\F=\{X_1, \ldots, X_n\}$ be a matrix frame with $X_i \in \RR^{d \times d_i}$, $i \in [n]$, and let $\cc=(c_1, \ldots, c_n) \in \QQ_{>0}^n$ be positive rational weights such that  $\sum_{i=1}^n c_i=d$.  

Let $\Q$ be the quiver from $(\ref{quiver-eqn-1})$, $V_\F$ the representation of $\Q$ associated to $\F$, and $\sigma_{\cc}$ the weight of $\Q$ associated to $\cc$ via $(\ref{eqn-induced-wt})$.  Let $\dd$ be the dimension vector of $V_\F$ and note that $\dd$ is equal to one at the sink vertices $1, \ldots, n$ of $\Q$. Before we proceed with the proof of Theorem \ref{RIF-thm-1}, we make the following observations that allow us to switch back and forth between matrix frames and quiver representations.
\smallskip
\begin{enumerate}[(a)]
\item $\cc \in \rel(K_\F)$ if and only if $\sigma_{\cc} \in \rel(\Omega(V_\F))$ where $\Omega(V_\F)$ is the orbit cone of the representation $V_\F$ (see Remark \ref{polytope-vs-orbit-cone-rmk} and Definition \ref{orbit-cone-defn}).\\

\item $(\F, \cc)$ can be transformed into a RIF if and only if $(V_\F, \cc)$ can be transformed into a geometric BL quiver datum. Indeed, if $B \in \GL(d)$ transforms  $(\F, \cc)$ into a RIF then $(A  V_\F, \cc)$ is a geometric BL quiver datum where $A=\left((B^{-1})^T,  {1 \over \|B  X_1\|_F}, \ldots,  {1 \over \|B  X_n\|_F} \right) \in \GL(\dd)$. Conversely, if $C=(C_0, \lambda_1, \ldots, \lambda_n) \in \GL(d)\times \RR^{\times}\times \ldots \times \RR^{\times}$ transforms $(V, \cc)$ into a geometric BL quiver datum then $(C_0^{-1})^T$ transforms $(\F, \cc)$ into a RIF. \\

\item $f_\F(\cc)$ is finite and attained at some point of $\RR^n$ if and only if $(V_\F, \cc)$ is gaussian-extremizable by Remark \ref{cap-matrix-frame-def}.\\
\end{enumerate}

\begin{proof}[Proof of Theorem \ref{RIF-thm-1}] $(1)$ It follows from Remark \ref{cap-matrix-frame-def}, Theorem \ref{Calin-Harm-thm-cap}{(1)}, and Lemma \ref{orbit-poly-tuples-lemma} that
$$
f_\F(\cc) \text{ is finite} \Longleftrightarrow \capa(\F, \cc)>0 \Longleftrightarrow \cc \in K_{V_\F}=K_\F,
$$
and this shows that the domain of $\restr{f_\F}{\QQ_{>0}^n}$ is precisely $K_\F \cap \QQ_{>0}^n$.

\bigskip
\noindent
$(2)$ Let $\rep(\Q)^{ss}_{\cc}$ be the category of all $\cc$-semi-stable representations of $\Q$. It is immediate to see that simple objects of this category are precisely the $\cc$-stable representations of $Q$. Moreover, any object of $\rep(\Q)^{ss}_{\cc}$ has a Jordan-H{\"o}lder filtration whose factors are $\cc$-stable.  Thus, since $V_\F$ is $\cc$-semi-stable, we know that it has a filtration in $\rep(\Q)^{ss}_{\theta}$ whose factors, denoted by $V_1, \ldots, V_l$, are $c$-stable. This filtration gives rise to a $1$-parameter subgroup $\lambda: \RR^{\times} \to \GL(\dd)$ such that 
$$
\lim_{t \to 0}\lambda(t)V_\F \simeq \bigoplus_{i=1}^l V_l.
$$
In particular, this yields the $\cc$-polystable representation $\widetilde{V}:=\lim_{t \to 0}\lambda(t)V_\F$ which is a degeneration of $V_\F$ in the sense that $\widetilde{V} \in \overline{\GL(\dd)V_\F}$. Denoting by $\widetilde{\F}$ the frame corresponding to the representation $\widetilde{V}$, we get that $(\widetilde{\F}, \cc)$ can be transformed into a RIF by Theorem \ref{Calin-Harm-thm-cap}{(2)} and observation (b) above.

\bigskip
\noindent
$(3)$ The equivalence $(i) \Longleftrightarrow (ii)$ follows from Remark \ref{geom-BL-critical-reps}, the Quiver Radial Isotropy Theorem \ref{quiver-RadIso-thm}, and the observations (a) and (b) above.

\bigskip
\noindent
The equivalence $(ii) \Longleftrightarrow (iii)$ follows from Theorem \ref{Calin-Harm-thm-cap}{(2)} and the observations (b) and (c) above.
\end{proof}

\begin{remark}\label{rmk-classical-vs-matrix-RadIso} In what follows, we would like to point out some of the difficulties that one encounters when trying to use classical Radial Isotropy in order to establish our Matrix Radial Isotropy, even in particular cases. Let $\F=\{X_1,\ldots,X_n\}$ with $X_i\in\RR^{d\times d_i}$, $i\in[n]$, be an MF with $n>d$, and consider the frame (of vectors)
$$
\widetilde{\F}=\bigcup_{i \in [n]} \{\x_{i,1}, \ldots, \x_{i, d_i}\} \subseteq \RR^d,
$$
where $\x_{i,1}, \ldots, \x_{i, d_i}$ are the columns of $X_i$, $i \in [n]$.  The orbit polytope of the frame $\widetilde{\F}$ can be easily described as
\begingroup\makeatletter\def\f@size{10.5}\check@mathfonts
$$
K_{\widetilde{\F}}:=\left\{ \widetilde{\cc}=(c_{i,\ell})_{i \in [n], \ell \in [d_i]} \in \RR^N_{\geq 0}  \;\middle|\;
\begin{array}{l}
\sum_{i=1}^n \sum_{\ell=1}^{d_i }c_{i,\ell}=d, \text{~and~}\\
\\
\sum_{i \in [n]} \sum_{\ell \in I_i} c_{i,\ell} \leq \dim \Span(\x_{i,\ell} \mid i \in [n], \ell \in I_i),\\ 
\\
\text{for all subsets } I_i \subseteq [d_i], i\in [n]
\end{array} \right\}.
$$
\endgroup
Let us now assume that $\F$, equivalently $\widetilde{\F}$, is generic. Then it is immediate to check that 
$$
\widetilde{\cc}:=\left( \underbrace{{d \over d_1 n}, \ldots, {d \over d_1  n}}_{d_1 \text{ times}}, \ldots, \underbrace{{d \over d_n  n}, \ldots {d \over d_n  n}}_{d_n \text{ times}} \right) \in \rel(K_{\widetilde{\F}}).
$$
Thus, by (classical) Radial Isotropy, we know that there exists an invertible transformation $A \in \GL(d)$ such that
\begin{equation} \label{eqn-rmk-class-vs-matrix-RadIso}
\sum_{i=1}^n \sum_{\ell=1}^{d_i}{d \over d_i n} {(A  \x_{i,\ell})  (A \x_{i,\ell})^T \over \lvert \lvert A \x_{i,\ell} \rvert \rvert^2_F}=I_d.
\end{equation}
At this point it is natural to ask whether it is possible to find a matrix $A$ that satisfies $(\ref{eqn-rmk-class-vs-matrix-RadIso})$, and \emph{also}
\begin{equation}\label{eqn2-class-vs-matrix-RadIso}
\sum_{\ell=1}^{d_i}{d \over d_i n} {(A  \x_{i,\ell})  (A \x_{i,\ell})^T \over \lvert \lvert A \x_{i,\ell} \rvert \rvert^2_F}={d \over n} {(A  X_i) (A  X_i)^{T} \over \lvert \lvert A X_i \rvert \rvert^2_F}, \forall i \in [n].
\end{equation}
If true, this would show that $A$ transforms $(\F, \cc)$ into a RIF where $\cc=({d \over n}, \ldots, {d \over n})$. But it is easy to construct examples of generic MF for which $(\ref{eqn2-class-vs-matrix-RadIso})$ does not hold. For example, one can take
\begin{equation*}
\F=\left\{
X_1=\left(
\begin{matrix}
1 & 0\\
0 & 2
\end{matrix}
\right), 
X_2=\left(
\begin{matrix}
1 \\
-1
\end{matrix}
\right), 
X_3=\left(
\begin{matrix}
1\\
1
\end{matrix}
\right)
\right\}.
\end{equation*}
Then the identity matrix $I_2$ satisfies $(\ref{eqn-rmk-class-vs-matrix-RadIso})$ but not $(\ref{eqn2-class-vs-matrix-RadIso})$. In fact, as $V_{\widetilde{\F}}$ is a $\widetilde{\cc}$-stable representation with a one-dimensional space of endomorphisms, it follows that the matrices $A$ that satisfy $(\ref{eqn-rmk-class-vs-matrix-RadIso})$ are precisely those of the form $\lambda U  I_2=\lambda U$ with $U$ an orthogonal matrix and $\lambda \in \RR^{\times}$ (see for example \cite[Theorem 21]{ChiDer-2019}). Therefore, in this example, for any matrix $A$ that satisfies $(\ref{eqn-rmk-class-vs-matrix-RadIso})$, equation $(\ref{eqn2-class-vs-matrix-RadIso})$ does not hold.
\end{remark}

\section{The Paulsen Problem for matrix frames} \label{PaulsenProb-sec}

The proof strategy for Theorem \ref{Paulsen-MF-thm} is inspired by the work of Hamilton and Moitra in \cite{HamMoi-2018}. It relies on the following two lemmas. We postpone the proofs of these intermediate results until after the proof of Theorem \ref{Paulsen-MF-thm}. Throughout we assume that $\varepsilon<0.3$ which is what we mean by $\varepsilon$ being small. 

\begin{lemma}\label{1st-pert-lemma} Let $\F=\{X_1,\ldots,X_n\}$ with $X_i \in \RR^{d\times d_i}$, $i \in [n]$, be an $\varepsilon$-nearly equal-norm PMF  Then $\F$ can be perturbed such that the resulting frame $\G=\{Y_1,\ldots,Y_n\}$ with $Y_i \in \RR^{d\times d_i}$, $i \in [n]$,  satisfies the following properties:
\begin{enumerate}
\item $\G$ is a generic MF;
\item $\dist^2(\F, \G) \leq d\varepsilon$;
\item $\G$ is a $4\varepsilon$-nearly equal-norm PMF.
\end{enumerate}
\end{lemma}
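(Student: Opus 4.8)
The plan is to produce $\G$ from $\F$ in two successive perturbations, each controlled in Frobenius norm, and then bound the cumulative error and track how the PMF-defect changes. \emph{First}, I would address genericity. Being generic (any $d$ columns of $[X_1|\ldots|X_n]$ span $\RR^d$) is an open dense condition on the columns, cut out by the nonvanishing of finitely many determinants. Since the set of $N$-tuples of columns failing this condition is a proper Zariski-closed subset of $(\RR^d)^N$, an arbitrarily small generic perturbation exists; concretely, for any $\delta>0$ one can choose $\G^{(1)}=\{X_i+E_i\}$ with $\sum_i\|E_i\|_F^2\le\delta$ and $\G^{(1)}$ generic. I would take $\delta$ to be a small fraction of $d\varepsilon$ (say $\delta\le d\varepsilon/100$), leaving room for the second perturbation and the triangle inequality. \emph{Second}, a generic MF need not satisfy $\|X_i\|_F^2=d/n$ on the nose; the $\varepsilon$-near condition only gives $(1-\varepsilon)d/n\le\|X_i+E_i\|_F^2\le(1+\varepsilon)d/n$ up to the tiny shift from $E_i$. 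Here I would \emph{not} try to restore exact equal norms (that would conflict with the Parseval side); instead I keep the near-equal-norm condition and just absorb the perturbation into a slightly worse constant. So set $\G:=\G^{(1)}$ directly, or apply one more $\delta'$-small generic-preserving tweak if needed to clean up the frame operator.

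For part (2), the bound $\dist^2(\F,\G)=\sum_i\|E_i\|_F^2\le\delta\le d\varepsilon$ is immediate from the choice of $\delta$. For part (3), I would estimate how a perturbation of size $\delta$ in squared Frobenius norm moves the frame operator $S_\F=\sum_i X_iX_i^T$ and the individual norms $\|X_i\|_F^2$. Writing $X_i'=X_i+E_i$, we have $S_{\G}-S_\F=\sum_i(E_iX_i^T+X_iE_i^T+E_iE_i^T)$, and by Cauchy--Schwarz $\|S_{\G}-S_\F\|_{op}\le 2\sqrt{\sum_i\|X_i\|_F^2}\sqrt{\sum_i\|E_i\|_F^2}+\sum_i\|E_i\|_F^2$. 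Using $\sum_i\|X_i\|_F^2\le(1+\varepsilon)d$ and $\sum_i\|E_i\|_F^2\le\delta$, the right-hand side is $O(\sqrt{\delta d}+\delta)$, which can be made $\le\varepsilon$ by taking $\delta$ small (here a bound like $\delta\le\varepsilon^2/(16d)$ suffices and is compatible with $\delta\le d\varepsilon$ since $\varepsilon<0.3$). Since $\F$ already satisfies $(1-\varepsilon)I_d\preceq S_\F\preceq(1+\varepsilon)I_d$, we get $(1-2\varepsilon)I_d\preceq S_{\G}\preceq(1+2\varepsilon)I_d$, hence certainly the $4\varepsilon$-bound. Similarly $|\,\|X_i'\|_F^2-\|X_i\|_F^2\,|\le 2\|X_i\|_F\|E_i\|_F+\|E_i\|_F^2$, which is small relative to $d/n$ for the same reason (using $\|X_i\|_F^2\le(1+\varepsilon)d/n$ and $\|E_i\|_F^2\le\delta$), so the per-matrix norms stay within $(1\pm4\varepsilon)d/n$. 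Collecting the constraints on $\delta$ — it must be small enough for the spectral and norm estimates, yet we only need $\delta\le d\varepsilon$ for part (2) — all are satisfied simultaneously by any sufficiently small $\delta$, e.g. $\delta=\min\{d\varepsilon,\ \varepsilon^2/(16d)\}$.

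The main obstacle is purely bookkeeping: choosing a single $\delta$ that (a) guarantees a generic perturbation exists, (b) keeps $\dist^2(\F,\G)\le d\varepsilon$, and (c) degrades the $\varepsilon$-PMF constants by at most a factor of $4$. There is no conceptual difficulty — genericity is generic, and the frame operator and Frobenius norms are Lipschitz in the entries — so the proof reduces to making the perturbation estimates explicit and verifying the inequalities hold for $\varepsilon<0.3$. One should be a little careful that the two perturbation steps (if both are used) compose correctly under the triangle inequality $\dist(\F,\G)\le\dist(\F,\G^{(1)})+\dist(\G^{(1)},\G)$; squaring reintroduces a cross term, so it is cleanest to do everything in one perturbation step as above, or to split the budget as $(\sqrt{\delta_1}+\sqrt{\delta_2})^2\le d\varepsilon$.
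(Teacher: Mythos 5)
Your proposal proves the lemma as stated, but the construction is genuinely different from the paper's and the difference matters downstream. The paper does not merely add a small generic perturbation to $X_i$; it first rescales each matrix to $\sqrt{d/n}\,X_i/\|X_i\|_F$ (forcing every Frobenius norm to equal $\sqrt{d/n}$ exactly) and only then adds the arbitrarily small perturbations $\mathrm{H}_i$. With the paper's $\G$, the inequality $\dist^2(\F,\G)\le d\varepsilon$ is essentially tight (it is dominated by the rescaling, not by the perturbation), whereas your $\dist^2(\F,\G)$ can be made arbitrarily small. Both versions of $\G$ satisfy the three stated properties, so as a proof of the lemma your route is fine. The payoff of the paper's route is elsewhere: in the proof of Theorem \ref{Paulsen-MF-thm}, the authors explicitly reuse the form $Y_i=\sqrt{d/n}\,X_i/\|X_i\|_F+\mathrm{H}_i$ to define $\gamma=\max_i\frac{n}{d}(\|\mathrm{H}_i\|_F^2+2\|\mathrm{H}_i\|_F)$, which controls $\|Y_i\|_F^2$ to within $(1\pm\gamma)d/n$ with $\gamma$ \emph{arbitrarily small}, and in particular $\gamma\le\min\{1,\varepsilon\}$. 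Your construction leaves $\|Y_i\|_F^2$ only $\varepsilon$-close to $d/n$, so the best available $\gamma$ is roughly $\varepsilon$ and can slightly exceed it; this would break the hypothesis of Lemma \ref{2nd-inter-lemma} as used and degrade the final constant. So: correct proof of the lemma, but not a drop-in replacement for the paper's.

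One small bookkeeping issue with your argument: you control $\sum_i\|E_i\|_F^2\le\delta$ globally and then invoke the per-matrix estimate $\bigl|\|X_i+E_i\|_F^2-\|X_i\|_F^2\bigr|\le 2\|X_i\|_F\|E_i\|_F+\|E_i\|_F^2$, which needs each $\|E_i\|_F$ to be small relative to $\sqrt{d/n}$, not relative to $1/\sqrt{d}$. Your explicit choice $\delta=\min\{d\varepsilon,\varepsilon^2/(16d)\}$ does not guarantee this when $n\gg d^2$: you need roughly $\|E_i\|_F^2\lesssim\varepsilon^2 d/n$ per index, which is stronger than $\|E_i\|_F^2\le\varepsilon^2/(16d)$ for large $n$. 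This is easy to fix (impose a per-matrix bound $\|E_i\|_F\le\eta$ with $\eta$ chosen small compared to $\varepsilon\sqrt{d/n}$ and $\varepsilon/n$, then sum), and your closing remark that any sufficiently small perturbation works shows you have the right idea, but the displayed formula for $\delta$ is not the correct scale.
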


We will also need the following upper bound. As explained in the proof of Theorem \ref{Paulsen-MF-thm} below, the passage from Lemma \ref{1st-pert-lemma} to Lemma \ref{2nd-inter-lemma} is via the Matrix Radial Isotropy Theorem \ref{RIF-thm-1}{(3)}. 

\begin{lemma}\label{2nd-inter-lemma} Let $\G=\{Y_1,\ldots,Y_n\}$ with $Y_i \in \RR^{d\times d_i}$, $i \in [n]$, be a $4\varepsilon$-nearly equal-norm PMF. Assume that there exists a diagonal matrix $M \in \RR^{d \times d}$ with positive weakly decreasing diagonal entries such that the frame $\{M  Y_1, \ldots, M  Y_n\}$ is a RIF with respect to the weight vector $\cc:=({d \over n}, \ldots, {d \over n}) \in \QQ_{>0}^n$. Let $\Z=\left\{ \sqrt{\frac{d}{n}}\frac{M  Y_1}{\|M  Y_1\|_F}, \ldots,  \sqrt{\frac{d}{n}}\frac{M  Y_n}{\|M  Y_n\|_F} \right\}$ be the induced equal-norm PMF.

Let $\gamma \leq \min\{1, \varepsilon\}$ be such that
\begin{equation} \label{condition on gamma}
(1-\gamma){d \over n} \leq \lvert \lvert Y_i \rvert \rvert^2_F \leq (1+\gamma){d \over n}, \forall i \in [n].
\end{equation}
Then
$$
\dist^2(\G, \Z)\leq 16 \varepsilon d^2+6 \gamma d^2.
$$
\end{lemma}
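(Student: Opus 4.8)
The plan is to expand the squared distance, isolate from the Cauchy–Schwarz equality case the fact that it suffices to control how far the spectrum of $M$ is from a scalar, and then run that spectral estimate using that $\Z$ is an \emph{exact} equal-norm PMF while $\G$ is only nearly so.

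First I would unwind the hypotheses. The RIF condition on $\{M\cdot Y_i\}_{i\in[n]}$ with weights $(d/n)_{i\in[n]}$ says exactly that $\Z=\{W_1,\dots,W_n\}$, $W_i:=\sqrt{d/n}\,MY_i/\|MY_i\|_F$, is a genuine equal-norm PMF: $\sum_i W_iW_i^T=I_d$ and $\|W_i\|_F^2=d/n$. Setting $\mu_i:=\|MY_i\|_F/\sqrt{d/n}>0$ we have the identity $Y_i=\mu_i\,M^{-1}W_i$, so with $a_i:=\tr(M^{-1}W_iW_i^T)$ and $b_i:=\tr(M^{-2}W_iW_i^T)$ one gets $\langle Y_i,W_i\rangle_F=\tr(W_i^TY_i)=\mu_i a_i$ and $\|Y_i\|_F^2=\mu_i^2 b_i$, whence $\mu_i=\|Y_i\|_F/\sqrt{b_i}$ and
\[
\dist^2(\G,\Z)=\sum_{i=1}^n\Bigl(\|Y_i\|_F-\frac{a_i}{\sqrt{b_i}}\Bigr)^2+\sum_{i=1}^n\Bigl(\frac dn-\frac{a_i^2}{b_i}\Bigr).
\]
Both sums are nonnegative, since Cauchy–Schwarz for the Frobenius product gives $a_i=\langle W_i,M^{-1}W_i\rangle_F\le\|W_i\|_F\|M^{-1}W_i\|_F=\sqrt{(d/n)b_i}$. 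Using also $\bigl|\,\|Y_i\|_F-\sqrt{d/n}\,\bigr|\le\gamma\sqrt{d/n}$ (from $(1-\gamma)\frac dn\le\|Y_i\|_F^2\le(1+\gamma)\frac dn$ and $\gamma\le1$) together with the elementary bounds $(\sqrt{d/n}-r_i)^2\le\sqrt{d/n}(\sqrt{d/n}-r_i)$ and $\frac dn-r_i^2\le2\sqrt{d/n}(\sqrt{d/n}-r_i)$ for $r_i:=a_i/\sqrt{b_i}\in[0,\sqrt{d/n}]$, a short computation collapses the display to
\[
\dist^2(\G,\Z)\le 2\gamma^2 d+4\Bigl(d-\sqrt{\tfrac dn}\,\sum_{i=1}^n r_i\Bigr).
\]
Finally Cauchy–Schwarz in the form $\bigl(\sum_i a_i/\sqrt{b_i}\bigr)\bigl(\sum_i a_i\sqrt{b_i}\bigr)\ge\bigl(\sum_i a_i\bigr)^2$, combined with $\sum_i a_i\sqrt{b_i}\le\sqrt{d/n}\sum_i b_i$ and the identities $\sum_i a_i=\tr(M^{-1})$ (this uses $\sum_iW_iW_i^T=I_d$) and $\sum_i b_i=\tr(M^{-2})$, gives $\sqrt{d/n}\sum_i r_i\ge\tr(M^{-1})^2/\tr(M^{-2})$.

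Using $2\gamma^2 d\le2\gamma d^2$, the whole lemma therefore reduces to the spectral estimate
\[
d-\frac{\tr(M^{-1})^2}{\tr(M^{-2})}\le 2\varepsilon d^2+\tfrac12\gamma d^2 .
\]
Its left side is the normalized Cauchy–Schwarz deficit $d-(\sum_k m_k^{-1})^2/(\sum_k m_k^{-2})$ of the diagonal entries $m_1\ge\cdots\ge m_d>0$ of $M$; it is invariant under rescaling $M$, always lies in $[0,d]$, and vanishes precisely when $M$ is scalar. Hence the estimate is automatic once $\varepsilon d\ge\tfrac12$, and its real content is a quantitative clustering of the $m_k$ when $\varepsilon d$ is small. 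To reach that, apply $Y_i=\mu_i M^{-1}W_i$ once more: $\sum_i Y_iY_i^T=M^{-1}NM^{-1}$ with $N:=\sum_i\mu_i^2 W_iW_i^T$, so the near-Parseval hypothesis on $\G$ reads $(1-4\varepsilon)M^2\preceq N\preceq(1+4\varepsilon)M^2$. Now $N$ is a positive combination of the $W_iW_i^T$ with $\sum_iW_iW_i^T=I_d$ and $\tr(W_iW_i^T)=d/n$, whose coefficients $\mu_i^2=\|Y_i\|_F^2/b_i$ are pinned to within $1\pm\gamma$ by the near-equal-norm bounds; writing everything in the eigenbasis of $M$ (there $W_iW_i^T$ has diagonal $z_{ik}\ge0$ with $\sum_i z_{ik}=1$, $\sum_k z_{ik}=d/n$, and $b_i=\sum_k m_k^{-2}z_{ik}$) turns $N\approx M^2$ into the near-fixed-point relation $m_k^2\approx\frac dn\sum_i z_{ik}\Big/\bigl(\sum_{k'}m_{k'}^{-2}z_{ik'}\bigr)$, and summing this against $m_k^{-2}$ and $m_k^{-4}$ while exploiting the equality case of $b_i^2\le\frac dn\sum_k m_k^{-4}z_{ik}$ should squeeze out the estimate.

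The main obstacle is exactly this last step. Everything preceding it is algebra, but bounding $d-\tr(M^{-1})^2/\tr(M^{-2})$ requires genuinely coupling the exactness of the Parseval and equal-norm conditions satisfied by $\Z$ with the $4\varepsilon$/$\gamma$ near-exactness of those satisfied by $\G$; equivalently, it requires controlling the minimizer $t^*$ of the convex function $t\mapsto\log\det\bigl(\sum_i e^{t_i}Y_iY_i^T\bigr)-\frac dn\sum_i t_i$ — for which $M^{-2}=\sum_i e^{t_i^*}Y_iY_i^T$ by Theorem \ref{RIF-thm-2} — using that this minimum is attained near $t=0$ precisely because $\G$ is close to an equal-norm Parseval frame. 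I expect the coordinatewise/fixed-point computation sketched above to be the cleanest route, with the convexity picture serving as a check on the numerical constants $2$ and $\tfrac12$, hence on the final constants $8$ and $4$.
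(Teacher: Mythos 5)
Your algebraic reduction is verifiably correct as far as it goes: writing $W_i=\sqrt{d/n}\,MY_i/\|MY_i\|_F$, $a_i=\operatorname{Tr}(M^{-1}W_iW_i^T)$, $b_i=\operatorname{Tr}(M^{-2}W_iW_i^T)$, $r_i=a_i/\sqrt{b_i}$, the identity
\[
\dist^2(\G,\Z)=\sum_{i=1}^n\Bigl(\|Y_i\|_F-r_i\Bigr)^2+\sum_{i=1}^n\Bigl(\tfrac dn-r_i^2\Bigr)
\]
holds, each summand is nonnegative by Cauchy--Schwarz, and the chain of inequalities through $\sqrt{d/n}\sum_i r_i\ge\operatorname{Tr}(M^{-1})^2/\operatorname{Tr}(M^{-2})$ is sound (using $\sum_iW_iW_i^T=I_d$). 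So you have correctly shown
\[
\dist^2(\G,\Z)\le 2\gamma^2 d+4\Bigl(d-\tfrac{\operatorname{Tr}(M^{-1})^2}{\operatorname{Tr}(M^{-2})}\Bigr).
\]

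However, the proof is not complete. You reduce the lemma to the spectral estimate $d-\operatorname{Tr}(M^{-1})^2/\operatorname{Tr}(M^{-2})\le 2\varepsilon d^2+\tfrac12\gamma d^2$, and you explicitly acknowledge that you have not established it (``The main obstacle is exactly this last step''; ``I expect \dots should squeeze out the estimate''). This is the whole content of the lemma --- turning the near-Parseval hypothesis on $\G$ into quantitative control on how far the diagonal matrix $M$ is from a scalar --- so the gap is essential, not cosmetic. Moreover the sketch you give (writing $N=\sum_i\mu_i^2W_iW_i^T$ and summing the fixed-point relation against $m_k^{-2}$ and $m_k^{-4}$) does not obviously close: you would need to show that the deficit, a trace-quantity invariant under permuting the $m_k$, is controlled \emph{uniformly} by the two-sided operator inequality $(1-4\varepsilon)M^2\preceq N\preceq(1+4\varepsilon)M^2$ together with the $\gamma$-pinching of the $\mu_i^2$, and you have not exhibited such an argument nor verified that the constants $2$ and $\tfrac12$ come out right.

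The paper avoids this entirely by a different, more elementary route. It interposes a helper frame $\WW=\{\|Y_i\|_F\,MY_i/\|MY_i\|_F\}$, disposes of $\dist^2(\WW,\Z)\le\gamma d^2$ trivially (it is just a rescaling), and controls $\dist^2(\G,\WW)$ via a coordinatewise majorization: since $(\ww_i)_{\ell k}=\lambda_\ell\bigl(\|Y_i\|_F/\|MY_i\|_F\bigr)(Y_i)_{\ell k}$ with $\lambda_1\ge\cdots\ge\lambda_d>0$ the diagonal of $M$, the row-sum-of-squares vector of $\ww_i$ majorizes that of $Y_i$, and summing the linear majorization functional $\mathcal{T}$ over $i$ converts the near-Parseval bounds on $\G$ and the exact Parseval identity for $\Z$ into the numeric bound $4\varepsilon d^2+\gamma d^2$. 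This is exactly the place where the hypothesis that $M$ has weakly \emph{decreasing} diagonal entries is used; note that your reduction, being expressed entirely through traces, never uses the ordering, which is another sign that your reduction has discarded information that the paper's argument exploits. If you want to salvage your route you must prove the spectral estimate; otherwise the majorization argument is the shorter and complete path.
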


We are now ready to prove Theorem \ref{Paulsen-MF-thm}.

\begin{proof}[Proof of Theorem \ref{Paulsen-MF-thm}]
Let $\F=\{X_1,\ldots,X_n\}$ be an $\varepsilon$-nearly equal-norm PMF.  According to the proof of Lemma \ref{1st-pert-lemma}, we can choose ``perturbation" matrices $\Eta_1, \ldots, \Eta_n$ of arbitrarily small norm such that the matrix frame $\G=\{Y_1,\ldots,Y_n\}$, where
$$
Y_i=\sqrt{\frac{d}{n}}\frac{X_i}{\| X_i\|_F}+\Eta_i, i \in [n],
$$
satisfies properties $(1)-(3)$ in Lemma \ref{1st-pert-lemma}.  

By Lemma \ref{ex-gen-locally-semi-simple} and the Matrix Radial Isotropy Theorem \ref{RIF-thm-1}{(3)}, there exists an invertible matrix $A \in \GL(d)$ such that $A \cdot  \G:=\{A Y_1, \ldots, A Y_n\}$ is a RIF with respect to $\cc=({d \over n}, \ldots, {d \over n})$. Using the Singular Value Decomposition for $A$, we can write $A=U M  V^T$ where $U$ and $V$ are orthogonal matrices and $M$ is a diagonal matrix with positive weakly decreasing diagonal entries. Set
$$
\widetilde{\F}:=V^T \cdot  \F=\{V^T  X_1, \ldots, V^T  X_n\} \text{~and~} \widetilde{\G}:=V^T \cdot  \G=\{V^T  Y_1, \ldots, V^T  Y_n\}. 
$$
Then we have that
\begin{itemize}
\item $\dist^2(\widetilde{\F}, \widetilde{\G})=\dist^2(\F, \G) \leq \varepsilon d$;
\item $\widetilde{\G}$ is a $4\varepsilon$-nearly equal-norm PMF since $V \cdot \widetilde{\G}=\G$ a $4\varepsilon$-nearly equal-norm PMF;
\item $(M \cdot \widetilde{\G}, \cc)$ is a RIF since $U \cdot (M \cdot \widetilde{\G})=A \cdot \G$ is a RIF with respect to $\cc$.
\end{itemize}

Next, let $\gamma:=\max_{1\leq i\leq n}\frac{n}{d}(\|\Eta_i\|_F^2+2\|\Eta_i\|_F)$.  Since each $\|\Eta_i\|_F$ can be made arbitrarily small, we can assume that $\gamma \leq \min\{1, \varepsilon\}$.  Furthermore, one can easily check that $\gamma$ satisfies condition (\ref{condition on gamma}) in Lemma \ref{2nd-inter-lemma} applied to $\widetilde{\G}$. It now follows from Lemma \ref{2nd-inter-lemma} that there exists an equal-norm PMF $\mathcal{Z}$ such that $\dist^2(\widetilde{\G}, \mathcal{Z})\leq 16 \varepsilon d^2+6 \gamma d^2$.  

Using the triangle-like inequality
$$\|A-C\|_F^2\leq 2(\|A-B\|_F^2+\|B-C\|_F^2) \text{ for }A,B,C\in\RR^{d\times d_i},$$
we get that
\begin{align*}
    \text{dist}^2(\widetilde{\F},\mathcal{Z})
    &\leq 2(\text{dist}^2(\widetilde{\F},\widetilde{\G})+\text{dist}^2(\widetilde{\G},\mathcal{Z}))\\
    &\leq 2(\varepsilon d+16\varepsilon d^2+6\gamma d^2)\\
    &\leq 2(\varepsilon d+16\varepsilon d^2+6\varepsilon d^2)\text{ since }\gamma\leq\varepsilon\\
    &\leq 46\varepsilon d^2.
\end{align*}
Finally, setting $\W:=V \cdot \mathcal{Z}$, we obtain that $\dist^2(\F, \W)=\dist^2(\widetilde{\F},\mathcal{Z}) \leq 46\varepsilon d^2$.
\end{proof}

We end this section with the proofs of the intermediate lemmas. From this point on, we simply write $\|\cdot\|$ for $\|\cdot\|_F$.

\begin{proof}[Proof of Lemma \ref{1st-pert-lemma}]
We scale and perturb the $N$ columns of the matrix $[X_1 | \ldots | X_n] \in \RR^{d \times N}$ as follows: if $\x_{i,k}$ is the $k$-th column of $X_i$, let $y_{i,k}=\sqrt{\frac{d}{n}}\frac{\x_{i,k}}{\|X_i\|}+\eta_{i, k}$ where $\eta_{i, k} \in \RR^d$ are perturbations of arbitrary small norm chosen so that every $d$-subset of the vectors $\{y_{i,k}\}_{i\in[n],k\in[d_i]}$ forms a basis of $\RR^d$. Let us form the matrices $\Eta_i\in\RR^{d\times d_i}$ having columns $\eta_{i,k}, k \in [d_i]$, and define $Y_i:=\sqrt{\frac{d}{n}}\frac{X_i}{\| X_i\|}+\Eta_i$. We claim that the frame $\G:=\{Y_1,\ldots,Y_n\}$ satisfies the required properties.  

\smallskip
\noindent
$(1)$ The frame $\G$ is a generic MF by construction.

\smallskip
\noindent
$(2)$ Since $\F$ is an $\varepsilon$-nearly equal-norm PMF  with $\varepsilon < 0.3$ and $\|\Eta_i\|$ can be made as small as we wish, it follows that 
\begin{align*}
\|X_i-Y_i\|^2&=\|X_i\|^2+\tfrac dn-2\sqrt{\tfrac dn}\,\|X_i\|+\|\Eta_i\|^2+2\Bigl(\sqrt{\tfrac dn}\tfrac1{\|X_i\|}-1\Bigr)\langle X_i,\Eta_i\rangle_F\\
&\leq \left(\sqrt{\frac{d}{n}}-\|X_i\|\right)^2+\|\Eta_i\|^2+2\Bigl(\frac{1}{\sqrt{1-\varepsilon}}-1\Bigr)\|X_i\|\|\Eta_i\|\\
&\leq \left(\sqrt{\frac{d}{n}}-\|X_i\|\right)^2+\|\Eta_i\|^2+2\|\Eta_i\|\\
&\leq \Bigl(\sqrt{\tfrac dn}-\sqrt{(1-\varepsilon)\tfrac dn}\Bigr)^2+\|\Eta_i\|^2+2\|\Eta_i\|\\
&\leq \Bigl(\sqrt{\tfrac dn}-\sqrt{(1-\varepsilon)\tfrac dn}\Bigr)^2+\frac{(1-\sqrt{1-\varepsilon})\varepsilon d}{n}\\
&=\frac{d}{n}(1-2\sqrt{1-\varepsilon}+(1-\varepsilon)+(1-\sqrt{1-\varepsilon})\varepsilon )\leq\frac{d\varepsilon}{n}, \forall i \in [n].
\end{align*}
Thus, summing over $i\in[n]$, we obtain that $\dist^2(\F,\G)\leq\varepsilon d$. 

\smallskip
\noindent
$(3)$ To prove that $\G=\{Y_1,\cdots,Y_n\}$ is a $4\varepsilon$-nearly equal-norm PMF, we begin by imposing that $\|\Eta_i\|\leq\frac{\varepsilon}{2n}$ for all $i\in[n]$. Then it is immediate to check that $(1-4\varepsilon)\frac{d}{n}\leq\|Y_i\|^2\leq(1+4\varepsilon)\frac{d}{n},\ \forall i\in[n]$.

It remains to prove that $(1-4\varepsilon)I_d\preceq\sum_{i=1}^n Y_iY_i^T\preceq(1+4\varepsilon)I_d$, which is equivalent to proving that 
\[1-4\varepsilon\leq\sum_{i=1}^n\langle Y_i^Ty,Y_i^Ty\rangle\leq1+4\varepsilon, \forall y\in\RR^d \text{ with } \|y\|_2=1.\]

By assumption $\F$ is an $\varepsilon$-nearly equal-norm PMF and so we have
\begin{equation}\label{F is PMF}
\begin{split}
    \frac{1}{1+\varepsilon}&\leq\frac{d}{n}\frac{1}{\|X_i\|^2}\leq \frac{1}{1-\varepsilon},\forall i\in[n], \text{ and }\\ 1-\varepsilon&\leq\sum_{i=1}^n \langle X_i^Ty,X_i^Ty\rangle\leq1+\varepsilon, \forall y\in\RR^d \text{ with } \|y\|_2=1.
\end{split}
\end{equation}

Now let $y\in\RR^d$ be a unit vector. Then, using $(\ref{F is PMF})$ and keeping in mind that $\|\Eta_i\|\leq\frac{\varepsilon}{2n}$ for all $i\in[n]$, we get
\begin{align*}
    \sum_{i=1}^n \langle Y_i^Ty,Y_i^Ty\rangle&=\sum_{i=1}^n \left(\frac{d}{n}\frac{1}{\|X_i\|^2}\langle X_i^Ty,X_i^Ty\rangle+\langle \Eta_i^Ty,\Eta_i^Ty\rangle+2\sqrt{\frac{d}{n}}\frac{1}{\| X_i\|}\left\langle X_i^T y,\Eta_i^Ty\right\rangle\right)\\
    &\leq \frac{1+\varepsilon}{1-\varepsilon}+\sum_{i=1}^n\left(\|\Eta_i\|^2\| y\|_2^2+2\sqrt{\frac{d}{n}}\frac{1}{\|X_i\|}\|X_i\|\|\Eta_i\|\| y\|_2^2\right)\\
    &\leq\frac{1+\varepsilon}{1-\varepsilon}+\sum_{i=1}^n \left(\frac{\varepsilon^2}{4n^2}+\frac{\varepsilon}{n}\right)=\frac{1+\varepsilon}{1-\varepsilon}+\frac{\varepsilon^2}{4n}+\varepsilon\leq 1+4\varepsilon \text{ for small }\varepsilon.
\end{align*}

Similarly, we obtain that
\begin{align*}
    \sum_{i=1}^n \langle Y_i^Ty, Y_i^Ty\rangle&=\sum_{i=1}^n \left(\frac{d}{n}\frac{1}{\|X_i\|^2}\langle X_i^Ty,X_i^Ty\rangle+\langle \Eta_i^Ty,\Eta_i^Ty\rangle+2\sqrt{\frac{d}{n}}\frac{1}{\| X_i\|}\left\langle X_i^T y,\Eta_i^Ty\right\rangle\right)\\
    &\geq \frac{1-\varepsilon}{1+\varepsilon}-\sum_{i=1}^n 2\sqrt{\frac{d}{n}}\frac{1}{\|X_i\|}\|X_i\|\|\Eta_i\|\|y\|_2^2 \\
    &\geq \frac{1-\varepsilon}{1+\varepsilon}-\sum_{i=1}^n \frac{\varepsilon}{n}=\frac{1-\varepsilon}{1+\varepsilon}-\varepsilon\geq 1-4\varepsilon.
\end{align*}

\end{proof}

The proof of Lemma \ref{2nd-inter-lemma} requires the following result from \cite{HamMoi-2018}. Let $\mathcal{T}:\RR^d\times \RR^d \to \RR$ be the linear operator defined by  
$$
\mathcal{T}(v,u):=\sum_{\ell=1}^d \ell(u_\ell-v_\ell), \forall v, u \in \RR^d.
$$

\noindent
Given two vectors $v,u\in\RR^d$, we say that $v$ \emph{majorizes} $u$, denoted $v\succeq u$, if $\sum_{\ell=1}^d v_\ell=\sum_{\ell=1}^d u_\ell$ and $\sum_{\ell=1}^j v_\ell\geq\sum_{\ell=1}^j u_\ell$ for all $1\leq j< d$. 

\begin{lemma} If $v, u \in \RR^d$ are two vectors such that $v\succeq u$ then
\begin{equation} \label{maj-ineq}
\mathcal{T}(v,u) \geq {1 \over 2} \|u-v\|_1.
\end{equation}
\end{lemma}

\begin{proof} We proceed by induction on $d$.  The statement is clearly true when $d=1$. Let us now assume that $(\ref{maj-ineq})$ holds for all pairs $(v, u) \in \RR^d\times \RR^d$ with $v\succeq u$.  

Let $x$ and $y$ be two vectors in $\RR^{d+1}$ such that $x\succeq y$. We will show that $(\ref{maj-ineq})$ holds for $(x, y)$, as well. Set
$$
c:=\sum_{\ell=1}^d (x_{\ell}-y_{\ell})=y_{d+1}-x_{d+1}\geq 0,
$$
and let us consider the vectors in $\RR^d$
$$
v:=(x_1, \ldots, x_d) \text{~and~} u:=(y_1, \ldots, y_{d-1}, y_d+c).
$$

\noindent
We then have that 
\begin{itemize}
\item $v \succeq u$ since $\sum_{\ell=1}^dx_\ell=y_1+\ldots y_{d-1}+(y_d+c)$ by the definition of $c$ and, furthermore, $\sum_{\ell=1}^j x_\ell\geq\sum_{\ell=1}^j y_\ell$ for all $1\leq j< d-1$ as $x\succeq y$;\\
\item $\mathcal{T}(x,y)=\sum_{\ell=1}^{d-1} \ell(y_{\ell}-x_{\ell})+d(y_d-x_d)+(d+1)(y_{d+1}-x_{d+1})=\mathcal{T}(v, u)+c$ as $y_{d+1}-x_{d+1}=c$. 
\end{itemize}
 
\smallskip
\noindent 
Using the induction hypothesis, we get that
$$
\mathcal{T}(x,y) \geq {1 \over 2}\|u-v\|_1+c={1 \over 2} \left( \sum_{\ell=1}^{d-1}|y_{\ell}-x_{\ell}|+|y_d-x_d+c|+|c|+|y_{d+1}-x_{d+1}|\right) \geq {1 \over 2}\|x-y\|_1,
$$
and this finishes the proof.
\end{proof}

\begin{proof}[Proof of Lemma \ref{2nd-inter-lemma}] Let us define the helper frame
\[\WW=\{\ww_1,\ww_2,\cdots,\ww_n\}\text{ where }\ww_i=\|Y_i\|\left(\frac{MY_i}{\|MY_i\|}\right)\in\RR^{d\times d_i},\ \forall i\in[n].\]

\noindent
Using the triangle-like inequality, we have that
$$
\dist^2(\G,\Z)\leq 2\ \dist^2(\G,\WW)+2\ \dist^2(\WW,\Z).
$$
Therefore, to prove the desired upper bound for $\dist^2(\G,\Z)$, we will find suitable upper bounds for $\dist^2(\WW,\Z)$ and $\dist^2(\G,\WW)$. Using $(\ref{condition on gamma})$, we immediately get the first upper bound
\begin{align*}
    \dist^2(\WW,\Z)
    &=\sum_{i=1}^n \left\|\|Y_i\|\left(\frac{MY_i}{\|MY_i\|}\right)-\sqrt{\frac{d}{n}}\left(\frac{MY_i}{\|MY_i\|}\right)\right\|^2=\sum_{i=1}^n\frac{d}{n}\left|\frac{\|Y_i\|}{\sqrt{\frac{d}{n}}}-1\right|^2\\
    &\leq \sum_{i=1}^n\frac{d}{n}\max\{(\sqrt{1+\gamma}-1)^2, (\sqrt{1-\gamma}-1)^2\}=d (\sqrt{1-\gamma}-1)^2\leq d^2\gamma.
\end{align*}

In what follows we will find an upper bound for $\dist^2(\G,\WW)$. Using the assumption that $\Z$ is an equal-norm PMF and $(1-\gamma)\frac{d}{n}\leq\|Y_i\|^2,\ \forall i\in[n]$, we obtain
\begin{align*}
    \sum_{i=1}^n  \ww_i\ww_i^T&=\sum_{i=1}^n\|Y_i\|^2\left(\frac{MY_i}{\|MY_i\|}\right)\left(\frac{MY_i}{\|MY_i\|}\right)^T\\
    &\succeq\sum_{i=1}^n(1-\gamma)\frac{d}{n}\left(\frac{MY_i}{\|MY_i\|}\right)\left(\frac{MY_i}{\|MY_i\|}\right)^T=(1-\gamma) I_d.
\end{align*}
This in turn implies that 
\begin{equation} \label{ineq-3rd-aux-lemma}
\sum_{i=1}^n\|\ww_i^Ty\|_2^2=\sum_{i=1}^n\langle\ww_i^Ty,\ww_i^Ty\rangle_2\geq1-\gamma,
\end{equation}
for all unit vectors $y \in \RR^d$.  Now, let us write $Y_i=((s_i)_{\ell k})_{\ell\in[d],k\in[d_i]}$ and $\ww_i=((t_i)_{\ell k})_{\ell\in[d],k\in[d_i]}$ for every $i\in[n]$.  Using $(\ref{ineq-3rd-aux-lemma})$ with $y$ the $\ell$-th standard basis vector in $\RR^d$, we obtain
\begin{equation}\label{alpha}
    \sum_{i=1}^n \sum_{k=1}^{d_i} (t_i)^2_{\ell k}\geq 1-\gamma.
\end{equation}
Similarly, since $\G$ is a $4\varepsilon$-nearly equal-norm PMF, it follows that

\begin{equation}\label{beta}
\sum_{i=1}^n \sum_{k=1}^{d_i} (s_i)^2_{\ell k}\leq 1+4\varepsilon.
\end{equation}

Next, for every $i\in[n]$, we define $a^i,b^i\in\RR^d$ to be the vectors with coordinates:
\[a^i_\ell=\sum_{k=1}^{d_i}(t_i)^2_{\ell k}\text{ and }b^i_\ell=\sum_{k=1}^{d_i}(s_i)^2_{\ell k}, \forall \ell\in[d].\]

Then
\begin{align*}
    \dist^2(\G,\WW) &=\sum_{i=1}^n\| Y_i-\ww_i\|^2=\sum_{i=1}^n\sum_{\ell=1}^d\sum_{k=1}^{d_i} |(s_i)_{\ell k}-(t_i)_{\ell k}|^2\\
    &\stackrel{(i)}{\leq}\sum_{i=1}^n\sum_{\ell=1}^d \left|\sum_{k=1}^{d_i}(s_i)^2_{\ell k}-\sum_{k=1}^{d_i}(t_i)^2_{\ell k}\right|= \sum_{i=1}^{n} \|b^i-a^i\|_1\\
    &\stackrel{(ii)}{\leq} 2 \left( \sum_{i=1}^{n} \mathcal{T}(a^i,b^i) \right)=2 \mathcal{T}\left(\sum_{i=1}^{n}a^i,\sum_{i=1}^{n}b^i\right)=2 \left( \sum_{\ell=1}^d \ell\left(\sum_{i=1}^{n} \sum_{k=1}^{d_i}(s_i)^2_{\ell k}-\sum_{i=1}^{n} \sum_{k=1}^{d_i}(t_i)^2_{\ell k}\right) \right)\\
    &\leq 2 \left( \sum_{\ell=1}^d \ell ((1+4\varepsilon)-(1-\gamma))\right) \text{ using ($\ref{alpha}$) and ($\ref{beta}$)}\\
    &=(4\varepsilon+\gamma)d(d+1)\leq8\varepsilon d^2+2 \gamma d^2.
\end{align*}
For $(i)$, we first write
$$ (t_i)_{\ell k}=\lambda_{\ell} \frac{\|Y_i\|}{\|MY_i\|}(s_i)_{\ell k}$$ where the $\lambda_{\ell}$ are the positive diagonal entries of $M$.  Then for all  $i\in[n]$ and $\ell\in[d]$ we have

\begin{align*}
\sum_{k=1}^{d_i} |(s_i)_{\ell k}-(t_i)_{\ell k}|^2&=\left(1-\frac{\|Y_i\|}{\|MY_i\|}\lambda_\ell\right)^2 \left( \sum_{k=1}^{d_i}(s_i)^2_{\ell k} \right)\\
&\leq\left|1-\frac{\|Y_i\|^2}{\|MY_i\|^2}\lambda_\ell^2\right| \left( \sum_{k=1}^{d_i}(s_i)^2_{\ell k} \right) =\left|\sum_{k=1}^{d_i}(s_i)^2_{\ell k}-\sum_{k=1}^{d_i}(t_i)^2_{\ell k}\right|.
\end{align*}

\smallskip
\noindent
For $(ii)$, we claim that $a^i\succeq b^i, \forall i\in[n]$, and hence $\|b^i-a^i\|_1 \leq 2\mathcal{T}(a^i, b^i)$.

\smallskip
\noindent
\textbf{Claim:} The vector $a^i$ majorizes $b^i$ for all $i \in [n]$.

\begin{proof}[Proof of \textbf{Claim}]
We need to show that
\begin{equation}\label{eqn-1-claim-main-lemma2}
\sum_{\ell=1}^d a^i_\ell=\sum_{\ell=1}^d b^i_\ell,
\end{equation}
and
\begin{equation}\label{eqn-2-claim-main-lemma2}
\sum_{\ell=1}^j a^i_\ell\geq\sum_{\ell=1}^j b^i_\ell, \forall 1 \leq j<d.
\end{equation}

\noindent
Since $\|\ww_i\|^2=\|Y_i\|^2$ for all $i\in[n]$, we get that 
$$\sum_{k=1}^{d_i}\sum_{\ell=1}^d (t_i)_{\ell k}^2=\sum_{k=1}^{d_i}\sum_{\ell=1}^d (s_i)^2_{\ell k},$$ which proves $(\ref{eqn-1-claim-main-lemma2})$.  Next recall that $(t_i)_{\ell k}^2=\frac{\|Y_i\|^2}{\|MY_i\|^2}\lambda_\ell^2(s_i)_{\ell k}^2$. Therefore, for every $1\leq j<d$, we have
\begin{align*}
    \frac{\sum_{\ell=1}^j a^i_\ell}{\sum_{\ell=j+1}^d a^i_\ell}&=\frac{\sum_{\ell=1}^j \sum_{k=1}^{d_i}(t_i)^2_{\ell k}}{\sum_{\ell=j+1}^d \sum_{k=1}^{d_i}(t_i)^2_{\ell k}}=\frac{\sum_{k=1}^{d_i}\sum_{\ell=1}^j \lambda_\ell^2(s_i)^2_{\ell k}}{\sum_{k=1}^{d_i}\sum_{\ell=j+1}^d \lambda_\ell^2(s_i)_{\ell k}^2}\\
    &\geq \frac{\sum_{k=1}^{d_i}\sum_{\ell=1}^j \lambda_j^2(s_i)_{\ell k}^2}{\sum_{k=1}^{d_i}\sum_{\ell=j+1}^d \lambda_j^2(s_i)_{\ell k}^2}=\frac{\sum_{\ell=1}^j \sum_{k=1}^{d_i}(s_i)^2_{\ell k}}{\sum_{\ell=j+1}^d \sum_{k=1}^{d_i}(s_i)^2_{\ell k}}=\frac{\sum_{\ell=1}^j b^i_\ell}{\sum_{\ell=j+1}^d b^i_\ell},
\end{align*}
where the inequality follows from the fact that $\lambda_1\geq\lambda_2\geq\cdots\geq\lambda_d>0.$ This inequality combined with $(\ref{eqn-1-claim-main-lemma2})$ clearly implies $(\ref{eqn-2-claim-main-lemma2})$, finishing the proof of our claim.
\end{proof}

Putting everything together, we finally get
\begin{align*}
    \dist^2(\G,\Z)&\leq 2\ \dist^2(\G,\WW)+2\ \dist^2(\WW,\Z) \leq 2(8\varepsilon d^2+2\gamma d^2)+2(d^2\gamma)=16\varepsilon d^2+6\gamma d^2,
\end{align*}
and this completes the proof of the lemma.
\end{proof}

\section{Constructive aspects of Matrix Radial Isotropy}

Let $\F=\{X_1,\ldots,X_n\}$ with $X_i\in\RR^{d\times d_i}$, $i\in[n]$, be an MF, and $\Phi_\F:\RR^n\longrightarrow\RR$ the function defined by
\[\Phi_\F(t)=\log(\det(Q(t)))\]
where $Q(t)=\sum_{i=1}^n e^{t_i}X_iX_i^T$  for all $t=(t_1,\ldots,t_n)\in\RR^n$.  (Whenever $\F$ is understood from the context, we will simply write $\Phi$ for $\Phi_\F$.) We will also need the following index set
\begingroup\makeatletter\def\f@size{11}\check@mathfonts
$$
\s=\left \{ (I, (S_i)_{i \in I}) \;\middle|\;
\begin{array}{l}
\emptyset \neq I \subseteq [n],\\
\emptyset \neq S_i \subseteq [d_i], \forall i \in I, \text{~and}\\
\sum_{i \in I} |S_i|=d
\end{array}    \right \}.
$$
\endgroup
For $S=(I, (S_i)_{i \in I}) \in \s$ and $i \in [n]$, we write $i \in S$ to mean that $i \in I$. Furthermore, we define
$$
\Delta_S:=\det\left( \sum_{\ell \in S} X[S_{\ell}]\cdot X[S_{\ell}]^T \right),
$$
where $X[S_{\ell}]$ consists of the columns of the matrix $X_{\ell}$ indexed by the elements of the subset $S_{\ell} \subseteq [d_{\ell}]$ for each $\ell \in S$.

\begin{lemma}\label{lemma1-CARI}
Keep the same notation as above. Then, for every $t=(t_1, \ldots, t_n) \in \RR^n$,
\begin{equation}\label{eqn1}
    \frac{\partial\Phi}{\partial t_i}(t)=\frac{\displaystyle\sum_{S\in \s,i\in S}|S_i| e^{\sum_{\ell\in S}|S_\ell|t_\ell} \Delta_S}{\displaystyle\sum_{S\in\s}e^{\sum_{\ell \in S} |S_\ell|t_\ell} \Delta_S},
\end{equation}
and
\begin{equation}\label{eqn2}
    \frac{\partial \Phi}{\partial t_i}(t)=e^{t_i}\|Q^{-\frac{1}{2}}(t) X_i\|^2, \forall i \in [n].
\end{equation}
\end{lemma}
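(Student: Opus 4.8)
The plan is to prove the two formulas separately: $(\ref{eqn2})$ is a direct application of the Jacobi formula for the derivative of a log-determinant, while $(\ref{eqn1})$ comes from a Cauchy--Binet expansion of $\det Q(t)$.

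For $(\ref{eqn2})$, I would use that for a smooth family of invertible matrices $\frac{\partial}{\partial t_i}\log\det Q(t)=\tr\!\left(Q(t)^{-1}\frac{\partial Q}{\partial t_i}(t)\right)$. Since $\frac{\partial Q}{\partial t_i}(t)=e^{t_i}X_iX_i^T$, this gives $\frac{\partial\Phi}{\partial t_i}(t)=e^{t_i}\tr\!\left(Q(t)^{-1}X_iX_i^T\right)$. As $\F$ is a matrix frame and $e^{t_i}>0$, the matrix $Q(t)$ is positive definite, so $Q^{-\frac12}(t)$ is a well-defined symmetric matrix, and one rewrites $\tr\!\left(Q^{-1}X_iX_i^T\right)=\tr\!\left((Q^{-\frac12}X_i)^T(Q^{-\frac12}X_i)\right)=\|Q^{-\frac12}(t)\,X_i\|^2$, which is $(\ref{eqn2})$.

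For $(\ref{eqn1})$, let $N=d_1+\dots+d_n$ and let $A(t)\in\RR^{d\times N}$ be the matrix whose columns are the vectors $e^{t_i/2}\x_{i,l}$, $i\in[n]$, $l\in[d_i]$. Then $Q(t)=\sum_{i=1}^n e^{t_i}\sum_{l=1}^{d_i}\x_{i,l}\x_{i,l}^T=A(t)A(t)^T$, so by the Cauchy--Binet formula $\det Q(t)=\sum_J\left(\det A(t)_J\right)^2$, where $J$ runs over the $d$-element subsets of the $N$ columns and $A(t)_J$ is the corresponding $d\times d$ submatrix. The key bookkeeping observation is that such column subsets $J$ are in bijection with elements $S=(I,(S_i)_{i\in I})\in\s$: the set $I$ records which of the $n$ groups are hit, $S_i\subseteq[d_i]$ records which columns inside group $i$ are taken, and $|J|=d$ becomes $\sum_{i\in I}|S_i|=d$. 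Fixing such an $S$, if $X[S]$ denotes the $d\times d$ matrix with columns $\x_{i,l}$, $l\in S_i$, $i\in I$, then $A(t)_J=X[S]\cdot D$ with $D$ the $d\times d$ diagonal matrix carrying the entry $e^{t_i/2}$ in each position indexed by a column of group $i$; hence $\left(\det A(t)_J\right)^2=\left(\det X[S]\right)^2\prod_{i\in I}e^{|S_i|t_i}=\left(\det X[S]\right)^2\,e^{\sum_{\ell\in S}|S_\ell|t_\ell}$. Moreover $X[S]X[S]^T=\sum_{i\in I}\sum_{l\in S_i}\x_{i,l}\x_{i,l}^T=\sum_{l\in S}X[S_l]X[S_l]^T$, so $\left(\det X[S]\right)^2=\det\!\left(X[S]X[S]^T\right)=\Delta_S$. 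Summing over $S$ yields
\[
\det Q(t)=\sum_{S\in\s}e^{\sum_{\ell\in S}|S_\ell|t_\ell}\,\Delta_S,
\]
and $(\ref{eqn1})$ then follows by differentiating $\Phi(t)=\log\det Q(t)$ term by term, noting that $\frac{\partial}{\partial t_i}e^{\sum_{\ell\in S}|S_\ell|t_\ell}=|S_i|\,e^{\sum_{\ell\in S}|S_\ell|t_\ell}$, which vanishes precisely when $i\notin S$.

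The only mildly delicate point is the indexing in the Cauchy--Binet step — setting up the bijection between $d$-subsets of columns and elements of $\s$ and checking that the Gram matrix $X[S]X[S]^T$ of the chosen columns coincides with the matrix $\sum_{l\in S}X[S_l]X[S_l]^T$ used to define $\Delta_S$. Everything else is a routine computation.
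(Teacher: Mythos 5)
Your proof is correct and follows essentially the same route as the paper: Cauchy--Binet for $(\ref{eqn1})$ (the paper states the expansion $\det Q(t)=\sum_{S\in\s}e^{\sum_{\ell\in S}|S_\ell|t_\ell}\Delta_S$ in compact form, while you spell out the bijection between $d$-subsets of columns and elements of $\s$, which is the same bookkeeping), and a log-determinant derivative for $(\ref{eqn2})$ (the paper derives the Jacobi-type identity $\lim_{\epsilon\to0}\frac{\det(A+\epsilon B)-\det A}{\epsilon}=\det(A)\tr(A^{-1}B)$ from first principles rather than citing it, but the computation is identical).
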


\begin{proof} We begin by writing
\begin{equation}\label{eqn3}
    \det(Q(t))=\sum_{S\in\s} \det\left(\sum_{\ell\in S}e^{t_\ell}X[S_\ell]\cdot X[S_\ell]^T\right)=\sum_{S\in\s} e^{\sum_{\ell\in S}|S_\ell|t_\ell} \Delta_S.
\end{equation}

\noindent
The first equality follows from the Cauchy-Binet formula applied to $\det(\mathfrak{X} \mathfrak{X}^T)$ where $\mathfrak{X}:=[e^{{t_1\over 2}}X_1|...|e^{{t_n\over 2}}X_n]$ is the matrix whose columns are the columns of the matrices $e^{t_i \over 2}X_i$, $i \in [n]$. For the second equality note that each $\sum_{\ell\in S}e^{t_\ell}X[S_\ell] \cdot X[S_\ell]^T$ can be written as a sum of $d$ matrices of the form $e^{t_\ell}x  x^T$ with $x \in \RR^d$ a column of $X[S_\ell]$. This combined with the general formula $\det \left( \sum_{i=1}^d c_i x_i  x_i^T \right)=\left( \prod_{i=1}^d c_i\right) \det \left( \sum_{i=1}^d x_i  x_i^T \right)$ that holds for any $d$ vectors $x_1, \ldots, x_d$ in $\RR^d$ and positive coefficients $c_1, \ldots, c_d$, yields the second equality in $(\ref{eqn3})$.

It now follows from $(\ref{eqn3})$ that
\[\frac{\partial\Phi}{\partial t_i}(t)=\frac{1}{\det(Q(t))}\left(\sum_{\substack{S\in \s\\i\in S}} |S_i| e^{\sum_{\ell\in S}|S_\ell|t_\ell}\Delta_S\right),\]
and this proves $( \ref{eqn1})$. 

\smallskip
\noindent
To prove $(\ref{eqn2})$, we write
\[\frac{\partial \Phi}{\partial t_i}(t)=\frac{1}{\det(Q(t))}\frac{\partial}{\partial t_i}(\det(Q(t)))=\lim_{\varepsilon\to 0}\frac{\det(Q(t+\varepsilon \delta_i))-\det(Q(t))}{\varepsilon\det(Q(t))},\]
where $\delta_i$ is the $i$th standard basis vector of $\RR^n$. Next, we have that
\begin{align*}
    Q(t+\varepsilon\delta_i)&=\sum_{\ell=1}^n e^{t_\ell+\varepsilon \delta_{i,\ell}}X_\ell X_\ell^T=\sum_{\substack{\ell=1\\\ell\neq i}}^n e^{t_\ell}X_\ell X_\ell^T +e^{t_i+\varepsilon}X_iX_i^T\\
    &=\sum_{\substack{\ell=1\\\ell\neq i}}^n e^{t_\ell}X_\ell X_\ell^T +e^{t_i}X_iX_i^T\left(1+\varepsilon+\frac{\varepsilon^2}{2}+\ldots\right)=\sum_{\ell=1}^n e^{t_\ell}X_\ell X_\ell^T +\varepsilon e^{t_i}X_iX_i^T+\varepsilon^2Y_i,
\end{align*}
where $Y_i:=e^{t_i}X_iX_i^T(\frac{1}{2}+\frac{\varepsilon}{3!}+\ldots)$. Thus we get that
\[\det(Q(t+\varepsilon \delta_i))=\det(Q(t)+\varepsilon e^{t_i}X_iX_i^T)+\varepsilon^2 f_i(\varepsilon)\] for a suitable continuous function $f_i(\varepsilon)$. This yields
\begin{equation}\label{eqn4}
\begin{split}
    \frac{\partial \Phi}{\partial t_i}(t)&=\lim_{\varepsilon\to 0}\frac{\det(Q(t)+\varepsilon e^{t_i}X_iX_i^T)-\det(Q(t))}{\varepsilon\det(Q(t))}\\
    &=\frac{1}{\det(Q(t))}\lim_{\varepsilon\to 0}\frac{\det(Q(t)+\varepsilon e^{t_i}X_iX_i^T)-\det(Q(t))}{\varepsilon}.
\end{split}
\end{equation}
Finally, using the following differentiation formula (see \cite[Appendix A.2]{Art-AviKapHai-2020})
\begin{equation}\label{eqn5}
    \frac{\partial}{\partial B}\det(A):=\lim_{\varepsilon\to0}\frac{\det(A+\varepsilon B)-\det(A)}{\varepsilon}=\det(A) \Tr(A^{-1}B),
\end{equation}
which holds for all square matrices $A$ and $B$ with $A$ invertible, it follows from $(\ref{eqn4})$ and $(\ref{eqn5})$ that
\begin{align*}
    \frac{\partial\Phi}{\partial t_i}(t)&=\Tr(Q(t)^{-1}e^{t_i}X_iX_i^T)=e^{t_i} \Tr(Q(t)^{-\frac{1}{2}}X_i(Q(t)^{-\frac{1}{2}} X_i)^T)=e^{t_i}\|Q(t)^{-\frac{1}{2}}X_i\|,
\end{align*}
and this completes the proof.
\end{proof}

To prove Theorem \ref{RIF-thm-2} we will also need the following result from \cite{ChiDer-2019}.

\begin{theorem}[\textbf{Gaussian-extremizers}] \cite[Theorems 22]{ChiDer-2019}\label{Calin-Harm-thm} Let $Q$ be a bipartite quiver with set of sink vertices $Q^{-}_0=\{1, \ldots, n\}$,  $\dd$ a dimension vector of $Q$,  and $\cc=(c_1, \ldots, c_n) \in \QQ^n_{>0}$ an $n$-tuple of positive rational numbers such that 
$$
\sigma_{\cc} \cdot \dd=0.
$$
Let $V \in \rep(Q, \dd)$ be a $\dd$-dimensional representation and consider the real algebraic set 
$$
\BL_{\cc}(V)=\{A \in \GL(\dd) \mid (A \cdot V,\cc) \text{~is a geometric BL quiver datum}\}.
$$
If $\BL_{\cc}(V)$ is not empty then $(V, \cc)$ is gaussian-extremizable and its gaussian extremizers are the $n$-tuples of matrices
$$
(A(i)^T  A(i))_{i \in [n]} \text{~with~}A=(A(x))_{x\in Q_0} \in \BL_{\cc}(V). 
$$
\end{theorem}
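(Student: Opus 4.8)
The plan is to read off ``$(V,\cc)$ is gaussian-extremisable'' directly from Theorem~\ref{Calin-Harm-thm-cap}, and then to identify the gaussian extremisers by a first-order analysis of the capacity functional combined with its behaviour under the $\GL(\dd)$-action. Throughout, for $Y=(Y_i)_{i\in[n]}$ with $Y_i\in\mathcal S^{+}_{\dd(i)}$ write $P_j(Y):=\sum_{i=1}^n c_i\sum_{a\in\ar_{j,i}}V(a)^T Y_i V(a)$ and
\[
G_V(Y):=\frac{\prod_{j=1}^m\det P_j(Y)}{\prod_{i=1}^n\det(Y_i)^{c_i}},
\]
so that $\capa(V,\cc)=\inf_Y G_V(Y)$. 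Since $\BL_{\cc}(V)\neq\emptyset$ is exactly condition (ii) in Theorem~\ref{Calin-Harm-thm-cap}(2), that theorem already gives that $V$ is $\cc$-polystable, hence $\cc$-semi-stable, hence $\capa(V,\cc)>0$ by Theorem~\ref{Calin-Harm-thm-cap}(1), and that $(V,\cc)$ is gaussian-extremisable; this settles the first assertion. Note also that $\cc$-semi-stability forces $P_j(Y)\succ0$ for every positive definite $Y$: if $P_j(Y)v=0$ then $V(a)v=0$ for all $a\in\ar_{j,i}$ and all $i$, so the line $\RR v\subseteq V(\overline j)$ spans a subrepresentation $W'$ with $\sigma_{\cc}\cdot\ddim W'=\omega>0$, contradicting $\sigma_{\cc}$-semi-stability.

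Next I would record how the capacity functional transforms. For $A=(A(x))_{x\in Q_0}\in\GL(\dd)$ one has $(A\cdot V)(a)=A(i)\,V(a)\,A(\overline j)^{-1}$ whenever $a\in\ar_{j,i}$, and substituting $Z_i'=A(i)^{-T}Z_iA(i)^{-1}$ into $\sum_i c_i\sum_{a\in\ar_{j,i}}(A\cdot V)(a)^T Z_i'(A\cdot V)(a)$ collapses all the $A(i)$'s and produces $A(\overline j)^{-T}\bigl(\sum_i c_i\sum_{a\in\ar_{j,i}}V(a)^T Z_i V(a)\bigr)A(\overline j)^{-1}$. Taking determinants and collecting factors yields
\[
G_V(Z)=\kappa(A)\cdot G_{A\cdot V}\bigl((A(i)^{-T}Z_i A(i)^{-1})_{i\in[n]}\bigr),\qquad
\kappa(A):=\frac{\prod_{j=1}^m\det(A(\overline j))^2}{\prod_{i=1}^n\det(A(i))^{2c_i}}>0,
\]
for all positive definite $Z$. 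In particular $Z$ minimises $G_V$ iff $(A(i)^{-T}Z_iA(i)^{-1})_i$ minimises $G_{A\cdot V}$, and $\capa(V,\cc)=\kappa(A)\,\capa(A\cdot V,\cc)$.

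Then comes the stationarity step. Differentiating $\log G_V$ along symmetric perturbations of each $Y_i$ shows that a positive definite $Y$ is a critical point of $\log G_V$ precisely when
\[
\sum_{j=1}^m\sum_{a\in\ar_{j,i}}V(a)\,P_j(Y)^{-1}\,V(a)^T=Y_i^{-1}\qquad\forall\, i\in[n],
\]
a condition I will call $(\star)$. First, let $(Y_i)$ be any gaussian extremiser of $(V,\cc)$; being an interior global minimiser it satisfies $(\star)$. Put $A(i):=Y_i^{1/2}$ for $i\in[n]$ and $A(\overline j):=P_j(Y)^{1/2}$ for $j\in[m]$ (legitimate since $P_j(Y)\succ0$). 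One checks at once that $\sum_i c_i\sum_{a\in\ar_{j,i}}(A\cdot V)(a)^T(A\cdot V)(a)=\Id_{\dd(\overline j)}$ by the choice of $A(\overline j)$, while $\sum_j\sum_{a\in\ar_{j,i}}(A\cdot V)(a)(A\cdot V)(a)^T=A(i)\bigl(\sum_j\sum_{a\in\ar_{j,i}} V(a)P_j(Y)^{-1}V(a)^T\bigr)A(i)^T=A(i)\,Y_i^{-1}A(i)^T=\Id_{\dd(i)}$ by $(\star)$. Hence $A\in\BL_{\cc}(V)$ with $A(i)^TA(i)=Y_i$, so every gaussian extremiser has the asserted form. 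Conversely, given $A\in\BL_{\cc}(V)$, the datum $W:=A\cdot V$ is a geometric BL quiver datum, and by the transformation law $(A(i)^TA(i))_i$ is a gaussian extremiser of $(V,\cc)$ iff $(\Id_{\dd(i)})_i$ is a gaussian extremiser of $(W,\cc)$. Since $G_W(\Id)=\prod_j\det(\Id_{\dd(\overline j)})=1$ by the first defining identity of a geometric BL datum, this is equivalent to $G_W(Y)\ge1$ for all positive definite $Y$, i.e.\ to $\capa(W,\cc)=1$.

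This last inequality — the geometric Brascamp--Lieb inequality for quiver data, the $n$-subspace case being classical (cf.\ \cite{BenCarChrTao-2008}) — is the one genuinely analytic ingredient, and is where I expect the real work to lie; everything else is bookkeeping with the $\GL(\dd)$-action together with a routine differentiation. One route to it is via the quiver analogue of Lieb's theorem (reducing the Brascamp--Lieb constant to Gaussian test objects) followed by the explicit evaluation $\capa(W,\cc)=1$; a more structural route is to observe that $Y\mapsto\log G_V(Y)$ is geodesically convex on $\prod_{i\in[n]}\mathcal S^{+}_{\dd(i)}$ with its natural Riemannian metric, so that any solution of $(\star)$ — in particular $(\Id_{\dd(i)})_i$ for a geometric BL datum — is automatically a global minimiser. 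Either way one concludes that $\capa(V,\cc)=\kappa(A)$ for every $A\in\BL_{\cc}(V)$ and that the minimum of $G_V$ is attained exactly along the tuples $(A(i)^TA(i))_{i\in[n]}$, which is the desired characterisation.
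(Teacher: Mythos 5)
You should first note that the paper itself contains no proof of Theorem \ref{Calin-Harm-thm}: it is imported verbatim from \cite[Theorem 22]{ChiDer-2019}, so your attempt has to be judged as a self-contained argument rather than against an argument in this paper. Within the paper's logical order, most of your reductions are legitimate and correct: deducing gaussian-extremisability from Theorem \ref{Calin-Harm-thm-cap}(2) (since $\BL_{\cc}(V)\neq\emptyset$ is exactly condition (ii) there), the positivity of $P_j(Y)$ via $\cc$-semi-stability, the transformation law $G_V(Z)=\kappa(A)\,G_{A\cdot V}\bigl((A(i)^{-T}Z_iA(i)^{-1})_i\bigr)$, the Euler--Lagrange condition $(\star)$, and the construction $A(i)=Y_i^{1/2}$, $A(\overline j)=P_j(Y)^{1/2}$ showing that every gaussian extremiser is of the form $(A(i)^TA(i))_i$ with $A\in\BL_{\cc}(V)$. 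That settles the first assertion and one of the two inclusions in the description of the extremisers.

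The genuine gap is the reverse inclusion: that for an \emph{arbitrary} $A\in\BL_{\cc}(V)$ the tuple $(A(i)^TA(i))_i$ really attains the infimum. As you yourself observe, this is equivalent to $\capa(W,\cc)=1$, i.e.\ $G_W(Y)\geq 1$ for every positive definite $Y$, whenever $W$ is a geometric BL quiver datum -- the geometric quiver Brascamp--Lieb inequality -- and you only gesture at two possible proofs (a quiver analogue of Lieb's theorem, or geodesic convexity of $\log G_V$ on the product of positive definite cones) without carrying either out. Nothing already available closes this: Theorem \ref{Calin-Harm-thm-cap} gives the existence of \emph{some} extremiser, and by your forward direction $\capa(V,\cc)=\kappa(B)$ for \emph{some} $B\in\BL_{\cc}(V)$, but to conclude for a general $A\in\BL_{\cc}(V)$ you would still need $\kappa(A)=\kappa(B)$ for all $A,B\in\BL_{\cc}(V)$, which is precisely equivalent to the unproven inequality (one could try to get it from rigidity of transformations between geometric data, e.g.\ \cite[Theorem 21]{ChiDer-2019}, but that is again external input, not bookkeeping). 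Likewise, geodesic convexity of $Y\mapsto\log\det\bigl(\sum_i c_i\sum_{a\in\ar_{j,i}}V(a)^TY_iV(a)\bigr)$ in the affine-invariant metric is a real theorem from the operator-scaling literature, not a routine observation, so invoking it requires a proof or a precise citation. In short: your argument correctly reduces the theorem to this one analytic statement, but that statement is the substantive content of \cite[Theorem 22]{ChiDer-2019}, and as written your proposal does not prove it.
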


We are now ready to prove Theorem \ref{RIF-thm-2}.

\begin{proof}[Proof of Theorem \ref{RIF-thm-2}]
For the implication $(1)\Longrightarrow(2)$, we know that $\Phi(t)-\langle t,c\rangle$ has a minimum at $t^*=(t^*_1, \ldots, t^*_n)$ which implies that 
\[\frac{\partial\Phi}{\partial t_i}(t^*)=c_i,\forall i\in[n].\]
This combined with $(\ref{eqn1})$ in Lemma \ref{lemma1-CARI}, proves that
\[(e^{t_1^*},\ldots,e^{t_n^*})\in\mathbb{V}(\F,c).\]

\smallskip
\noindent
For the other implication $(2)\Longrightarrow (1)$, since $(e^{t_1^*},\ldots,e^{t_n^*})\in\mathbb{V}(\F,c)$, we get via Lemma $\ref{lemma1-CARI}$ that
\begin{equation}\label{eqn6}
e^{t_i^*}\|Q^{-\frac{1}{2}(t^*)}X_i\|^2=c_i,\forall i\in[n].
\end{equation}

\noindent
Thus we obtain that
\begin{align*}
    I_d=Q^{-\frac{1}{2}}(t^*) Q(t^*) Q^{-\frac{1}{2}}(t^*)
    &=\sum_{i=1}^n e^{t_i^*} Q^{-\frac{1}{2}}(t^*) X_i X_i^T Q^{-\frac{1}{2}}(t^*)\\
    &\stackrel{(\ref{eqn6})}{=}\sum_{i=1}^n \frac{c_i}{\|Q^{-\frac{1}{2}}(t^*)X_i\|^2}\left(Q^{-\frac{1}{2}}(t^*) X_i \right) \left(Q^{-\frac{1}{2}}(t^*)  X_i\right)^T.
\end{align*}
This shows that $Q^{-\frac{1}{2}}(t^*)$ transforms $(\F,\cc)$ into a RIF. But this is equivalent to saying that $(A \cdot V_\F,\cc)$ is a geometric BL quiver datum over our quiver $\Q$ from $(\ref{quiver-eqn-1})$ where
\[A:=\left(Q^{\frac{1}{2}}(t^*),\frac{1}{\|Q^{-\frac{1}{2}}(t^*)X_1\|},\ldots,\frac{1}{\|Q^{-\frac{1}{2}}(t^*)X_n\|}\right)\in\GL(\dd).\]

\noindent
At this point, we can use Theorem \ref{Calin-Harm-thm} to conclude that 
$(\xi_1,\ldots,\xi_n)$ with  
$$
\xi_i=A(i)^T A(i)=\frac{1}{\|Q^{-\frac{1}{2}}(t^*)X_i\|^2}, i\in[n],
$$ 
is a gaussian-extremizer for $(V_\F,\cc)$. This is further equivalent to saying that $f_\F(\cc)$ is finite and attained at $(t_1,\ldots,t_n)$ where
\[e^{t_i}=c_i\xi_i=\frac{c_i}{\|Q^{-\frac{1}{2}}(t^*)X_i\|^2}\stackrel{(\ref{eqn6})}{=}e^{t_i^*},\forall i\in[n].\]
This shows that $f_\F(\cc)$ is indeed attained at $t^*$, and this completes the proof.
\end{proof}

\begin{rmk} When $d_1=\ldots=d_n=1$, Theorem \ref{RIF-thm-2} was first proved in \cite{Bar-1998} (see also \cite[Appendix A]{Art-AviKapHai-2020}). We point out that the key ingredients used in \cite[Lemma A.4]{Art-AviKapHai-2020} do not seem to extend to our set-up, making Theorem \ref{Calin-Harm-thm} indispensable for the general case of matrix frames.
\end{rmk}

\section{Quiver Radial Isotropy and $\sigma$-critical quiver representations}
Throughout this section, unless otherwise specified, the only topology we work with is the Euclidean topology.

Let $Q=(Q_0,Q_1,t,h)$ be a connected acyclic quiver, $\dd \in \ZZ_{\geq 0}^{Q_0}$ a dimension vector, and $V \in \rep(Q,\dd)$ a $\dd$-dimensional representation of $Q$.

\begin{definition}[\textbf{The orbit cone of a representation}]\label{orbit-cone-defn} The \emph{orbit cone} of $V$ is the rational convex polyhedral cone defined by
$$
\Omega(V):=\{\sigma \in \RR^{Q_0} \mid \sigma \cdot \ddim \V=0 \text{~and~} \sigma \cdot \ddim \V' \leq 0,~ \forall \V' \subseteq \V\}.
$$
\end{definition}

\begin{rmk} The terminology ``orbit cone" is justified by the fact that the faces of $\Omega(V)$ are precisely the orbit cones of the form $\Omega(W)$ with $W \in \overline{\GL(\dd)V}$ (see for example \cite{CC5}). 
\end{rmk}

\noindent
Our goal is to give a representation-theoretic interpretation (see Theorem \ref{quiver-RadIso-thm} below) of the relative interior points of these cones in terms of the so-called critical quiver representations. 

\begin{definition}[\textbf{Critical representations}] \label{crit-rep-defn} Let $\sigma \in \ZZ^{Q_0}$ be a weight of $Q$. 
\begin{enumerate}[(i)]
\item We say that $V$ is a \emph{$\sigma$-critical representation} if $V$ satisfies the following matrix equations \\
\begin{equation*}
\sum_{a \in Q_1, ta=x} V(a)^T \cdot V(a)-\sum_{a \in Q_1, ha=x} V(a) \cdot V(a)^T=\sigma(x) \Id_{\dd(x)}, \forall x \in Q_0. 
\end{equation*}

\item We say that a transformation $A \in \GL(\dd)$ \emph{puts $V$ in $\sigma$-critical position} if $A \cdot V$ is a $\sigma$-critical representation.
\end{enumerate}
\end{definition}

\begin{remark}[\textbf{Geometric BL quiver data and critical representations}]\label{geom-BL-critical-reps} We point out that after appropriately scaling representations, geometric BL quiver data become critical quiver representations. Indeed, assume that $Q$ is a bipartite quiver with $n$ sink vertices and let $V \in \rep(Q, \dd)$ be a $\dd$-dimensional representation of $Q$. Let $\cc \in \QQ^n_{>0}$ be a rational vector and consider the weight $\sigma_{\cc}$ defined via $(\ref{eqn-induced-wt})$. Let $W \in \rep(Q,\dd)$ be the representation defined by 
$$W(a):=\sqrt{-\sigma_{\cc}(ha)}V(a), \forall a \in Q_1.
$$ 
It is then immediate to see that $(V,  \cc)$ is a geometric BL quiver datum if and only if  $W$ is a $\sigma_{\cc}$-critical representation. 

Consequently, we get that a quiver datum $(V, \cc)$ can be transformed into a geometric BL quiver datum if and only if $V$ can be put into $\sigma_{\cc}$-critical position.
\end{remark}

Recall that, given a weight $\sigma \in \ZZ^{Q_0}$, we say that $V$ is $\sigma$-polystable if $V$ is a direct sum of $\sigma$-stable representations. We have the following important result which was first proved by King in \cite{K} over the field of complex numbers. As explained in \cite{ChiDer-2019}, it also holds over the field of real numbers. In what follows, $\GL(\dd)_{\sigma}$ denotes the kernel of the character induced by $\sigma$, i.e.
$$
\GL(\dd)_{\sigma}=\left \{A=(A(x))_{x \in Q_0} \in \GL(\dd) \;\middle|\; \prod_{x \in Q_0} \det(A(x))^{\sigma(x)}=1 \right \}.
$$

\begin{prop}\label{poly-stable-prop-app}(see \cite[Proposition 14 and Remark 15]{ChiDer-2019}) Let $\sigma \in \ZZ^{Q_0}$ be a weight such that $V \in \rep(Q, \dd)$ is $\sigma$-semi-stable. Then the following statements are equivalent:
\begin{enumerate}
\item $V$ is $\sigma$-polystable;

\item the $\GL(\dd)_{\sigma}$-orbit of $V$ is closed in $\rep(Q,\dd)$;

\item $V$ can be put in $\sigma$-critical position.
\end{enumerate}
\end{prop}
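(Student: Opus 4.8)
The plan is to prove the two equivalences $(1)\Leftrightarrow(2)$ and $(2)\Leftrightarrow(3)$ separately: the first by geometric invariant theory for the reductive group $\GL(\dd)_\sigma$ acting on the affine space $\rep(Q,\dd)$, following King's argument, and the second by the real form of Kempf--Ness theory. Throughout one uses the standing hypothesis that $V$ is $\sigma$-semi-stable, in particular that $\sigma\cdot\ddim V=0$.

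For $(1)\Leftrightarrow(2)$ I would argue as follows. The category $\rep(Q)^{ss}_\sigma$ of $\sigma$-semi-stable representations is abelian, its simple objects are exactly the $\sigma$-stable ones, and every object has a Jordan--H\"older filtration $0=V_0\subsetneq V_1\subsetneq\cdots\subsetneq V_\ell=V$ with $\sigma$-stable quotients; in particular $\sigma\cdot\ddim(V_k/V_{k-1})=0$ for every $k$. Splitting this filtration vertexwise yields a one-parameter subgroup $\lambda\colon\RR^\times\to\GL(\dd)$ with $\lim_{t\to 0}\lambda(t)\cdot V\cong\gr(V):=\bigoplus_k V_k/V_{k-1}$, and because every factor satisfies $\sigma\cdot\ddim(\cdot)=0$ the character $\chi_\sigma=\prod_{x\in Q_0}\det(\cdot)^{\sigma(x)}$ is trivial on $\operatorname{Im}\lambda$, so $\lambda$ in fact lands in $\GL(\dd)_\sigma$ and $\gr(V)\in\overline{\GL(\dd)_\sigma\cdot V}$. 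This gives $(2)\Rightarrow(1)$ immediately: a closed orbit forces $V\cong\gr(V)$, and a representation isomorphic to its own semisimplification is a direct sum of $\sigma$-stables. For $(1)\Rightarrow(2)$, write $V=\bigoplus_i V_i$ with $V_i$ $\sigma$-stable; any $W$ in $\overline{\GL(\dd)_\sigma\cdot V}$ is again $\sigma$-semi-stable and lies in the same fibre of the (real) GIT quotient as $V$, hence is $S$-equivalent to $V$, so $\gr(W)\cong\gr(V)=V$. Since $\gr(W)$ is a degeneration of $W$ by the construction above, $\dim(\GL(\dd)_\sigma\cdot W)\le\dim(\GL(\dd)_\sigma\cdot V)$ with equality only if $W\cong V$; as this holds for every boundary point, $\GL(\dd)_\sigma\cdot V$ is closed.

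For $(2)\Leftrightarrow(3)$ I would invoke Kempf--Ness theory over $\RR$ for the $\GL(\dd)$-action on $\rep(Q,\dd)$ linearized by the character $\chi_\sigma$, with maximal compact $K=\prod_{x\in Q_0}O(\dd(x))$ and moment map $\mu\colon\rep(Q,\dd)\to\prod_{x\in Q_0}\operatorname{Sym}(\dd(x))$ sending $V$ to the tuple $\big(\sum_{a:ta=x}V(a)^T V(a)-\sum_{a:ha=x}V(a)V(a)^T\big)_{x\in Q_0}$; under the trace pairing the coadjoint vector dual to the character $\sigma$ is exactly $(\sigma(x)\Id_{\dd(x)})_{x\in Q_0}$. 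The Kempf--Ness/Richardson--Slodowy theorem then states that the $\GL(\dd)_\sigma$-orbit of $V$ is closed if and only if the norm function $g\mapsto\|g\cdot V\|^2$ attains a minimum on it, if and only if the $\GL(\dd)$-orbit of $V$ meets $\mu^{-1}\big((\sigma(x)\Id_{\dd(x)})_x\big)$; and a representation lies in this fibre precisely when it satisfies the matrix equations of Definition \ref{crit-rep-defn}, i.e.\ is $\sigma$-critical. Hence $(2)$ holds if and only if some $A\cdot V$ is $\sigma$-critical, which is $(3)$.

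The step I expect to be the main obstacle is the moment-map bookkeeping in the second equivalence: one must make sure the correct Kempf--Ness function is the one attached to the $\chi_\sigma$-linearization rather than the bare norm on $\rep(Q,\dd)$, so that the critical-point equation comes out as $\mu(V)=(\sigma(x)\Id_{\dd(x)})_x$ exactly, not merely $\mu(V)\in\RR\cdot(\sigma(x)\Id_{\dd(x)})_x$ (no uniform rescaling of the arrows is available inside $\GL(\dd)_\sigma$ for a general acyclic $Q$, so the scalar is not free to normalize), and that the real Cartan decomposition of $\GL(\dd,\RR)$ genuinely supports the real version of the theorem. A secondary point to check carefully is that the one-parameter subgroup degenerating $V$ to $\gr(V)$ can indeed be taken inside $\GL(\dd)_\sigma$ --- this is precisely where the vanishing of $\sigma\cdot\ddim(\cdot)$ on the Jordan--H\"older factors of a semi-stable representation enters.
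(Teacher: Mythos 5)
The paper does not give its own proof of this proposition --- it is imported verbatim from \cite[Proposition~14 and Remark~15]{ChiDer-2019} --- and your blind reconstruction is essentially the argument carried out there: King's $S$-equivalence/orbit-closure mechanism for the equivalence of (1) and (2), and the real (Richardson--Slodowy) form of Kempf--Ness theory for the equivalence of (2) and (3). Two of the points you flag as requiring care are indeed the places where the reference does real work. First, the claim that a boundary point $W\in\overline{\GL(\dd)_\sigma\cdot V}$ is again $\sigma$-semi-stable is true but deserves a word: the semi-invariants of positive $\chi_\sigma$-weight that witness semi-stability are $\GL(\dd)_\sigma$-invariant functions, hence constant along the orbit closure, so $W$ cannot drop into the unstable locus. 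Second, and more seriously, the Kempf--Ness criterion applied to $\GL(\dd)_\sigma$ a priori only gives $\mu(g_0\cdot V)=c\,(\sigma(x)\Id_{\dd(x)})_x$ for an undetermined $c\in\RR$; as you correctly observe, for a non-bipartite acyclic $Q$ there is in general no one-parameter scaling subgroup that rescales all arrows uniformly, so passing to the exact equality $\mu(A\cdot V)=(\sigma(x)\Id_{\dd(x)})_x$ for some $A\in\GL(\dd)$ needs an additional conjugation argument --- in \cite{ChiDer-2019} this is absorbed into the capacity/Gaussian-extremiser formulation rather than handled directly on the raw norm function. With those two points spelled out, your proposal is correct in structure and follows the same route as the cited source.
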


Recall that a representation $V$ is said to be locally semi-simple if there exists a weight $\sigma_0 \in \ZZ^{Q}$ such that $V$ is a $\sigma_0$-polystable representation. For a rational convex polyhedral cone $C$, we denote its relative interior by $\rel(C)$.

Our next result, which plays a key role in the proof of Theorem \ref{RIF-thm-1}, can be viewed as a far reaching generalization of Barthe's theorem on vectors in radial isotropic position.

\begin{theorem}[\textbf{Quiver Radial Isotropy}]\label{quiver-RadIso-thm} Assume that $V \in \rep(Q, \dd)$ is a locally semi-simple representation and let $\sigma \in \ZZ^{Q_0}$ be a weight of $Q$. Then the following statements are equivalent
\begin{enumerate}
\item $\sigma \in \rel(\Omega(V))$;

\item $V$ can be put in $\sigma$-critical position.
\end{enumerate}
\end{theorem}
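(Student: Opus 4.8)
The plan is to deduce the theorem from Proposition \ref{poly-stable-prop-app} together with two standard inputs: first, the fact (due to King, \cite{K}) that the category $\rep(Q)^{ss}_{\sigma}$ of $\sigma$-semistable representations $V$ with $\sigma\cdot\ddim V=0$ is abelian with simple objects exactly the $\sigma$-stable ones; and second, the elementary description of relative interiors of $\Omega(V)$: since $\Omega(V)$ is cut out by the equation $\sigma\cdot\ddim V=0$ and the inequalities $\sigma\cdot\ddim W\le 0$ over all $W\subseteq V$, a weight $\sigma\in\Omega(V)$ lies in $\rel(\Omega(V))$ if and only if every subrepresentation $W\subseteq V$ with $\sigma\cdot\ddim W=0$ satisfies $\sigma'\cdot\ddim W=0$ for all $\sigma'\in\Omega(V)$. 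Both implications then reduce to recognizing $\sigma$-polystability, and I will freely use that $\Omega(A\cdot V)=\Omega(V)$ for $A\in\GL(\dd)$.

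For $(2)\Rightarrow(1)$ I would, after replacing $V$ by $A\cdot V$, assume $V$ itself is $\sigma$-critical, and then compute with orthogonal projections. Given $W\subseteq V$, let $P_x$ be the orthogonal projection of $V(x)$ onto $W(x)$; multiplying the $\sigma$-critical equations by $P_x$, taking traces and summing over $x\in Q_0$, the left side regroups as $\sum_{a\in Q_1}\bigl(\|V(a)P_{ta}\|_F^2-\|P_{ha}V(a)\|_F^2\bigr)$, so this equals $\sigma\cdot\ddim W$. Taking $W=V$ yields $\sigma\cdot\ddim V=0$; for general $W$ the relation $P_{ha}V(a)P_{ta}=V(a)P_{ta}$ (a restatement of $W$ being a subrepresentation) gives $\|V(a)P_{ta}\|_F\le\|P_{ha}V(a)\|_F$, hence $\sigma\cdot\ddim W\le 0$ with equality iff $P_{ha}V(a)(I-P_{ta})=0$ for every $a$, i.e. iff $W^{\perp}:=(W(x)^{\perp})_{x\in Q_0}$ is a subrepresentation and $V=W\oplus W^{\perp}$. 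Thus $V$ is $\sigma$-semistable, so $\sigma\in\Omega(V)$; and whenever $\sigma\cdot\ddim W=0$, the inequalities $\sigma'\cdot\ddim W\le 0$, $\sigma'\cdot\ddim W^{\perp}\le 0$ together with $\sigma'\cdot\ddim W+\sigma'\cdot\ddim W^{\perp}=\sigma'\cdot\ddim V=0$ force $\sigma'\cdot\ddim W=0$ for every $\sigma'\in\Omega(V)$. By the relative-interior criterion, $\sigma\in\rel(\Omega(V))$.

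For $(1)\Rightarrow(2)$ I would argue as follows. Since $\sigma\in\Omega(V)$, $V$ is $\sigma$-semistable, so by Proposition \ref{poly-stable-prop-app} it suffices to show $V$ is $\sigma$-polystable. Local semi-simplicity provides $\sigma_0\in\ZZ^{Q_0}$ with $V$ $\sigma_0$-polystable; in particular $V$ is $\sigma_0$-semistable, so $\sigma_0\in\Omega(V)$. Now take any $W\subseteq V$ with $\sigma\cdot\ddim W=0$. Because $\sigma\in\rel(\Omega(V))$, the criterion above forces $\sigma_0\cdot\ddim W=0$, so $W$ is a subrepresentation of the $\sigma_0$-semistable $V$ with $\sigma_0\cdot\ddim W=0$, hence itself $\sigma_0$-semistable, i.e. a subobject of $V$ in $\rep(Q)^{ss}_{\sigma_0}$. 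As $V$ is a semisimple object of that category, $W$ is a direct summand of $V$ as a representation; its complement is again $\sigma$-semistable with $\sigma\cdot\ddim=0$, so this is also a direct-sum decomposition in $\rep(Q)^{ss}_{\sigma}$. Therefore every subobject of $V$ in $\rep(Q)^{ss}_{\sigma}$ is a direct summand, and a finite-length object with this property is semisimple (a maximal semisimple subobject $M$ has a complement $N$; if $N\neq 0$ it contains a simple subobject $S$, and $M\oplus S$ is a strictly larger semisimple subobject, contradicting maximality). Hence $V$ is $\sigma$-polystable, and Proposition \ref{poly-stable-prop-app} puts $V$ in $\sigma$-critical position.

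The main obstacle is the implication $(1)\Rightarrow(2)$: the crux is to use ``$\sigma\in\rel(\Omega(V))$'' to upgrade the purely combinatorial condition ``$\sigma\cdot\ddim W=0$'' into the structural condition ``$\sigma_0\cdot\ddim W=0$'', which is exactly what makes such a $W$ an honest subobject of the semisimple object $V$ in $\rep(Q)^{ss}_{\sigma_0}$ and hence a direct summand. Local semi-simplicity is used precisely here, to supply a $\sigma_0$ with $\sigma_0\in\Omega(V)$ at which $V$ is semisimple, and it is genuinely needed. One should also take care that Proposition \ref{poly-stable-prop-app} is invoked only after $\sigma$-semistability of $V$ is already established, so that no circularity arises, and that the ``every subobject splits $\Rightarrow$ semisimple'' step is applied in the correct category $\rep(Q)^{ss}_{\sigma}$.
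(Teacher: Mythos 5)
Your proof is correct, and it takes a genuinely different route from the paper in both directions. For $(2)\Rightarrow(1)$, you give a direct computation: multiplying the $\sigma$-critical matrix equations by the orthogonal projections onto $W(x)$, taking traces and summing over vertices shows $\sigma\cdot\ddim W=\sum_a\bigl(\|V(a)P_{ta}\|_F^2-\|P_{ha}V(a)\|_F^2\bigr)\le 0$, with equality forcing $W^\perp$ to be a subrepresentation splitting off $W$; from there the relative-interior criterion follows by comparing $\sigma'\cdot\ddim W$ and $\sigma'\cdot\ddim W^\perp$. The paper instead passes through Proposition \ref{poly-stable-prop-app} to get $\sigma$-polystability, then runs a degeneration argument (Lemma \ref{lemma-1-app}) and uses closedness of the $\GL(\dd)_\sigma$-orbit to produce the splitting $V'\oplus V/V'\simeq V$; both proofs of this implication are correct and neither uses local semi-simplicity. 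For $(1)\Rightarrow(2)$, the paper complexifies the representation, invokes King's GIT over $\CC$ and the Hoskins--Schaffhauser transfer (Proposition \ref{reals-complex-prop-app}) to show $\gr_\sigma(V_\CC)\in G V_\CC$, and then descends to $\RR$ via Krull--Schmidt (Remark \ref{rmk-iso-cplex-real}). You avoid complexification entirely: the relative-interior criterion forces $\sigma_0\cdot\ddim W=0$ for any destabilizing-to-equality $W$, so $W$ is a subobject of the semisimple object $V$ of $\rep(Q)^{ss}_{\sigma_0}$, hence splits off, and the complement is again a subobject in $\rep(Q)^{ss}_{\sigma}$; the standard ``every subobject splits $\Rightarrow$ semisimple'' argument for finite-length objects then gives $\sigma$-polystability, and Proposition \ref{poly-stable-prop-app} finishes. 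Your argument is more elementary in that it stays inside $\rep(Q)$ over $\RR$ and uses only the abelian-category structure of semistable subcategories plus the polyhedral geometry of $\Omega(V)$, at the price of no extra information; the paper's route is heavier but yields the structural statement $\gr_\sigma(V_\CC)\in G V_\CC$ as a byproduct and fits into a framework (complexification plus rational points of moduli) that the authors reuse elsewhere.
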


To prove this theorem, we require several auxiliary results. We begin by recalling the following simple, well-known folklore lemma. It is implicit in the work of King on moduli spaces of quiver representations \cite{K}.

\begin{lemma} \label{lemma-1-app} Let $V \in \rep(Q, \dd)$ be a $\dd$-dimensional representation, $\sigma \in \ZZ^{Q_0}$ a weight such that $\sigma \cdot \dd=0$. Let
$$
0=V_0\subseteq V_1 \subseteq \ldots \subseteq V_n=V
$$ 
be a filtration of subrepresentations of $V$ such that $\sigma \cdot \ddim V_i=0$ for $i \in \{0, \ldots, n \}$. Then there exists a $1$-parameter subgroup $\lambda: \RR^{\times} \to \GL(\dd)$ such that
$$
\lambda(t) \in \GL(\dd)_{\sigma}, \forall t \in \RR^{\times}, \text{~and~} \lim_{t \to 0} \lambda(t) V \simeq \bigoplus_{i=1}^n V_i/V_{i-1}.
$$ 
Consequently, $\bigoplus_{i=1}^n V_i/V_{i-1}$ belongs to the closure of the $\GL(\dd)_{\sigma}$-orbit of $V$.
\end{lemma}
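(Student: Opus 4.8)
The plan is to produce $\lambda$ explicitly as a one-parameter subgroup of the maximal torus attached to a basis adapted to the filtration, and then to check the two assertions (the limit, and membership in $\GL(\dd)_\sigma$) by direct computation. First I would choose, at every vertex $x \in Q_0$, a basis of $V(x)$ compatible with the flag $0 = V_0(x) \subseteq V_1(x) \subseteq \cdots \subseteq V_n(x) = V(x)$, so that the first $\dim V_i(x)$ basis vectors span $V_i(x)$ for each $i$. This identifies $V(x)$ with a direct sum $\bigoplus_{p=1}^n G_p(x)$ of graded pieces $G_p(x) \cong (V_p/V_{p-1})(x)$. Because each $V_p$ is a subrepresentation, every structure map $V(a)$, $a \in Q_1$, sends $V_q(ta)$ into $V_q(ha)$, and hence in block form its $(p,q)$ block vanishes whenever $p > q$; it is block upper triangular with respect to the ordering $G_1, \ldots, G_n$, with diagonal block $V(a)_{pp}$ equal to the induced map on $V_p/V_{p-1}$.

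Next I would fix any strictly decreasing sequence of integers $r_1 > r_2 > \cdots > r_n$ (for instance $r_p = -p$) and define $\lambda(t)(x) \in \GL(\dd(x))$ to be the block-scalar matrix acting on $G_p(x)$ as multiplication by $t^{r_p}$, for each $x \in Q_0$ and $t \in \RR^\times$; this is visibly a homomorphism $\RR^\times \to \GL(\dd)$. Since $(\lambda(t)\cdot V)(a) = \lambda(t)(ha)\,V(a)\,\lambda(t)(ta)^{-1}$ has $(p,q)$ block $t^{r_p - r_q}V(a)_{pq}$, the diagonal blocks are unchanged while every nonzero off-diagonal block (necessarily $p < q$, hence $r_p - r_q > 0$) is scaled by a positive power of $t$. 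Letting $t \to 0$ the off-diagonal blocks vanish, so $\lim_{t\to 0}\lambda(t)\cdot V$ has structure maps equal to the block-diagonal parts, i.e. it is isomorphic to $\bigoplus_{i=1}^n V_i/V_{i-1}$; note that the associated graded has dimension vector $\sum_{i}(\ddim V_i - \ddim V_{i-1}) = \ddim V = \dd$, so it indeed lies in $\rep(Q,\dd)$.

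Finally I would verify $\lambda(t) \in \GL(\dd)_\sigma$. One computes $\det \lambda(t)(x) = t^{\sum_{p=1}^n r_p(\dim V_p(x) - \dim V_{p-1}(x))}$, whence $\prod_{x \in Q_0}\det(\lambda(t)(x))^{\sigma(x)} = t^{E}$ with $E = \sum_{p=1}^n r_p\bigl(\sigma\cdot\ddim V_p - \sigma\cdot\ddim V_{p-1}\bigr)$. The hypothesis $\sigma\cdot\ddim V_i = 0$ for all $i$ kills every summand, so $E = 0$ and $\prod_{x}\det(\lambda(t)(x))^{\sigma(x)} = 1$ for all $t$, as required. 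The closing statement of the lemma then follows at once, since $\bigoplus_i V_i/V_{i-1}$ is a Euclidean limit of points in the $\GL(\dd)_\sigma$-orbit of $V$, hence lies in its closure.

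I do not expect a genuine obstacle here; this is the standard associated-graded degeneration argument, implicit in King's work. The only points that demand care are bookkeeping ones: fixing the convention so that a subrepresentation flag yields block upper (rather than lower) triangular maps, and consequently choosing the weights $r_p$ strictly decreasing (not increasing) so that the limit exists; and recognizing that the determinant exponent $E$ is a telescoping sum that collapses precisely because $\sigma\cdot\ddim V_i = 0$ at every step of the filtration.
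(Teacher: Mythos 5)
Your proof is correct and is exactly the standard associated-graded degeneration argument; the paper itself does not supply a proof of this lemma (it is labeled as folklore, implicit in King's work), so your write-up fills the gap. You have handled the two bookkeeping points that actually matter: the block upper-triangular structure of the structure maps relative to a basis adapted to the filtration forces the exponents $r_p$ to be chosen strictly decreasing for the $t\to 0$ limit to exist, and the exponent $E$ in $\prod_x \det(\lambda(t)(x))^{\sigma(x)} = t^E$ telescopes to $\sum_p r_p(\sigma\cdot\ddim V_p - \sigma\cdot\ddim V_{p-1})$, which vanishes term by term precisely because every $V_i$ lies in the hyperplane $\sigma\cdot\ddim V_i = 0$ (not merely the endpoints). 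The observation that the associated graded has dimension vector $\dd$, so the limit really stays in $\rep(Q,\dd)$, is also a necessary sanity check and you included it. No gaps.
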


In what follows, for a vector $\ff \in \RR^{Q_0}$, we denote by $\HH(\ff)$ the hyperplane in $\RR^{Q_0}$ orthogonal to $\ff$. We are now ready to prove one implication of Theorem \ref{quiver-RadIso-thm}.

\begin{lemma}\label{lemma-2-app} For a weight $\sigma \in \ZZ^{Q_0}$, if $V \in \rep(Q, \dd)$ is a $\sigma$-polystable representation then $\sigma \in \rel(\Omega(V))$.
\end{lemma}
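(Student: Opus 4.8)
The plan is to rely on the standard description of the relative interior of a polyhedral cone together with the semisimplicity packaged in the word ``polystable''. Recall that $\Omega(V)$ is the cone cut out by the single linear equation $\tau\cdot\ddim V=0$ and the inequalities $\tau\cdot\ddim W\le 0$ as $W$ ranges over the (finitely many dimension vectors of) subrepresentations of $V$; consequently a point $\tau_0\in\Omega(V)$ lies in $\rel(\Omega(V))$ if and only if every one of these inequalities that is an equality at $\tau_0$ is in fact an equality on all of $\Omega(V)$. So, writing $V=\bigoplus_{k=1}^{s}V_k$ with each $V_k$ being $\sigma$-stable, it suffices to verify: (i) $\sigma\in\Omega(V)$; and (ii) for every subrepresentation $W\subseteq V$ with $\sigma\cdot\ddim W=0$ one has $\tau\cdot\ddim W=0$ for all $\tau\in\Omega(V)$.

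Step (i) is immediate: a direct sum of $\sigma$-semi-stable representations is $\sigma$-semi-stable, so $V$ is $\sigma$-semi-stable, which is precisely the assertion $\sigma\in\Omega(V)$.

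For step (ii), the key point is that such a $W$ must actually be a direct summand of $V$ as a representation of $Q$. Indeed, since $V$ is $\sigma$-semi-stable and $\sigma\cdot\ddim W=0$, every subrepresentation of $W$ is a subrepresentation of $V$ and hence has nonpositive $\sigma$-weight, so $W$ itself is $\sigma$-semi-stable; thus $W$ is a subobject of $V$ in the finite-length abelian category $\rep(Q)^{ss}_{\sigma}$ of $\sigma$-semi-stable representations (with $\sigma\cdot\ddim(-)=0$), whose simple objects are exactly the $\sigma$-stable representations. As $V$ is $\sigma$-polystable, it is a semisimple object of this category, and a subobject of a semisimple object splits off as a direct summand (take a subobject of $V$ maximal among those meeting $W$ trivially). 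Hence there is a subrepresentation $W^c\subseteq V$ with $V=W\oplus W^c$, and then for every $\tau\in\Omega(V)$ we get $\tau\cdot\ddim W\le 0$, $\tau\cdot\ddim W^c\le 0$, and $(\tau\cdot\ddim W)+(\tau\cdot\ddim W^c)=\tau\cdot\ddim V=0$, which forces $\tau\cdot\ddim W=0$. This proves (ii), and together with (i) it gives $\sigma\in\rel(\Omega(V))$.

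The routine linear-algebra input (the description of $\rel$ of a cone, and the $\sigma$-semi-stability of $W$) is harmless; the one step that genuinely uses the hypothesis and must be handled with care is the splitting of the subrepresentation $W$ off of $V$, i.e. the fact that a subobject of a semisimple object of $\rep(Q)^{ss}_{\sigma}$ is a direct summand --- this is exactly where ``$\sigma$-polystable'' rather than merely ``$\sigma$-semi-stable'' is used.
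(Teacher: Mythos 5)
Your proof is correct, and it proves the same key fact as the paper's proof but by a genuinely different mechanism. Both arguments reduce the lemma to showing that every subrepresentation $W \subseteq V$ with $\sigma \cdot \ddim W = 0$ is a direct summand of $V$, so that $V/W$ is also (isomorphic to) a subrepresentation and hence $\tau \cdot \ddim W \leq 0$, $\tau \cdot \ddim(V/W) \leq 0$, and $\tau \cdot \ddim V = 0$ force $\tau \cdot \ddim W = 0$ for every $\tau \in \Omega(V)$. The paper obtains the splitting geometrically: it applies Lemma \ref{lemma-1-app} to the two-step filtration $0 \subseteq W \subseteq V$ to place $W \oplus V/W$ in $\overline{\GL(\dd)_\sigma V}$, and then invokes the closed-orbit characterization of polystability from Proposition \ref{poly-stable-prop-app} to conclude $W \oplus V/W \simeq V$. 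You instead argue inside the abelian length category $\rep(Q)^{ss}_\sigma$ of $\sigma$-semi-stable representations (of $\sigma$-degree zero): you observe that $W$ and $V/W$ are themselves $\sigma$-semi-stable, that $V$ is a semisimple object of this category because it is $\sigma$-polystable, and that a subobject of a semisimple object in a finite-length abelian category splits off. This categorical route is cleaner and more self-contained --- it avoids the one-parameter-subgroup degeneration and the geometric form of polystability altogether, using only the standard structure of the category of semi-stable representations (abelian, finite length, simples are the stable objects), which the paper itself relies on elsewhere, e.g.\ in the proof of Theorem \ref{RIF-thm-1}(2). What the paper's GIT route buys is consistency with the rest of its toolkit, since Proposition \ref{poly-stable-prop-app} is needed anyway for Theorem \ref{quiver-RadIso-thm}; but as a proof of this lemma in isolation, your argument is at least as good.
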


\begin{proof} Let $\F$ be the face of $\Omega(V)$ that contains $\sigma$ in its relative interior. Let us write
\begin{equation}\label{eqn-1-proof-app}
\mathcal F=\Omega(V) \cap (\cap~ \HH(\ddim 
V')),
\end{equation}
where the intersection is over (finitely many) subrepresentations $V'$ of $V$. For each such subrepresentation $V' \subseteq V$, applying Lemma \ref{lemma-1-app} to the filtration $0\subseteq V' \subseteq V$, we get that $V' \oplus V/V' \in \overline{\GL(\dd)_{\sigma}V}$. Moreover, as $V$ is assumed to be $\sigma$-polystable, Proposition \ref{poly-stable-prop-app} tells us that $\GL(\dd)_{\sigma}V$ is closed in $\rep(Q, \dd)$, and so we obtain that $V'\oplus V/V' \simeq V$. This immediately implies that $\Omega(V) \subseteq \mathbb{H}(\ddim V')$ for every subrepresentation $V' \subseteq V$ that occurs in $(\ref{eqn-1-proof-app})$, and thus $\mathcal F=\Omega(V)$. This proves that $\sigma \in \rel(\Omega(V))$.
\end{proof}

In what follows, we denote by $W_{\CC}$ the complexification of a representation $W$ of $Q$. We recall that the orbit cone of a complex representation $X$ of $Q$ is the rational convex polyhedral cone consisting of all real weights $\theta \in \RR^{Q_0}$ such that $X$ is a $\theta$-semi-stable complex representation. Let $G:=\prod_{x \in Q_0}\GL(\dd(x), \CC)$ and $\mathbb{X}:=\prod_{a \in Q_1}\CC^{\dd(ha)\times \dd(ta)}$ on which $G$ acts by simultaneous conjugation. Note that $G_{\RR}$, the $\RR$-rational points of $G$, is $\GL(\dd)$, and $\mathbb{X}_{\RR}=\rep(Q, \dd)$. The following result, mostly proved in \cite[Proposition 2.4 and Remark 2.5]{HosSch2017}, allows us to transfer invariant-theoretic information between $\RR$ and $\CC$. This comes in handy since the invariant theory over $\CC$ is easier than that over $\RR$.

\begin{prop} \label{reals-complex-prop-app} Let $V \in \rep(Q,\dd)$ be a representation,
\begin{equation} \label{filtration-eqn-app}
0=V_0\subseteq V_1 \subseteq \ldots \subseteq V_n=V
\end{equation}
a filtration of $V$, and $\sigma \in \ZZ^{Q_0}$ a weight. Then the following statements hold.
\begin{enumerate}
\item For an integral weight $\theta \in \ZZ^{Q_0}$, $V$ is $\theta$-semi-stable if and only if $V_{\CC}$ is $\theta$-semi-stable. Consequently, 
$$\Omega(V)=\Omega(V_{\CC}).$$

\item If $V_{\CC}$ is $\sigma$-stable then $V$ is also $\sigma$-stable.

\item If $V$ is $\sigma$-stable then $V_{\CC}$ is either stable or a direct sum of two $\sigma$-stable (complex) representations. Consequently, if $V$ is $\sigma$-polystable then $V_{\CC}$ is also $\sigma$-polystable.
\item Assume that the factors $V_i/V_{i-1}$, $i \in \{1, \ldots, n\}$, are $\sigma$-stable and let $\gr_{\sigma}(V)=\bigoplus_{i=1}^n V_i/V_{i-1}$ be the associated graded module corresponding to the above filtration of $V$. Let $\gr_{\sigma}(V_{\CC}) \in \mathbb{X}$ be the associated graded module corresponding to a Jordan-H{\"o}lder filtration of $V_{\CC}$ in the category of $\sigma$-semi-stable complex representations of $Q$. Then
$$
\gr_{\sigma}(V)_{\CC}\simeq \gr_{\sigma}(V_{\CC}),
$$
and 
$$
\Omega(\gr_{\sigma}(V))=\Omega(V) \cap \left(\cap_{i=1}^n \HH(\ddim V_i) \right)
$$
is a face of $\Omega(V)$ containing $\sigma$.
\end{enumerate}
\end{prop}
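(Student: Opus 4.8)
The plan is to establish (1)--(3) and the isomorphism in (4) by descent along complex conjugation, and to read off the face statement in (4) from Lemmas~\ref{lemma-1-app} and~\ref{lemma-2-app} together with the fact (see \cite{CC5}) that the faces of $\Omega(V)$ are exactly the orbit cones $\Omega(W)$ with $W\in\overline{\GL(\dd)V}$. The key structural remark is that, since every $V(a)$ is real, coordinatewise conjugation $c$ on $V_\CC=V\otimes_\RR\CC$ is an $\RR$-linear automorphism of the underlying $Q$-representation; hence for a complex subrepresentation $U\subseteq V_\CC$ its conjugate $\overline U:=c(U)$ is again a subrepresentation with $\ddim\overline U=\ddim U$, and a complex subrepresentation is $c$-stable precisely when it equals $V'\otimes_\RR\CC$ for a (unique) real subrepresentation $V'\subseteq V$.

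For (1) and (2), the implications ``$V_\CC$ (semi-)stable $\Rightarrow V$ (semi-)stable'' are immediate on complexifying real subrepresentations. For the converse in (1), assume (harmlessly) $\theta\cdot\ddim V=0$; if $V$ is $\theta$-semi-stable but $V_\CC$ is not, pick $U\subseteq V_\CC$ attaining $\mu:=\max\{\theta\cdot\ddim U':U'\subseteq V_\CC\}>0$. Then $\theta\cdot\ddim\overline U=\mu$, and from $\theta\cdot\ddim(U+\overline U)+\theta\cdot\ddim(U\cap\overline U)=2\mu$ with $\theta\cdot\ddim(U\cap\overline U)\le\mu$ we get $\theta\cdot\ddim(U+\overline U)=\mu>0$; but $U+\overline U$ is $c$-stable, hence descends to a real subrepresentation of $V$ of positive $\theta$-weight, a contradiction. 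Applying this to integral weights gives $\Omega(V)=\Omega(V_\CC)$. For (3), if $V$ is $\sigma$-stable but $V_\CC$ is not, then $V_\CC$ is $\sigma$-semi-stable by (1), hence contains a $\sigma$-stable subrepresentation $U$ with $\sigma\cdot\ddim U=0$ and $0\ne U\subsetneq V_\CC$; now $U\cap\overline U$ and $U+\overline U$ are $c$-stable and therefore descend, and combining $\sigma$-semi-stability of $V_\CC$ with $\sigma$-stability of $V$ forces $U+\overline U=V_\CC$ and then $U\cap\overline U=0$, i.e. $V_\CC=U\oplus\overline U$ with both summands $\sigma$-stable. Complexifying a $\sigma$-stable decomposition of $V$ termwise then gives the ``consequently''.

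For (4), the hypothesis that the $V_i/V_{i-1}$ are $\sigma$-stable gives $\sigma\cdot\ddim V_i=0$ for all $i$, so $V$, and hence $V_\CC$ by (1), is $\sigma$-semi-stable. The complexified filtration $(V_i)_\CC$ of $V_\CC$ has factors $(V_i/V_{i-1})_\CC$, each a direct sum of one or two $\sigma$-stable complex representations by (3); refining it inside each factor yields a Jordan--H\"older filtration of $V_\CC$ in the category of $\sigma$-semi-stable complex representations, whose $\sigma$-stable subquotients are those of the $(V_i/V_{i-1})_\CC$. By uniqueness of Jordan--H\"older factors, $\gr_\sigma(V_\CC)\simeq\bigoplus_i(V_i/V_{i-1})_\CC=\gr_\sigma(V)_\CC$. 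For the face statement, $\gr_\sigma(V)$ is $\sigma$-polystable, so $\sigma\in\rel(\Omega(\gr_\sigma(V)))$ by Lemma~\ref{lemma-2-app}; and by Lemma~\ref{lemma-1-app} applied to $V_\bullet$ (legitimate as $\sigma\cdot\ddim V_i=0$) we have $\gr_\sigma(V)\in\overline{\GL(\dd)_\sigma V}\subseteq\overline{\GL(\dd)V}$, so $\Omega(\gr_\sigma(V))$ is a face of $\Omega(V)$. Since a polyhedral cone is the disjoint union of the relative interiors of its faces, $\Omega(\gr_\sigma(V))$ is the unique face of $\Omega(V)$ containing $\sigma$ in its relative interior, namely $\Omega(V)\cap\bigcap\{\HH(\ddim V'):V'\subseteq V,\ \sigma\cdot\ddim V'=0\}$; and this equals $\Omega(V)\cap\bigcap_{i=1}^n\HH(\ddim V_i)$ because for any $V'$ with $\sigma\cdot\ddim V'=0$ the induced filtration $V'\cap V_\bullet$ has subquotients embedding into the simple $V_i/V_{i-1}$, hence equal to $0$ or $V_i/V_{i-1}$, so that $\ddim V'=\sum_{i\in T}(\ddim V_i-\ddim V_{i-1})$ for some $T\subseteq[n]$, whence any $\theta$ orthogonal to all $\ddim V_i$ is orthogonal to $\ddim V'$.

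I expect the main difficulty to be bookkeeping rather than any conceptual hurdle: making the conjugation/descent arguments in (1) and (3) airtight (that $c$-stable subrepresentations descend, and that the chosen extremal $U$ interacts correctly with $U\cap\overline U$ and $U+\overline U$), and ensuring in (4) that refining the complexified filtration really produces a Jordan--H\"older filtration in the $\sigma$-semi-stable category --- which is exactly the point where (3), that the complexified $\sigma$-stable factors stay semisimple there, is indispensable.
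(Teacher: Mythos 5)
Your proposal is correct in outline and in fact proves more than the paper does: the paper disposes of parts (1)--(3) by citing Hoskins--Schaffhauser \cite{HosSch2017}, whereas you supply direct conjugation/descent arguments for all three, and these are sound (the extremal-subrepresentation trick in (1) and the $U\cap\overline U$, $U+\overline U$ dichotomy in (3) are exactly the right moves). Your proof of (4) also takes a genuinely different route from the paper's on both of its halves. For the isomorphism $\gr_\sigma(V)_\CC\simeq\gr_\sigma(V_\CC)$, the paper works on the GIT side: both are $\sigma$-polystable representations lying in $\overline{G\cdot V_\CC}$, and King's theorem that there is a unique closed $G$-orbit in $\mathbb{X}^{ss}_\sigma$ inside that closure forces the isomorphism; you instead refine the complexified filtration to a Jordan--H\"older filtration in the category of $\sigma$-semistable complex representations and invoke uniqueness of Jordan--H\"older factors. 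These are essentially dual formulations of the same fact, and both are valid. For the face identity, the paper verifies the two inclusions $\Omega(\gr_\sigma(V))\subseteq\Omega(V)\cap\bigcap_i\HH(\ddim V_i)$ and $\supseteq$ directly by elementary semistability manipulations; you identify $\Omega(\gr_\sigma(V))$ as \emph{the} face of $\Omega(V)$ whose relative interior contains $\sigma$ (via Lemmas \ref{lemma-1-app}, \ref{lemma-2-app} and \cite{CC5}) and then match that face against $\Omega(V)\cap\bigcap_i\HH(\ddim V_i)$ by a dimension-vector computation. Your route is more conceptual but imports one nontrivial subclaim that the paper's double-inclusion argument sidesteps.

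That subclaim is the one place you are too quick. You assert that each subquotient $(V'\cap V_i)/(V'\cap V_{i-1})$ "embeds into the simple $V_i/V_{i-1}$, hence equals $0$ or $V_i/V_{i-1}$." But $V_i/V_{i-1}$ is $\sigma$-stable, not simple as a representation of $Q$, so a priori it has many nonzero proper subrepresentations. What saves the argument is a weight count: a proper nonzero subrepresentation of a $\sigma$-stable has strictly negative $\sigma$-weight, so each increment $\sigma\cdot\ddim\bigl((V'\cap V_i)/(V'\cap V_{i-1})\bigr)$ is $\le 0$; since these increments telescope to $\sigma\cdot\ddim V'-\sigma\cdot\ddim(V'\cap V_0)=0$, every increment must vanish, and only then does $\sigma$-stability of $V_i/V_{i-1}$ force each subquotient to be $0$ or all of $V_i/V_{i-1}$. (Equivalently one can argue that each $V'\cap V_i$ has $\sigma$-weight $0$ and hence lies in the abelian subcategory of $\sigma$-semistables of weight zero, where the $V_i/V_{i-1}$ genuinely are simple; but that reduction is itself what the zero-weight argument establishes.) With this paragraph filled in, your proof of (4) is complete and correct.
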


\begin{proof} Parts $(1)-(3)$ are proved in  \cite[Proposition 2.4 and Remark 2.5]{HosSch2017}. For part $(4)$, complexifying the filtration $(\ref{filtration-eqn-app})$ we obtain a filtration of complex subrepresentations of $V_{\CC}$ whose factors $(V_{i})_{\CC}/(V_{i-1})_{\CC}=(V_i/V_{i-1})_{\CC}$, $i \in [n]$, are $\sigma$-polystable by part $(3)$. Thus $\gr_{\sigma}(V)_{\CC}$ is a $\sigma$-polystable representation that belongs to the closure of $G V_{\CC}$. Let us denote by $\mathbb{X}^{ss}_{\sigma}$ the $\sigma$-semi-stable locus in $\mathbb{X}$. Then it follows from King's work \cite{K} that the $G$-orbit of $\gr_{\sigma}(V)_{\CC}$ is the unique closed $G$-orbit in $\mathbb{X}^{ss}_{\sigma}$ lying in the closure of the $G$-orbit of $V_{\CC}$ in $\mathbb{X}^{ss}_{\sigma}$.  Since the same holds for the $G$-orbit of $\gr_{\sigma}(V_{\CC})$, we get that $\gr_{\sigma}(V)_{\CC}$ and $\gr_{\sigma}(V_{\CC})$ are isomorphic as complex representations of $Q$.

It remains to show that $\Omega(\gr_{\sigma}(V))$ is a face of $\Omega(V)$. Let $\theta \in \RR^{Q_0}$ be a real weight of $Q$. If $\theta \in \Omega(\gr_{\sigma}(V))$ then the direct summands $V_i/V_{i-1}$, $i \in [n]$, are also $\theta$-semi-stable which immediately implies that $\theta \in \HH(\ddim V_i)$ for all $i \in [n]$. Moreover, since the category of semi-stable representations is closed under extensions, we also get that $V_n=V$ is $\theta$-semi-stable. This proves the inclusion $\Omega(\gr_{\sigma}(V)) \subseteq \Omega(V) \cap \left(\cap_{i=1}^n \HH(\ddim V_i) \right)$. For the other inclusion, let $\theta \in \RR^{Q_0}$ be a real weight such that $V$ is $\theta$-semi-stable and $\theta \cdot \ddim V_i=0$ for all $i \in [n]$. Then each $V_i$ is $\theta$-semi-stable, implying that every quotient $V_i/V_{i-1}$ is $\theta$-semi-stable. Thus the direct sum $\bigoplus_{i=1}^n V_i/V_{i-1}$ is $\theta$-semi-stable and so $\theta$ belongs to $\Omega(\gr_{\sigma}(V))$. This now finishes the proof.
\end{proof}

\begin{remark} \label{rmk-iso-cplex-real} The category of representations of $Q$ can be identified with the category, $\module(A)$, of finite-dimensional left modules of the path algebra $A=\RR Q$ of $Q$ over $\RR$. If $M$ is a representation of $Q$ viewed as a left $A$-module then $M_{\CC}=M \otimes_{\RR} \CC$, the complexification of $M$, is a finite-dimensional module of $A_{\CC}=A \otimes_{\RR} \CC=\CC Q$, the path algebra of $Q$ over $\CC$. Moreover, $M_{\CC}=M \oplus iM\simeq M^{\oplus 2}$ as $A$-modules.

Now, let $M$ and $N$ be two finite-dimensional $A$-modules such that $M_{\CC} \simeq N_{\CC}$ as $A_{\CC}$-modules. In particular, they are isomoprhic as $A$-modules and thus
$$
M^{\oplus 2} \simeq N^{\oplus 2} \text{~as~} A\text{-modules}.
$$
Since $\module(A)$ has the Krull-Schmidt property, we get that $M \simeq N$ as $A$-modules, equivalently as representations of $Q$. For completeness, we mention that the converse of what we have just proved always hold, i.e. if $M$ and $N$ are isomorphic $A$-modules then $M_{\CC}$ and $N_{\CC}$ are clearly isomorphic as $A_{\CC}$-modules. 
\end{remark}

We are now ready to prove Theorem \ref{quiver-RadIso-thm}.

\begin{proof}[Proof of Theorem \ref{quiver-RadIso-thm}] We will show that 
$$\sigma \in \rel(\Omega(V)) \Longleftrightarrow V \text{~is~}\sigma\text{-polystable}.
$$  
This together with Proposition \ref{poly-stable-prop-app} will prove the claim of the theorem.

\bigskip
\noindent
$(\Longleftarrow)$ This implication is proved in Lemma \ref{lemma-2-app}.

\bigskip
\noindent
$(\Longrightarrow)$ We know from Proposition \ref{reals-complex-prop-app} that $\Omega(\gr_{\sigma}(V_{\CC}))$ is a face of $\Omega(V_{\CC})$ containing $\sigma$, a point in the relative interior of $\Omega(V_{\CC})$. Therefore,
$$
\Omega(\gr_{\sigma}(V_{\CC}))=\Omega(V_{\CC}).
$$

Since $V$ is assumed to be locally semi-simple, $V$ is $\sigma_0$-polystable for some weight $\sigma_0 \in \ZZ^{Q_0}$. By Proposition \ref{reals-complex-prop-app}{(3)}, this implies that $V_{\CC}$ is $\sigma_0$-polystable; in particular, $\sigma_0 \in \Omega(V_{\CC})=\Omega(\gr_{\sigma}(V_{\CC}))$, meaning that 
$$
\gr_{\sigma}(V_{\CC}) \in \mathbb{X}^{ss}_0,
$$ 
where $\mathbb{X}^{ss}_0$ is the $\sigma_0$-semi-stable locus in $\mathbb{X}$. Furthermore, according to King's work in \cite{K} (see also \cite[Appendix A]{DerMak-MLE-2020})), we know that $\V_{\CC}$ being $\sigma_0$-polystable is equivalent to the $G$-orbit of $V_{\CC}$ being closed in $\mathbb{X}^{ss}_0$, and so 
$$
\overline{GV_{\CC}} \cap \mathbb{X}^{ss}_0=GV_{\CC}.
$$

Next, since $\gr_{\sigma}(V_{\CC})$ is the associated graded module of a filtration of $V_{\CC}$, it can be viewed as the limit $\lim_{t \to 0} \lambda(t)V_{\CC}$ for a suitable $1$-parameter subgroup of $G$, and so $\gr_{\sigma}(V_{\CC}) \in \overline{G V_{\CC}}$. Putting everything together, we get that
$$
\gr_{\sigma}(V_{\CC}) \in \overline{GV_{\CC}} \cap \mathbb{X}^{ss}_0=GV_{\CC}.
$$
It now follows from Proposition \ref{reals-complex-prop-app}{(4)} that the complexifications of $\gr_{\sigma}(V)$ and $V$ are isomorphic. This further implies that $\gr_{\sigma}(V) \simeq V$ as real quiver representations by Remark \ref{rmk-iso-cplex-real}, and therefore $V$ must be $\sigma$-polystable.
\end{proof} 

\begin{rmk} 
In a sequel to the current work, we plan to address the algorithmic aspects of Theorem  \ref{RIF-thm-2} by using gradient descent to find and analyze algorithms that compute an approximation to a minimizing vector $t^{*}$ for our function $\Phi_{\F}(t)-\langle t, \cc \rangle, t \in \RR^n$. This combined with Theorem \ref{RIF-thm-2} will allow us to approximate the transformation $Q^{-{1 \over 2}}(t^*)$ to any desired level of accuracy. We point out that this task for the classical case (i.e. when $d_1=\ldots=d_n=1$) has been successfully carried out in \cite[Section 4]{Art-AviKapHai-2020}.  
\end{rmk}

\subsection*{Acknowledgment} The authors would like to thank Peter Casazza and Petros Valettas for many useful discussions on the paper. We are indebted to an anonymous referee for a very thorough report which helped improved the paper and for pointing out an error in an earlier version of inequality (\ref{maj-ineq}).

C. Chindris is supported by Simons Foundation grant $\# 711639$.


\begin{thebibliography}{GGOW18}\label{biblio-sec}

\bibitem[ACM12]{AleCahMix-2012}
B.~Alexeev, J.~Cahill, and D.~G. Mixon, \emph{Full spark frames}, J. Fourier
  Anal. Appl. \textbf{18} (2012), no.~6, 1167--1194. \MR{3000979}

\bibitem[AKS20]{Art-AviKapHai-2020}
S.~{Artstein-Avidan}, H.~{Kaplan}, and M.~{Sharir}, \emph{{On Radial Isotropic
  Position: Theory and Algorithms}}, arXiv e-prints (2020), arXiv:2005.04918.

\bibitem[Bar98]{Bar-1998}
F.~Barthe, \emph{On a reverse form of the {B}rascamp-{L}ieb inequality},
  Invent. Math. \textbf{134} (1998), no.~2, 335--361. \MR{1650312}

\bibitem[BC10]{BodCas-2010}
B.~G. Bodmann and P.~G. Casazza, \emph{The road to equal-norm {P}arseval
  frames}, J. Funct. Anal. \textbf{258} (2010), no.~2, 397--420. \MR{2557942}

\bibitem[BCCT08]{BenCarChrTao-2008}
J.~Bennett, A.~Carbery, M.~Christ, and T.~Tao, \emph{The {B}rascamp-{L}ieb
  inequalities: finiteness, structure and extremals}, Geom. Funct. Anal.
  \textbf{17} (2008), no.~5, 1343--1415. \MR{2377493}

\bibitem[BM91]{BjoMan-1991}
P.~E. Bj\o{}rstad and J.~Mandel, \emph{On the spectra of sums of orthogonal
  projections with applications to parallel computing}, BIT \textbf{31} (1991),
  no.~1, 76–88.

\bibitem[Bod07]{Bod-2007}
B.~G. Bodmann, \emph{Optimal linear transmission by loss-insensitive packet
  encoding}, Applied and Computational Harmonic Analysis \textbf{22} (2007),
  no.~3, 274 -- 285.

\bibitem[Cas13]{Cas-2013}
P.~G. Casazza, \emph{The {K}adison-{S}inger and {P}aulsen problems in finite
  frame theory}, Finite frames, Appl. Numer. Harmon. Anal.,
  Birkh\"{a}user/Springer, New York, 2013, pp.~381--413. \MR{2964016}

\bibitem[CC13]{CahCas-2013}
J.~Cahill and P.~G. Casazza, \emph{The {P}aulsen problem in operator theory},
  Oper. Matrices \textbf{7} (2013), no.~1, 117--130. \MR{3076462}

\bibitem[CD21]{ChiDer-2019}
C.~Chindris and H.~Derksen, \emph{{The Capacity of Quiver Representations and
  Brascamp–Lieb Constants}}, International Mathematics Research Notices
  (2021), rnab064.

\bibitem[CFM12]{CasFicMix-2012}
P.~G. Casazza, M.~Fickus, and D.~G. Mixon, \emph{Auto-tuning unit norm frames},
  Appl. Comput. Harmon. Anal. \textbf{32} (2012), no.~1, 1--15. \MR{2854158}

\bibitem[Chi08]{CC5}
C.~Chindris, \emph{On \textrm{GIT}-fans for quivers}, Preprint avilable at
  ar\textrm{X}iv:0805.1440v1 [math.RT], 2008.

\bibitem[DM20]{DerMak-MLE-2020}
H.~{Derksen} and V.~{Makam}, \emph{{Maximum likelihood estimation for matrix
  normal models via quiver representations}}, arXiv e-prints (2020),
  arXiv:2007.10206.

\bibitem[DSW17]{DviSarWig-2017}
Z.~Dvir, S.~Saraf, and A.~Wigderson, \emph{Superquadratic lower bound for
  3-query locally correctable codes over the reals}, Theory of Computing
  \textbf{13} (2017), no.~11, 1--36.

\bibitem[For02]{For-2002}
J.~Forster, \emph{A linear lower bound on the unbounded error probabilistic
  communication complexity}, vol.~65, 2002, Special issue on complexity, 2001
  (Chicago, IL), pp.~612--625. \MR{1964645}

\bibitem[GGOW18]{GarGurOliWig-2017}
A.~Garg, L.~Gurvits, R.~Oliveira, and A.~Wigderson, \emph{Algorithmic and
  optimization aspects of {B}rascamp-{L}ieb inequalities, via operator
  scaling}, Geom. Funct. Anal. \textbf{28} (2018), no.~1, 100--145.
  \MR{3777414}

\bibitem[HM18]{HamMoi-2018}
L.~{Hamilton} and A.~{Moitra}, \emph{{The Paulsen Problem Made Simple}}, arXiv
  e-prints (2018), arXiv:1809.04726.

\bibitem[HS17]{HosSch2017}
V.~{Hoskins} and F.~{Schaffhauser}, \emph{{Rational points of quiver moduli
  spaces}}, ArXiv e-prints (2017).

\bibitem[Joh48]{John-1948}
F.~John, \emph{Extremum problems with inequalities as subsidiary conditions},
  Studies and {E}ssays {P}resented to {R}. {C}ourant on his 60th {B}irthday,
  {J}anuary 8, 1948, Interscience Publishers, Inc., New York, N. Y., 1948,
  pp.~187--204. \MR{0030135}

\bibitem[Kin94]{K}
A.D. King, \emph{Moduli of representations of finite-dimensional algebras},
  Quart. J. Math. Oxford Ser.(2) \textbf{45} (1994), no.~180, 515--530.

\bibitem[KLLR18]{KwoLauLeeRam2018}
T.~. Kwok, L.~C. Lau, Y.~T. Lee, and A.~Ramachandran, \emph{The {P}aulsen
  problem, continuous operator scaling, and smoothed analysis},
  S{TOC}'18---{P}roceedings of the 50th {A}nnual {ACM} {SIGACT} {S}ymposium on
  {T}heory of {C}omputing, ACM, New York, 2018, pp.~182--189. \MR{3826245}

\bibitem[KLM18]{KanLovMor-2018}
D.~M. Kane, S.~Lovett, and S.~Moran, \emph{{Generalized Comparison Trees for
  Point-Location Problems}}, 45th International Colloquium on Automata,
  Languages, and Programming (ICALP 2018) (Dagstuhl, Germany) (Ioannis
  Chatzigiannakis, Christos Kaklamanis, D{\'a}niel Marx, and Donald Sannella,
  eds.), Leibniz International Proceedings in Informatics (LIPIcs), vol. 107,
  Schloss Dagstuhl--Leibniz-Zentrum fuer Informatik, 2018, pp.~82:1--82:13.

\bibitem[KPCL09]{KutPezCalLiu-2009}
G.~Kutyniok, A.~Pezeshki, R.~Calderbank, and T.~Liu, \emph{Robust dimension
  reduction, fusion frames, and {G}rassmannian packings}, Applied and
  Computational Harmonic Analysis \textbf{26} (2009), no.~1, 64 -- 76.

\bibitem[PKC08]{PezKutCal-2008}
A.~{Pezeshki}, G.~{Kutyniok}, and R.~{Calderbank}, \emph{Fusion frames and
  robust dimension reduction}, 2008 42nd Annual Conference on Information
  Sciences and Systems, 2008, pp.~264--268.

\end{thebibliography}

\end{document}